\newcommand{\tq}{\, : \,} 
\newcommand{\NN}{\ensuremath{\mathbb{N}}}
\newcommand{\RR}{\ensuremath{\mathbb{R}}}
\newcommand{\BB}{\ensuremath{\mathcal{B}}}
\newcommand{\EE}{\ensuremath{\mathcal{E}}}
\newcommand{\ee}{\ensuremath{\mathbf{e}}}
\newcommand{\xx}{\ensuremath{\mathbf{x}}}
\newcommand{\yy}{\ensuremath{\mathbf{y}}}
\newcommand{\zz}{\ensuremath{\mathbf{z}}}
\newcommand{\XX}{\ensuremath{\mathbb{X}}}
\newcommand{\YY}{\ensuremath{\mathbb{Y}}}
\newcommand{\ww}{\ensuremath{\textit{\textit{w}}}}
\newcommand{\vv}{\ensuremath{\textit{\textit{v}}}}
\newcommand{\sss}{\ensuremath{\textit{s}}}
\newcommand{\Id}{\ensuremath{\mathrm{Id}}}
\newcommand{\supp}{\mathop{\mathrm{supp}}\nolimits}
  \newcommand{\wMO}{\ell}
   \newcommand{\MO}{\mathit{h}}
\newtheorem{theorem}{Theorem}[section]
\newtheorem{proposition}[theorem]{Proposition}
\newtheorem{corollary}[theorem]{Corollary}
\newtheorem{lemma}[theorem]{Lemma}
\newtheorem{problem}[theorem]{Problem}
\theoremstyle{definition}
 \newtheorem{definition}[theorem]{Definition}
\theoremstyle{remark}
\newtheorem{remark}[theorem]{Remark}
 \numberwithin{equation}{section}
\begin{document}

\title[Banach spaces with a unique greedy basis]{Existence and uniqueness of greedy bases in Banach spaces }

\thanks{}

\author[F. Albiac]{F. Albiac}\address{Mathematics Department\\ Universidad P\'ublica de Navarra\\
 Pamplona 31006\\  Spain} \email{fernando.albiac@unavarra.es}
 \author[J. L. Ansorena]{J. L. Ansorena}\address{Department of Mathematics and Computer Sciences\\
Universidad de La Rioja\\
 Logro\~no 26004\\
Spain} \email{joseluis.ansorena@unirioja.es}
 
\author[S. J. Dilworth]{S. J. Dilworth}\address{Department of Mathematics\\ University of South Carolina \\ Columbia
SC 29208 \\ USA} \email{dilworth@math.sc.edu}
 
 \author[ Denka Kutzarova]{Denka Kutzarova}
 \address{Institute of Mathematics Bulgarian Academy of Sciences\\
  Sofia\\
 Bulgaria\\  
 \textit{currentadress}
  \\  
 Department of Mathematics University of Illinois at Urbana-Champaign  \\ 
 Urbana, IL 61801
 \\
USA
  } \email{denka@math.uiuc.edu}

\keywords{greedy basis, unconditional basis, symmetric basis, Marcinkiewicz spaces,  Nakano spaces, Orlicz sequence spaces, uniqueness greedy (respectively, unconditional or symmetric) basis}

\subjclass[2000]{46B15,46B45}

\begin{abstract} Our aim is to investigate the properties of existence and uniqueness of greedy bases in  Banach spaces.  We show the non-existence of greedy basis in some Nakano spaces and Orlicz sequence spaces and produce the first-known examples of non-trivial spaces (i.e., different from $c_{0}$, $\ell_{1}$, and $\ell_{2}$)     with a unique greedy basis. 
\end{abstract}

\maketitle

\section{Introduction and background}\label{introd} 
\noindent  Let $\XX$ be a separable (real) Banach space. One of the most important problems in the isomorphic theory dating back to Banach's school  is the study of  the existence and uniqueness of  Schauder bases for $\XX$. The question
of uniqueness is formulated in a meaningful way  through the notion of equivalence of bases. Recall that two normalized (or semi-normalized) bases $(\ee_n)_{n=1}^{\infty}$ and $(\xx_n)_{n=1}^{\infty}$ of $\XX$ are called \textit{equivalent} provided a series $\sum_{n=1}^{\infty}a_{n}\ee_{n}$ converges if and only if $\sum_{n=1}^{\infty}a_{n}\xx_{n}$ converges. This is the case if and only if the map
\[
 \sum_{n=1}^{\infty}a_{n}\ee_{n}\to \sum_{n=1}^{\infty}a_{n}\xx_{n}
\]
defines an automorphism of $\XX$. That is, there exists a constant $C\ge 1$ such that
\[
\frac1C\left\Vert \sum_{n=1}^{N}a_{n}\ee_{n}\right\Vert\le \left\Vert \sum_{n=1}^{N}a_{n}\xx_{n}\right\Vert \le C\left\Vert \sum_{n=1}^{N}a_{n}\ee_{n}\right\Vert,
\]
for $a_{1},\dots,a_{N}\in \mathbb R$ and $N=1,2,\dots$.
As it happens, in every infinite-dimensional Banach space with a basis there are uncountably many non-equivalent normalized bases \cite{PelcynskiSinger1964}. Thus in order to get a more accurate  structural information on a given space using bases as a tool, one needs to restrict the discussion on their existence and uniqueness  to bases with certain special properties. 

The most useful and extensively studied  class of special bases is that of \textit{unconditional bases.}  A basis  $(\ee_n)_{n=1}^{\infty}$  of $X$ is  {\it unconditional} if $(\ee_\pi(n))_{n=1}^{\infty}$ is a basis of $X$ for any permutation $\pi$ of the indices. If a Banach space has a unique normalized unconditional basis it  has to be equivalent to all its permutations, i.e., it has to be {\it symmetric}.

For  a wide class of sequence spaces the canonical unit vector basis is the unique symmetric basis. This class contains all the Orlicz sequence spaces $\ell_{F}$ for which $\lim_{t\to 0} tF^{\prime}(t)/F(t)$ exists (\cite{LindTzaf1971a}), and also the Lorentz sequence spaces $d(w,p)$ where $p\ge 1$ and $w= (w_{n})_{n=1}^{\infty}\in c_{0}\setminus \ell_{1}$ is a nonincreasing sequence of positive numbers with $w_{1} =1$ (see \cite{ACL1973}). In particular   the $\ell_{p}$ spaces for $1\le p<\infty$ have a unique symmetric basis.
However, a complete classification of Banach spaces with a unique symmetric basis seems far from being achieved.

For a Banach space with a symmetric basis it is rather unusual to have a unique unconditional basis.  It is well-known that $\ell_{2}$ has a unique unconditional basis \cite{KotheToeplitz1934}, and a classic result of Lindenstrauss and Pe{\l }czy{\'n}ski \cite{LindenstraussPel1968} asserts that  $\ell_{1}$ and $c_{0}$ also have a unique unconditional basis.  Lindenstrauss and Zippin \cite{LindenstraussZippin1969} completed the picture by showing that those three spaces are the all and only Banach spaces in which all unconditional bases are equivalent.  

Once we have determined that a Banach space does not have a symmetric 
basis (a task that can be far from trivial) we must rethink the problem of uniqueness of unconditional basis. In fact,
an  unconditional non-symmetric  basis  admits a continuum of nonequivalent permutations (cf.\ \cite{Hennefeld1973}*{Theorem 2.1}).
Hence for Banach spaces without symmetric bases it is more natural to consider instead the question of uniqueness of unconditional bases up to (equivalence and) permutation, (UTAP) for short. Two unconditional bases $(\ee_n)_{n=1}^{\infty}$ and $(\xx_n)_{n=1}^{\infty}$ of a Banach space $X$ are said to be {\it permutatively equivalent} if there is a permutation $\pi$ of $\mathbb N$ so that $(\ee_\pi(n))_{n=1}^{\infty}$ and $(\xx_n)_{n=1}^{\infty}$ are equivalent. Then we say that $\XX$ has a (UTAP) unconditional basis $(\ee_n)_{n=1}^{\infty}$ if every unconditional basis in $\XX$ is permutatively equivalent to $(\ee_n)_{n=1}^{\infty}$. 

The first movers in this direction were Edelstein and   P. Wojtaszczyk, who   proved that finite direct sums of $c_0$, $\ell_1$ and $\ell_2$ have  a (UTAP)   unconditional basis \cite{EdelWoj1973}.
Bourgain et al.\ embarked on a comprehensive study aimed at classifying those Banach spaces with unique unconditional basis up to permutation,
 that culminated in 1985 with their {\it Memoir}  \cite{BCLT1985}. They showed that  the spaces $c_{0}(\ell_{1})$, $c_{0}(\ell_{2})$, $\ell_{1}(c_{0})$, $\ell_{1}(\ell_{2})$ and their complemented subspaces with unconditional basis all have  a (UTAP)  unconditional basis, while $\ell_{2}(\ell_{1})$ and $\ell_{2}(c_{0})$ do not. However, the hopes of attaining a satisfactory classification were shattered when they  found a nonclassical Banach space, namely the $2$-convexification  $\mathcal T^{(2)}$ of Tsirelson's space  having a (UTAP) unconditional basis. Their work also  left many open questions, most of which remain unsolved as of today. Using completely different techniques, Casazza and Kalton solved some of these problems more recently  in \cite{CasazzaKalton1998} by showing that the original Tsirelson space $\mathcal T$, and  certain complemented subspaces of Orlicz sequence spaces have a (UTAP) unconditional basis. They also found a space with a (UTAP) unconditional basis with  a complemented subspace failing to have  a (UTAP) unconditional basis.

At the end of the 20th century, Banach space theorists began to feel attracted to study a brand new type of bases  called greedy bases originating from nonlinear approximation and compressed sensing. Let us recall the definition.
For $N=1,2,\dots,$ we consider non-linear operators $G_{N}$ defined by
\[
x=\sum_{n=1}^{\infty}\ee_n^{\ast}(x)\ee_n\in X\mapsto G_N(x)=\sum_{n\in \Lambda_{N}(x)}\ee_n^{\ast}(x)\ee_n,
\]
where $\Lambda_{N}(x)$ is {\it any} $N$-element set of indices such that 
\[
\min\{|\ee_n^{\ast}(x)|: n \in \Lambda_{N}(x)\}\ge \max\{|\ee_n^{\ast}(x)|: n\not\in \Lambda_{N}(x)\}.
\]
The basis $(\ee_n)_{n=1}^{\infty}$ is said to be {\it greedy}  \cite{KoTe} if $G_N(x)$ is essentially the best $N$-term  approximation to $x$ using the basis vectors, i.e., there is a constant $C$ such that for all $x\in X$ and $N\in \mathbb N$, we have
\[
\Vert x-G_{N}(x)\Vert\le C\inf\left\{\left\Vert x-\sum_{n\in A}\alpha_n\ee_n\right\Vert : |A|=N, \alpha_n\in \RR  \right\}.
\]
Konyagin and Temlyakov  showed in \cite{KoTe} that greedy bases can be simply characterized as unconditional bases with the additional property of being {\it democratic}, i.e.,  for some constant  $\Delta>0$ we have
\[
\left\Vert \sum_{n\in A}\ee_n\right\Vert\le \Delta \left\Vert \sum_{n\in B}\ee_n\right\Vert,
\]
whenever $A$ and $B$ are finite subsets of integers of the same cardinality. Symmetric bases are trivially greedy but there exist greedy bases, such as the normalized Haar system in $L_{p}[0,1]$ for $1<p<\infty$, which are greedy but not symmetric. Thus being greedy is an intermediate property between being unconditional and being symmetric.

In this article we are concerned with investigating the novel property of  uniqueness of greedy bases  in Banach spaces in the expected sense. Since it is the first time this property is explicitly formulated in the literature we include its definition.

\begin{definition} Suppose $X$ is a Banach space with a semi-normalized greedy bases $(\ee_n)_{n=1}^{\infty}$. The space $X$ is said to have a unique greedy basis (respectively, up to permutation) if whenever  $(\xx_n)_{n=1}^{\infty}$ is another semi-normalized greedy basis of $X$, then $(\xx_n)_{n=1}^{\infty}$ is equivalent to   $(\ee_n)_{n=1}^{\infty}$ (respectively,  to a permutation of  $(\ee_n)_{n=1}^{\infty}$.
\end{definition}

Let us examine some possible scenarios. Firstly,  Banach spaces  that possess symmetric basis and have a unique unconditional basis (i.e., $\ell_{1}$, $\ell_{2}$, and $c_{0}$) also have a unique greedy basis, while if a Banach space with symmetric basis does not have a unique  symmetric basis it cannot have a unique greedy basis.
In between these two extremes we  come across  spaces like $\ell_p$ for $1<p<\infty$, $p\not=2$, with a unique symmetric basis and a wealth of  permutatively non-equivalent greedy bases (cf.\ \cite{DHK2006}).
The following question  naturally arises:
\begin{problem}\label{problemon} Are there any other Banach spaces aside from $c_{0}, \ell_{1},$ and $\ell_{2}$ with the property of uniqueness of greedy basis?
\end{problem}

Secondly, in a Banach space without a symmetric basis it may happen that  the natural basis of $X$ fails to be greedy. Such is the case in  Bessov-type spaces like $(\oplus_{n=1}^\infty \ell_q^n)_p$ and $\ell_{p}(\ell_{q})$ for $1\le p\not=q<\infty$. This makes relevant to investigate the question of existence of greedy bases.  Dilworth et al.\ \cite{DFOS2011} settled the problem of  existence of greedy bases in  $(\oplus_{n=1}^\infty \ell_q^n)_p$ by proving that these spaces have a greedy basis when 
$1<p<\infty$ and do not otherwise. In turn, Shechtmann \cite{Schectmann2014} showed that $\ell_{p}(\ell_q)$ fails to have a greedy basis in the nontrivial cases. As far as uniqueness is concerned, the right question to ask  in Banach spaces without a symmetric basis is the uniqueness of greedy basis up to a permutation.
For instance,  Smela  proved that the $L_{p}$-spaces for $1<p<\infty$, $p\not=2$, and $H_{1}$  have infinitely many permutatively non-equivalent greedy bases \cite{Smela2007} (cf. \cite{DHK2006}). 

In the $L_{p}$-spaces ($1<p<\infty, p\not=2$) we find other interesting features that are worth it singling out.  They have  greedy basis  (the normalized Haar system, \cite{Temlyakov1998}) and contain complemented subspaces such as $\ell_{p}(\ell_{2})$ and $\ell_{p}\oplus\ell_{2}$ with no greedy basis. One could argue that a reason for this pathology is that $L_{p}$ has no symmetric basis. But Pe{\l}czy{\'n}ski's universal space  denoted by $U$ has a symmetric basis and clearly contains complemented subspaces with no greedy basis (see also Theorems~\ref{ComplementedNotGreedy} and \ref{ComplementedNotGreedySR} below for more natural examples of Orlicz sequence spaces exhibiting this phenomenon).

Notice that  in order to establish the existence and/or the uniqueness up to permutation of greedy basis for
 Banach spaces  without a symmetric basis that have  (UTAP) unconditional basis,  the point is  to determine whether the unique unconditional basis is greedy. 
 This is the case with the aforementioned Tsirelson-type space $\mathcal T^{(2)}$, not to be confused with its close relative, the original Tsirelson space $\mathcal T$.
The former provides an example of a Banach space without a symmetric basis but with a (UTAP) greedy basis.The latter, despite the fact that it has a (UTAP) unconditional basis, fails to contain any democratic basic sequences (cf.\ \cite{DKK2003}*{Remark 5.8}). This observation answers in the negative what we could call {\it the greedy basic sequence problem} (evocative of the unconditional basic sequence problem).

Let us next briefly describe the contents of this article.
For expositional ease, in the preparatory Sect.~\ref{Prelim}  we have gathered some prerequisites on Musielak-Orlicz spaces and  have included the not so well-known concepts of  right/left dominant spaces and of sufficiently lattice Euclidean  spaces. In Sect.\ \ref{Nakano} we study the existence of greedy basis in complemented subspaces of Nakano spaces. In particular we see that certain Nakano spaces fail to have a greedy basis. 
 
 In Sect.~\ref{Orlicz} we turn to Orlicz sequence spaces with an eye to investigating  the existence and uniqueness of greedy bases and we settle Problem~\ref{problemon}. 
 
One could argue that  the spaces with a unique greedy basis in Sect.~\ref{Orlicz} were rigged to be close to   $\ell_{1}$ and that this facilitates their having a unique greedy basis. Perhaps. We accepted the challenge  and in Sections~ \ref{superrefExam} and~\ref{Lorentz} we find other non-trivial examples of spaces with unique greedy basis, this time far from $\ell_{1}$. The example provided in Sect.~\ref{superrefExam} is even super-reflexive. In Sect.~\ref{Lorentz} we show that the separable parts of the weak-$\ell_p$ sequence spaces
 (in contrast to the result for the $\ell_p$ spaces  mentioned above) have a unique greedy basis. 

It should be pointed out, though, that the spaces with a unique greedy basis obtained in Sections ~\ref{Orlicz}, \ref{superrefExam} and \ref{Lorentz} have a symmetric basis. Thus, the main unanswered question in this line of  research seems to be: \begin{problem}\label{problemon22} Does there exist a space with a non-symmetric unique greedy basis up to a
permutation but not a unique unconditional basis up to a permutation?
\end{problem}

In what follows  we employ the standard notation and terminology commonly used in Banach space theory (see, e.g., \cite{AlbiacKalton2006}). A {\it basis} always means a Schauder basis, and all bases will be assumed to be semi-normalized, i.e., the norm of their elements is uniformly bounded above and below.  
Given families of positive real numbers $(\alpha_i)_{i\in I}$ and $(\beta_i)_{i\in I}$, the notation $\alpha_i\lesssim \beta_i$ for all $i\in I$ means that 
$\sup_{i\in I}\alpha_i/\beta_i <\infty$, while $\alpha_i\approx \beta_i$ for all $i\in I$ means that  $\alpha_i\lesssim \beta_i$ and  $\beta_i\lesssim \alpha_i$ for all $i\in I$.
Applied to Banach spaces $\XX$ and $\YY$, the notation $\XX \approx \YY$ means that $\XX$ and $\YY$ are isomorphic.

To quantify the democracy of a basis $\BB=(\ee_n)_{n=1}^\infty$ we will consider  the {\it upper democracy function} (also called the {\it fundamental function}) of $\BB$ in $\XX$, given by
 \[
 \varphi_{u}[\BB,\XX](N)=\sup_{|A|\le  N}\left\Vert \sum_{n\in A}\xx_n\right\Vert, \qquad N=1,2,\dots,
 \]
and the {\it lower democracy function} of $\BB$ in $\XX$, 
 \[
 \varphi_{l}[\BB,\XX](N)=\inf_{|A|\ge N}\left\Vert \sum_{n\in A}\xx_n\right\Vert,\qquad N=1,2,\dots
 \]
A basis $\BB$ is democratic if and only if $\varphi_u[\BB,\XX](N)\approx \varphi_l[\BB,\XX](N)$ for all $N\in\NN$.
Any other more specific notation will be introduced in context when needed.

 \section{Preliminaries}\label{Prelim}
 
 \noindent We summarize some basic facts about Musielak-Orlicz sequence spaces \cite{Musielak1983}  adapted to our needs.

A (normalized) \textit{convex Orlicz function} is a  nonnnegative and nonzero convex function $F\colon[0,\infty)\to[0,\infty)$ such that $F(0)=0$ and $F(1)=1$.

Let $\omega$ be the set of real-valued sequences.  

Given a  (normalized) \textit{convex Musielak-Orlicz sequence} $(F_n)_{n=1}^\infty$, i.e.,   $(F_n)_{n=1}^\infty$ are  convex Orlicz functions, we define the
\textit{Musielak-Orlicz modular} $m_{(F_n)}\colon \omega  \to [0,\infty]$ by
\begin{equation}\label{modularMuseilak}
(a_n)_{n=1}^\infty \mapsto m_{(F_n)}((a_n)_{n=1}^\infty))=\sum_{n=1}^\infty  F_n(|a_n|).
\end{equation}
The  \textit{Musielak-Orlicz norm} $\Vert \cdot \Vert_{(F_n)}\colon  \omega \to [0,\infty]$ is then given by
\begin{equation}\label{normMuseilak} \alpha \mapsto \inf\{t\in(0,\infty)  \tq m_{(F_n)}(\alpha/t) \le 1\},
\end{equation}
and its asociated  (full) \textit{Musielak-Orlicz space} is $(\wMO_{(F_n)},\Vert \cdot \Vert_{(F_n)})$, where
\begin{equation}\label{fullMuseilak} 
\wMO_{(F_n)} =\{ \alpha\in\omega \tq m_{(F_n)}(\alpha/t)<\infty \text{ for some } t>0\}.
\end{equation}
The space $(     \wMO_{(F_n)},\Vert \cdot \Vert_{(F_n)}   )$ is a Banach space 
and the canonical unit vectors  
$(\ee_n)_{n=1}^\infty$ 
form a $1$-unconditonal basic sequence in $ \wMO_{(F_n)}.$  

The (restricted) Musielak-Orlicz sequence space, denoted by $\MO_{(F_n)}$, is  the closed linear span of $(\ee_n)_{n=1}^\infty$ in $\wMO_{(F_n)}$. We have
\begin{equation}\label{restrictedMuseilak} 
\MO_{(F_n)}=\{ \alpha\in\omega \tq m_{(F_n)}(\alpha/t)<\infty \text{ for all } t>0\}.
\end{equation}
Musielak proved the following result:
\begin{theorem}[\cite{Musielak1983}*{Theorem 8.14}]\label{MusielakDensity} Let $=(F_n)_{n=1}^\infty$ be a convex Musielak-Orlicz sequence.  Then $\wMO_{(F_n)}=\MO_{(F_n)}$ if and only  if there exist a positive sequence $(a_n)_{n=1}^\infty \in \ell_1$, some
 $0<\delta<\infty$, and a constant $1<C<\infty$  such that
 \[
 F_n(t) < \delta \Rightarrow F_n(2 t) \le C F_n(t) + a_n.
 \]
\end{theorem}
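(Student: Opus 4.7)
The forward implication is the easier direction. Assuming the stated inequality $F_n(2t) \le C F_n(t) + a_n$ for $F_n(t) < \delta$ with $(a_n) \in \ell_1$, the plan is to show that $m_{(F_n)}(\alpha) < \infty$ forces $m_{(F_n)}(2\alpha) < \infty$; iterating yields $m_{(F_n)}(2^k \alpha) < \infty$ for every $k$, and the monotonicity of the modular in $|\alpha|$ then gives $m_{(F_n)}(\lambda \alpha) < \infty$ for every $\lambda > 0$. Combined with a single rescaling $\alpha \mapsto \alpha/t_0$, this shows that every member of $\wMO_{(F_n)}$ already lies in $\MO_{(F_n)}$. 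The key one-step estimate itself is proved by splitting $\NN$ into $A = \{n : F_n(|\alpha_n|) \ge \delta\}$, which is finite because $\sum_n F_n(|\alpha_n|) < \infty$, and its complement $B$: on $A$ one uses finite-valuedness of each $F_n$; on $B$ the hypothesis yields $\sum_{n \in B} F_n(2|\alpha_n|) \le C\, m_{(F_n)}(\alpha) + \|a\|_1 < \infty$.

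For the converse, I assume that no $(a_n) \in \ell_1$, $\delta > 0$, $C > 1$ validate the inequality, and aim to construct $\alpha \in \wMO_{(F_n)}$ with $m_{(F_n)}(\alpha) < \infty$ yet $m_{(F_n)}(2\alpha) = \infty$. A preliminary strengthening of the failure is useful: for every \emph{finite} $S \subset \NN$, every $\delta > 0$, $C > 1$, and every summable sequence $(a_n)$ of positive terms, one can still find $n \notin S$ and $t > 0$ with $F_n(t) < \delta$ and $F_n(2t) > C F_n(t) + a_n$. Indeed, were this false, one could enlarge each $a_n$ at the finitely many indices $n \in S$ to the finite quantity $\sup\{F_n(2t) : F_n(t) < \delta\}$ (finite since $F_n$ is continuous and unbounded, so $\{F_n < \delta\}$ is bounded) to obtain a valid summable sequence witnessing the condition on all of $\NN$, contradicting the failure.

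I would then inductively build disjoint finite ``blocks'' $(B_k)_{k \ge 1}$ of indices together with positive values $(t_n)_{n \in B_k}$ satisfying $\sum_{n \in B_k} F_n(t_n) \le 2^{-k}$ and $\sum_{n \in B_k} F_n(2 t_n) \ge 1$; setting $\alpha_n := t_n$ on $\bigcup_k B_k$ and $0$ elsewhere will give $m_{(F_n)}(\alpha) \le 1$ while $m_{(F_n)}(2\alpha) = \infty$, so that $\alpha \in \wMO_{(F_n)} \setminus \MO_{(F_n)}$. Each $B_k$ is produced by repeatedly invoking the strengthened failure, on indices outside the set of previously committed ones, with tiny $\delta$, very large $C$, and a fixed $\ell_1$ auxiliary sequence; each step supplies a pair $(n,t)$ whose single-mass contribution sits within the remaining budget, and the selection stops once the accumulated doubled mass exceeds~$1$.

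The main technical obstacle is to guarantee that each block $B_k$ terminates in finitely many steps. The failure only supplies pairs with a large ratio $F_n(2t)/F_n(t)$; it does not control the absolute size of $F_n(t)$, and when $F_n(t)$ happens to be far smaller than $\delta$, the doubled value $F_n(2t)$ may also be small. I would handle this by exploiting the convex Orlicz structure: since each $F_n$ is continuous, increasing, and unbounded, for any chosen target in the range of $F_n$ one can solve $F_n(\widetilde t)=\text{target}$ with $\widetilde t = \lambda t$, $\lambda \ge 1$; the convexity inequality $F_n(\lambda s) \ge \lambda F_n(s)$ then transfers the bad ratio from $(t,2t)$ to $(\widetilde t,2\widetilde t)$, so that enough doubled mass accrues without overshooting the single-mass budget. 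This rescaling and accumulation procedure is the technical heart of the argument.
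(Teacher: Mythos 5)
The paper does not actually prove this statement---it is imported verbatim from Musielak's monograph---so there is no in-paper argument to compare against; I am judging your proposal on its own. Your forward implication is correct and standard: the splitting into the finite set $A=\{n: F_n(|\alpha_n|)\ge\delta\}$ and its complement, the one-step estimate $\sum_{n\in B}F_n(2|\alpha_n|)\le C\,m_{(F_n)}(\alpha)+\sum_n a_n$, the iteration over powers of $2$, and the final rescaling all work. The overall architecture of your converse (disjoint blocks $B_k$ with single mass $\le 2^{-k}$ and doubled mass $\ge 1$, and the ``strengthened failure'' that lets you avoid finitely many committed indices) is also the right skeleton.

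The gap is in the termination of each block, and the rescaling you propose to close it uses convexity in the wrong direction. A fresh witness $(n,t)$ only guarantees doubled mass $F_n(2t)>a_n$, and since $(a_n)\in\ell_1$ these lower bounds are not bounded away from $0$, so the accumulated doubled mass need never reach $1$. Your fix rescales $t$ to $\tilde t=\lambda t$, $\lambda\ge 1$, so that $F_n(\tilde t)$ hits a target $\tau$; but $F_n(\lambda s)\ge\lambda F_n(s)$ is a \emph{lower} bound on $F_n(\tilde t)$, whereas transferring the ratio $F_n(2t)/F_n(t)>C$ to $(\tilde t,2\tilde t)$ would require the \emph{upper} bound $F_n(\lambda t)\le\lambda F_n(t)$, which convexity contradicts. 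Concretely, for $F$ piecewise linear with slopes $\epsilon^2\ll\epsilon\ll 1$ on consecutive intervals, the ratio at a point inside the flattest piece is of order $1/\epsilon$ while the absolute values are negligible, yet rescaling to any fixed target $\tau$ lands past the last breakpoint, where the ratio collapses to an absolute constant; after rescaling you retain only $F_n(2\tilde t)\ge 2F_n(\tilde t)$, and $2^{-k}/\tau$ steps of cost $\tau$ then accumulate doubled mass about $2^{1-k}$, not $\ge 1$. The repair is to use the failure in aggregate rather than one witness at a time. The negation of the condition is equivalent to: for every $C>1$ and $\delta>0$, the series $\sum_n\sup\{(F_n(2t)-CF_n(t))^{+}: F_n(t)<\delta\}$ diverges (otherwise take $a_n$ equal to the $n$-th summand plus $2^{-n}$). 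Fix $C=2^{k}$, $\delta=2^{-k}$, and choose near-optimal $t_n$ for the indices $n$ outside the committed set; divergence yields a finite family with $\sum_n F_n(2t_n)\ge 1$. Since every chosen pair automatically satisfies $F_n(2t_n)>2^{k}F_n(t_n)$, any subfamily has single mass at most $2^{-k}$ times its doubled mass; so either some single pair has $F_n(2t_n)\ge 1$ (use it alone), or a minimal subfamily has doubled mass in $[1,2)$ and hence single mass below $2^{1-k}$. This produces the blocks your construction needs, and the rest of your argument then goes through.
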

A similar theorem  characterizes the inclusions (hence, the identifications) between different  Musielak-Orlicz sequence  spaces. 
\begin{theorem}[\cite{Musielak1983}*{Theorem 8.11}] \label{MusielakInclusion} Let $(F_n)_{n=1}^\infty$ and $(G_n)_{n=1}^\infty$  be two  convex Musielak-Orlicz sequences. Then 
$\wMO_{(F_n)}\subseteq \wMO_{(G_n)}$ if and only if there exist a positive sequence $(a_n)_{n=1}^\infty \in \ell_1$, some $\delta >0$,  and positive constants  $b$ and $C$ such that
\[
G_n(t) < \delta \Rightarrow F_n(t) \le C G_n(b t) + a_n.
\]
\end{theorem}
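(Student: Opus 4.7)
This inclusion-characterization for Musielak-Orlicz spaces is classical; I follow the scheme from \cite{Musielak1983}. The proof breaks into a \emph{sufficiency} direction (condition implies inclusion) and a \emph{necessity} direction (inclusion implies condition).

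For \emph{sufficiency}, take $\alpha\in\wMO_{(F_n)}$, so $\sum_n F_n(|\alpha_n|/t_0)<\infty$ for some $t_0>0$, which in particular forces $F_n(|\alpha_n|/t_0)\to 0$. I want to exhibit $s_0>0$ with $\sum_n G_n(|\alpha_n|/s_0)<\infty$, thereby placing $\alpha$ in $\wMO_{(G_n)}$. The strategy is to choose $s_0$ suitably related to $t_0$ via the parameter $b$ and then partition the indices into the "triggering" set $A=\{n\tq G_n(|\alpha_n|/s_0)<\delta\}$ and its complement $B$. One verifies $|B|<\infty$ by combining the decay $F_n(|\alpha_n|/t_0)\to 0$ with a monotonicity/continuity comparison of $F_n$ and $G_n$ at the threshold $\delta$, so the $B$-contribution to the $G$-modular is automatically a finite sum. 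On $A$ the pointwise inequality $F_n(t)\le CG_n(bt)+a_n$ is available at $t=|\alpha_n|/s_0$; combined with the $\ell_1$-summability of $(a_n)$ and the $F$-modular finiteness of $\alpha$, it yields the desired bound on the partial sums of the $G$-modular.

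For \emph{necessity}, assume $\wMO_{(F_n)}\subseteq\wMO_{(G_n)}$. Both are Banach spaces under the stated Luxemburg-type norms, so the closed graph theorem produces a constant $K\ge 1$ with $\Vert\alpha\Vert_{(G_n)}\le K\Vert\alpha\Vert_{(F_n)}$ for every $\alpha\in\wMO_{(F_n)}$. Rewritten in modular form this says that $m_{(F_n)}(\alpha)\le 1$ entails $m_{(G_n)}(\alpha/K)\le 1$. To extract the pointwise condition I argue by contradiction: if no quadruple $(\delta,b,C,(a_n)\in\ell_1)$ works, then for each such quadruple there is a witness index $n$ and scalar $t$ with $G_n(t)<\delta$ but $F_n(t)>CG_n(bt)+a_n$. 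Ranging over a countable dense family of admissible parameters and running a diagonal selection produces indices $(n_k)$ and heights $(t_k)$ exhibiting ever sharper failures; assembling them as $\alpha=\sum_k t_k\ee_{n_k}$ yields a vector of finite $F$-modular whose $G$-modular defeats every rescaling, contradicting the norm estimate.

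The principal obstacle lies in the necessity direction, specifically the diagonal construction: the witness heights $(t_k)$ must be calibrated so that the $F$-modular $\sum_k F_{n_k}(t_k)$ converges while $\sum_k G_{n_k}(t_k/s)$ diverges for every $s>0$, and the failure must be robust against the "free" $\ell_1$-tail $(a_n)$ built into the hypothesis. For sufficiency the delicate step is verifying $|B|<\infty$ for the chosen $s_0$, which hinges on the uniform tail behaviour of $F_n$ and $G_n$ near the threshold $\delta$ that is implicit in the Musielak framework.
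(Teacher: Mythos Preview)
The paper does not supply its own proof of this theorem: it is quoted verbatim as \cite{Musielak1983}*{Theorem 8.11} and used as a black box, so there is no ``paper's proof'' against which to compare your attempt.

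That said, your sufficiency sketch has a genuine gap. On the triggering set $A=\{n: G_n(|\alpha_n|/s_0)<\delta\}$ you invoke the hypothesis $F_n(t)\le C\,G_n(bt)+a_n$ at $t=|\alpha_n|/s_0$ and claim that, together with $\sum_n a_n<\infty$ and the finiteness of the $F$-modular, this bounds $\sum_{n\in A} G_n(|\alpha_n|/s_0)$. But the inequality runs the wrong way for that purpose: it gives an \emph{upper} bound on $F_n$ in terms of $G_n$, equivalently a \emph{lower} bound $G_n(bt)\ge C^{-1}\bigl(F_n(t)-a_n\bigr)$, which is useless for controlling $\sum G_n$ from above. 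What one actually needs (and what the Musielak argument does) is to start from $\alpha\in\wMO_{(G_n)}$-type information, or equivalently to read the pointwise condition as giving control of the $G$-modular once the $F$-modular is small; your write-up never explains how the displayed inequality produces an upper bound on $G_n$. The claim that $|B|<\infty$ is also asserted without justification: you appeal to ``uniform tail behaviour of $F_n$ and $G_n$ near the threshold $\delta$ that is implicit in the Musielak framework,'' but no such uniformity is part of the hypotheses, and in general the functions $F_n$ vary freely with $n$.

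Your necessity sketch is closer to the standard route (closed graph theorem, then a contradiction via a diagonal construction of a vector with finite $F$-modular and infinite $G$-modular at every scale), but as written it is only an outline: the calibration of the heights $t_k$ so that $\sum_k F_{n_k}(t_k)<\infty$ while $\sum_k G_{n_k}(t_k/s)=\infty$ for every $s>0$, robustly against the free $\ell_1$ sequence $(a_n)$, is precisely the substance of the proof and is not carried out.
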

Note that a Musielak-Orlicz norm is determined by its values on $c_{00}$. Therefore, $\wMO_{(F_n)}\subseteq \wMO_{(G_n)}$ if and only if $\MO_{(F_n)}\subseteq \MO_{(G_n)}$.

Given $A\subseteq \NN$ we denote by $\MO_{(F_n)}[A]$ (respectively,  $\wMO_{(F_n)}[A]$) the complemented subspace of  $\MO_{(F_n)}$ (respectively, complemented of  $\wMO_{(F_n)}$) consisting of all sequences of the space supported in $A$. If $\pi\colon\NN\to \NN$ is an injective mapping and $A=\pi(\NN)$, the sequence
$(\ee_{\pi(n)})_{n=1}^\infty$ is a basis of  $\MO_{(F_n)}[A]$ isometrically equivalent to the unit vector basis of the Musielak-Orlicz space $\MO_{(F_{\pi(n)})}$.
This elementary and useful property can be  formulated in terms of direct sums: given two   convex Musielak-Orlicz sequences $(F_n)_{n=1}^\infty$ and 
$(G_n)_{n=1}^\infty$ the unit vector basis of $\MO_{(F_n)}\oplus \MO_{(G_n)}$ is equivalent to the unit vector basis of $\MO_{(H_n)}$, where 
\[
(H_n)_{n=1}^\infty=(F_1,G_1,F_2,G_2,\dots, H_{2n-1},H_{2n},\dots)
\]
This, combined  with the asymmetry in the canonical basis caused by the variation in $n$ of the Orlicz function $F_n$,  makes of Musielak-Orlicz spaces a suitable place to search for bases which are not equivalent to their square (cf.\ \cite{CasazzaKalton1998}*{Proposition 6.8}).

A  Banach sequence space $\XX$ is called \textit{right dominant} if there is  a constant $C$ such that whenever $(x_j)_{j=1}^N$ and  $(y_j)_{j=1}^N$ in $\XX$ are such that 
$\supp x_i\cap\supp x_j=\supp y_i\cap\supp y_j=\emptyset$ for $i\not=j$ and, for $j=1,\dots, N$ we have
$ \Vert x_j \Vert  \le \Vert y_j \Vert$ and $\max\{n \colon n\in\supp x_j \} < \min\{n \colon n\in\supp y_j \}$
 it implies
 \[
 \left\Vert \sum_{j=1}^N x_j \right\Vert \le C  \left\Vert \sum_{j=1}^N y_j \right\Vert.
 \]
 
 A Banach space $\XX$ is called \textit{sufficiently (lattice) Euclidean} if there is a constant $\lambda$ such that for any $n$ there are operators  $S_n\colon X\to \ell_2^n$  and $T_n\colon \ell_2^n \to X$ such that  $S_n$ and $T_n$ are a (lattice) homomorphism, $S_n\circ T_n =\Id_{\ell_2^n}$, and 
$\Vert S_n \Vert  \Vert T_n \Vert \le \lambda$. We will say that $\XX$ is \textit{(lattice) anti-Euclidean} if it is not sufficiently (lattice) Euclidean.
An unconditional basic sequence $\BB =(\xx_{n})_{n=1}^{\infty}$ induces a lattice structure on its closed linear span $\YY=[\xx_{n}\colon n\in \NN]$. We say that  $\BB$ is 
lattice anti-Euclidean if $\YY$ is  lattice anti-Euclidean.

\section{Existence of greedy bases in Nakano spaces}\label{Nakano}

\noindent If $1\le p<\infty$ the map $t\mapsto t^p$ is a  convex Orlicz function.  Given  a sequence  $(p_n)_{n=1}^\infty$ in $[1,\infty)$,  the Musielak-Orlicz spaces defined as in \eqref{fullMuseilak} and \eqref{restrictedMuseilak} for the sequence of Orlicz functions $(F_n)_{n=1}^\infty$, where
\[
F_n(t)=t^{p_n}
\]
are denoted them by $\wMO_{(p_n)}$ and $\MO_{(p_n)}$ and are called {\it Nakano spaces}.
The corresponding modular and norm, as defined  in \eqref{modularMuseilak} and \eqref{normMuseilak}, for this particular case   will be denoted respectively by  $m_{(p_n)}$ and  $\Vert \cdot \Vert_{(p_n)}$.

These  spaces were introduced by Nakano in 1950 \cite{Nakano1950}. In  \cite{Nakano1951}, he completely  characterized the inclusions  between these spaces. In 1965, Simmons \cite{Simmons1965} obtained a similar result in the non-locally convex setting (i.e., when $p_n\le 1$ for all $n$). We refer to \cite{BlascoGregori2001} for a proof  of Theorem~\ref{NakanoInclusion} based on Theorem~\ref{MusielakInclusion}.
\begin{theorem}\label{NakanoInclusion}
Let $(p_n)_{n=1}^\infty$  and $(q_n)_{n=1}^\infty$ be  two sequences in $[1,\infty)$. Then  $\wMO_{(p_n)} \subseteq \wMO_{(q_n)} $ (with continuous inclusion) if and only if
there is $0<r<1$ such that
 \[
 \sum_{q_n<p_n} r^{ p_n q_n / (p_n -q_n)} <\infty.
\]
\end{theorem}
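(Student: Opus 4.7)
The plan is to apply Theorem~\ref{MusielakInclusion} to the Musielak--Orlicz sequences $F_n(t)=t^{p_n}$ and $G_n(t)=t^{q_n}$ and reduce the characterization to an elementary one-variable calculus problem. After this substitution, Musielak's abstract modular criterion becomes a pointwise comparison between two monomials on a small neighborhood of the origin.

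The first observation is that the indices with $p_n\le q_n$ contribute nothing: on this ``good'' set, $t\le 1$ implies $t^{p_n}\ge t^{q_n}$, and so the required monomial inequality is automatic with $a_n=0$ once the free constants $C,b\ge 1$ and $\delta\le 1$ are suitably chosen. Hence the entire content of Musielak's condition is concentrated on the bad set $B=\{n\tq q_n<p_n\}$, which is precisely the indexing set of the sum in the statement. For each $n\in B$ the key quantity is the supremum over $t>0$ of the defect
\[
h_n(t)=t^{q_n}-C\,b^{p_n}t^{p_n}.
\]
A one-line differentiation identifies the maximizer $t_n^\ast=(q_n/(Cp_n b^{p_n}))^{1/(p_n-q_n)}$ and yields
\[
h_n(t_n^\ast)=\frac{p_n-q_n}{p_n}\left(\frac{q_n}{Cp_n}\right)^{q_n/(p_n-q_n)} b^{-p_n q_n/(p_n-q_n)}.
\]
Writing $r=1/b\in(0,1)$ identifies the exponential factor as $r^{p_n q_n/(p_n-q_n)}$, exactly the quantity appearing in the theorem, and for $C\ge 1$ the algebraic prefactor is at most~$1$.

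The sufficiency direction is then immediate: given $\sum_{q_n<p_n}r_0^{p_n q_n/(p_n-q_n)}<\infty$ for some $r_0\in(0,1)$, I would set $b=1/r_0$, $C=1$, and put $a_n=h_n(t_n^\ast)$ for $n\in B$ (and $a_n=0$ off~$B$); then $(a_n)\in\ell_1$ by hypothesis and Musielak's inequality holds by construction of $a_n$. For necessity, Theorem~\ref{MusielakInclusion} supplies constants $C,b,\delta$ and $(a_n)\in\ell_1$; rather than testing at $t_n^\ast$ itself, I would evaluate the Musielak inequality at the shifted point $t=(1/(Kb^{p_n}))^{1/(p_n-q_n)}$ with $K=2C$, at which a short calculation gives
\[
h_n(t)=\tfrac{1}{2}K^{-q_n/(p_n-q_n)}r^{p_n q_n/(p_n-q_n)}\ge \tfrac{1}{2}(r/K)^{p_n q_n/(p_n-q_n)},
\]
where the last inequality uses $p_n\ge 1$. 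Setting $r'=r/K\in(0,1)$ and summing $a_n\ge h_n(t)$ over $B$ gives the required convergence.

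The main obstacle I foresee is precisely this last step: the algebraic prefactor in the exact maximum $h_n(t_n^\ast)$ can shrink to zero when $p_n-q_n\to 0$, so plugging in $t_n^\ast$ alone does not deliver a clean exponential lower bound on $a_n$. Evaluating instead at a carefully chosen off-maximum point where the defect splits as $K^{-q_n/(p_n-q_n)}$ times the desired exponential, and then absorbing the $K$ into the base via $r\mapsto r/K$, is what makes the summability condition fall out with a strictly smaller---but still non-zero---value of~$r$.
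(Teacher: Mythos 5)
Your proposal is correct and follows the same route the paper intends: the paper gives no proof of Theorem~\ref{NakanoInclusion} itself but refers to Blasco--Gregori for a derivation from Theorem~\ref{MusielakInclusion}, which is exactly the reduction you carry out, and your optimization of the defect $h_n$ together with the off-maximum test point in the necessity direction is a correct way to make that reduction quantitative. Two caveats are worth recording. First, you have silently used the comparison in the form $G_n(t)\le C F_n(bt)+a_n$ for the inclusion $\wMO_{(F_n)}\subseteq\wMO_{(G_n)}$, whereas Theorem~\ref{MusielakInclusion} as printed in the paper reads $F_n(t)\le C G_n(bt)+a_n$; your form is the one consistent with Musielak's Theorem 8.11 and with the statement of Theorem~\ref{NakanoInclusion} (a literal application of the printed inequality would place the obstruction on the set $\{n \tq p_n<q_n\}$ rather than $\{n \tq q_n<p_n\}$ and would make your ``good set'' observation fail), so you should flag the transposition explicitly rather than apply the theorem verbatim. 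Second, in the necessity step you must check that the test point $t=(Kb^{p_n})^{-1/(p_n-q_n)}$ lies in the region where Musielak's inequality is actually guaranteed, i.e.\ where the relevant modular is below $\delta$; this holds after enlarging $K$ to $\max\{2C,1/\delta,2\}$, which only shrinks $r'=r/K$ and therefore does not affect the summability conclusion.
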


\begin{corollary}\label{NakanoIdentification}
Let $(p_n)_{n=1}^\infty$  and $(q_n)_{n=1}^\infty$ be two Nakano indices. Then  $\wMO_{(p_n)} = \wMO_{(q_n)} $  (with equivalent norms)
if and only if there is $0<r<1$ such that
 \[
 \sum_{n=1}^\infty r^{ p_n q_n / |p_n -q_n|} <\infty.
\]
\end{corollary}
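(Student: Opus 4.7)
The plan is to derive the corollary from Theorem~\ref{NakanoInclusion} applied in both directions, since $\wMO_{(p_n)} = \wMO_{(q_n)}$ with equivalent norms is equivalent to having both continuous inclusions $\wMO_{(p_n)} \subseteq \wMO_{(q_n)}$ and $\wMO_{(q_n)} \subseteq \wMO_{(p_n)}$.

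First, I would apply Theorem~\ref{NakanoInclusion} with $(p_n)$ and $(q_n)$ in the stated roles to obtain the existence of some $0<r_1<1$ with
\[
\sum_{q_n<p_n} r_1^{p_n q_n/(p_n-q_n)} <\infty,
\]
and then apply it with the sequences exchanged to obtain $0<r_2<1$ with
\[
\sum_{p_n<q_n} r_2^{p_n q_n/(q_n-p_n)} <\infty.
\]
The two partial sums together exhaust all indices $n$ with $p_n\neq q_n$, so the task reduces to unifying the two different bases $r_1$ and $r_2$ into a single value $r$.

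Next I would exploit the monotonicity observation that for fixed exponent $x>0$ the map $s\mapsto s^x$ is increasing on $(0,1)$. Setting $r=\min(r_1,r_2)\in(0,1)$ gives $r^x\le r_i^x$ for every $x>0$, so termwise domination yields
\[
\sum_{p_n\neq q_n} r^{p_n q_n/|p_n-q_n|}
=\sum_{q_n<p_n} r^{p_n q_n/(p_n-q_n)}+\sum_{p_n<q_n} r^{p_n q_n/(q_n-p_n)}<\infty.
\]
Interpreting the exponent $p_n q_n/|p_n-q_n|$ as $+\infty$ when $p_n=q_n$ (so $r^{\infty}=0$ contributes nothing), one obtains $\sum_{n=1}^\infty r^{p_nq_n/|p_n-q_n|}<\infty$, which is the stated convergence. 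Conversely, if a single $r\in(0,1)$ makes the full sum converge, then each of the two partial sums converges separately, so Theorem~\ref{NakanoInclusion} delivers both inclusions.

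There is essentially no main obstacle here: the argument is a routine combination of the two one-sided criteria, the only subtle points being the convention used at indices where $p_n=q_n$ and the elementary monotonicity of $s\mapsto s^x$ that allows merging $r_1$ and $r_2$ into one constant.
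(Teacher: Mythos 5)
Your argument is correct and is exactly the intended derivation: the paper states this as an immediate corollary of Theorem~\ref{NakanoInclusion} without writing out a proof, and the only content is the two-sided application of that theorem together with the merging of $r_1$ and $r_2$ via $r=\min(r_1,r_2)$ and the convention that indices with $p_n=q_n$ contribute nothing, all of which you handle properly.
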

Notice that inclusions between Nakano spaces work as in the $\ell_p$ spaces, in the sense that  if $(p_n)_{n=1}^\infty$ and $(q_n)_{n=1}^\infty$ are Nakano indices  satisfying  $p_n\le q_n$ for all $n\in\NN$, then $\wMO_{(p_n)} \subseteq\wMO_{(q_n)} $.  We will need a more quantitative formulation of this result valid even for \textit{finite dimensional Nakano spaces}.
\begin{lemma}\label{NormOneInequality}Let $\eta\in\NN\cup\{\infty\}$. Let $(p_n)_{n=1}^\eta$, $(q_n)_{n=1}^\eta$ such that $1\le p_n\le q_n <\infty$.  Then
\[
\left\Vert  (a_n)_{n=1}^\eta \right\Vert_{(q_n)} \le \left\Vert  (a_n)_{n=1}^\eta \right\Vert_{(p_n)} 
\]
for all $(a_n)_{n=1}^\eta$ in $\RR$.
\end{lemma}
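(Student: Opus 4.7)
The plan is to work directly from the definition of the Musielak-Orlicz norm in \eqref{normMuseilak} and exploit the monotonicity of the map $s\mapsto s^r$ on the unit interval. There is no real obstacle; the entire point is a pointwise comparison of the modulars, once the argument inside the norm has been rescaled to lie in $[0,1]$.

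Concretely, I will fix a sequence $(a_n)_{n=1}^\eta$ and pick any $t>0$ satisfying $m_{(p_n)}((a_n)/t)\le 1$, i.e.,
\[
\sum_{n=1}^{\eta}\left(\frac{|a_n|}{t}\right)^{p_n}\le 1.
\]
Each summand is nonnegative, so every individual term is at most $1$, which forces $|a_n|/t\le 1$ for all $n$. Now comes the only substantive observation: for any $s\in[0,1]$ and any exponents $1\le p\le q<\infty$, one has $s^{q}\le s^{p}$. Applying this with $s=|a_n|/t$, $p=p_n$, $q=q_n$ and summing over $n$ gives
\[
m_{(q_n)}\bigl((a_n)/t\bigr)=\sum_{n=1}^{\eta}\left(\frac{|a_n|}{t}\right)^{q_n}\le\sum_{n=1}^{\eta}\left(\frac{|a_n|}{t}\right)^{p_n}\le 1.
\]
Hence $t$ lies in the admissible set for the $(q_n)$-norm, so $\|(a_n)\|_{(q_n)}\le t$.

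Taking the infimum over all admissible $t$ on the right yields $\|(a_n)\|_{(q_n)}\le\|(a_n)\|_{(p_n)}$, which is the claim. The case $\|(a_n)\|_{(p_n)}=\infty$ is trivial, and the case $\eta<\infty$ is handled by the same argument, since the rescaling step does not depend on how many terms are present. No convexity, homogeneity or density argument beyond the definition of $\|\cdot\|_{(F_n)}$ is required.
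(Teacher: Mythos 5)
Your proof is correct. The paper states this lemma without proof, and your argument --- observing that any $t$ admissible for the $(p_n)$-modular forces $|a_n|/t\le 1$ termwise, so that $(|a_n|/t)^{q_n}\le(|a_n|/t)^{p_n}$ and $t$ is also admissible for the $(q_n)$-modular --- is exactly the standard (and surely intended) one, with the degenerate case $\left\Vert (a_n)\right\Vert_{(p_n)}=\infty$ correctly noted as trivial.
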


The following theorem is crucial  in Casazza-Kalton's discussion  on Nakano spaces in \cite{CasazzaKalton1998}.
\begin{theorem}\label{RightDom}
Let $(p_n)_{n=1}^\infty$ be  a decreasing sequence in $[1,\infty)$. Then  $\MO_{(p_n)}$
is a right-dominant sequence space.
 \end{theorem}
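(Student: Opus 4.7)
The plan is to prove the stronger statement that $\MO_{(p_n)}$ is $1$-right-dominant, namely that
\[
\Bigl\Vert \sum_{j=1}^N x_j \Bigr\Vert_{(p_n)} \le \Bigl\Vert \sum_{j=1}^N y_j \Bigr\Vert_{(p_n)}
\]
whenever the hypotheses of the theorem are met. Two ingredients enter: the $1$-unconditionality of the canonical basis of $\MO_{(p_n)}$, and the monotonicity of $\alpha \mapsto \alpha^p$ in $p$ for a fixed $\alpha \in [0,1]$, which converts the hypothesis that $(p_n)$ is decreasing into a pointwise comparison of the contributions to the modular $m_{(p_n)}$.

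After normalizing so that $\Vert \sum_j y_j\Vert_{(p_n)} = 1$, equivalently $m_{(p_n)}(\sum_j y_j) \le 1$, the goal becomes to show $m_{(p_n)}(\sum_j x_j) \le 1$. By the disjointness of supports within each sequence, the modular is additive across blocks: $m_{(p_n)}(\sum_j y_j) = \sum_j m_{(p_n)}(y_j)$, and likewise for the $x_j$. By $1$-unconditionality, the coordinate projection onto $\supp y_j$ has norm $1$, so $\alpha_j := \Vert y_j \Vert_{(p_n)} \le 1$; by hypothesis, $\beta_j := \Vert x_j \Vert_{(p_n)} \le \alpha_j$. Writing $y_j = \alpha_j \tilde y_j$ and $x_j = \beta_j \tilde x_j$ with $\Vert \tilde y_j \Vert_{(p_n)} = \Vert \tilde x_j \Vert_{(p_n)} = 1$, continuity of the modular on $\MO_{(p_n)}$ gives $m_{(p_n)}(\tilde y_j) = m_{(p_n)}(\tilde x_j) = 1$.

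The crux is a blockwise comparison. Set $t_j = \max\{p_n \tq n \in \supp y_j\}$ and $s_j = \min\{p_n \tq n \in \supp x_j\}$. Because $(p_n)$ is decreasing and $\max \supp x_j < \min \supp y_j$, we have $s_j \ge t_j$. Since $0 < \alpha_j \le 1$ and $p_n \le t_j$ on $\supp y_j$, the inequality $\alpha_j^{p_n} \ge \alpha_j^{t_j}$ yields
\[
m_{(p_n)}(y_j) = \sum_{n \in \supp y_j} \alpha_j^{p_n} |\tilde y_{j,n}|^{p_n} \ge \alpha_j^{t_j} m_{(p_n)}(\tilde y_j) = \alpha_j^{t_j}.
\]
Dually, $\beta_j \le 1$ and $p_n \ge s_j$ on $\supp x_j$ give $\beta_j^{p_n} \le \beta_j^{s_j}$, hence $m_{(p_n)}(x_j) \le \beta_j^{s_j}$. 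Since $\beta_j \le \alpha_j \le 1$ and $s_j \ge t_j$, one obtains $\beta_j^{s_j} \le \alpha_j^{s_j} \le \alpha_j^{t_j}$, and therefore $m_{(p_n)}(x_j) \le m_{(p_n)}(y_j)$. Summing over $j$ delivers $m_{(p_n)}(\sum_j x_j) \le m_{(p_n)}(\sum_j y_j) \le 1$, as required. No serious technical obstacle is expected; the only conceptual point is to notice that the decreasing hypothesis on $(p_n)$ translates directly into the exponent ordering $s_j \ge t_j$, which is precisely what enables the modular estimate to be carried out on each block independently.
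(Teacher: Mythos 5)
Your proof is correct and in fact yields the sharper conclusion that $\MO_{(p_n)}$ is $1$-right-dominant; the paper states Theorem~\ref{RightDom} without proof, quoting it from Casazza--Kalton, and the blockwise modular comparison you perform (converting the decreasing hypothesis on $(p_n)$ into the exponent ordering $s_j\ge t_j$) is essentially the standard argument behind that citation. The only housekeeping left implicit is the disposal of degenerate blocks (if $y_j=0$ then $\Vert x_j\Vert\le\Vert y_j\Vert$ forces $x_j=0$) and the remark that $\sup_n p_n=p_1<\infty$, which is what makes the modular finite-valued and continuous along rays and hence justifies both $m_{(p_n)}(\sum_j y_j)\le 1$ at the normalization step and $m_{(p_n)}(\tilde y_j)=m_{(p_n)}(\tilde x_j)=1$.
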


The  spaces $\ell_p$ for $1\le p<\infty$ are  Nakano spaces $\wMO_{(p_n)}$ in which  $(p_{n})$ is the constant  sequence $p$, while the space
 $\ell_\infty$ is not,  a priori, a  Nakano space. However, we can state a theorem similar to Corollary~\ref{NakanoIdentification} that  characterizes when $\wMO_{(p_n)}$ coincides with the space $\ell_\infty$ (i.e., the corresponding Nakano space $\MO_{(p_n)}$ agrees with $c_0$).  
 The proof is similar to that of Theorem~\ref{NakanoInclusion}.
\begin{theorem}\label{NakanoInftyIdentification}
Let $(p_n)_{n=1}^\infty$  be a sequence in $[1,\infty)$. Then  $\wMO_{(p_n)}=\ell_\infty$ (with equivalent norms) if and only if 
there is $0<r<1$ such that
 $ \sum_{n=1}^\infty r^{p_n} <\infty$.
 \end{theorem}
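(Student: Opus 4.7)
The plan is to establish both directions directly, sidestepping Theorem~\ref{MusielakInclusion} since $\ell_\infty$ is not literally a Musielak--Orlicz sequence space in the sense of Section~\ref{Prelim}. The key structural observation that simplifies matters compared with Theorem~\ref{NakanoInclusion} is that the embedding $\wMO_{(p_n)}\hookrightarrow\ell_\infty$ is automatic with norm one whenever $p_n\ge 1$. Indeed, if $s:=\|(a_n)\|_{(p_n)}>0$, then by the lower semicontinuity of $m_{(p_n)}$ (monotone convergence applied to the partial sums as $t\downarrow s$) one has $m_{(p_n)}((a_n)/s)\le 1$, so each term $(|a_n|/s)^{p_n}\le 1$, forcing $|a_n|\le s$. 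Thus the statement reduces to characterizing when the reverse embedding $\ell_\infty\subseteq\wMO_{(p_n)}$ is continuous.

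For the sufficiency direction, given $S:=\sum_{n=1}^\infty r^{p_n}<\infty$ and $(a_n)\in\ell_\infty$ with $A:=\|(a_n)\|_\infty>0$, I would choose the dilation factor $t:=A\max(S,1)/r$ and verify $m_{(p_n)}((a_n)/t)\le 1$. Since $|a_n|/t\le r/\max(S,1)\le 1$, the hypothesis $p_n\ge 1$ yields
\[
(|a_n|/t)^{p_n} \le \frac{r^{p_n}}{\max(S,1)^{p_n}} \le \frac{r^{p_n}}{\max(S,1)},
\]
and summation gives $m_{(p_n)}((a_n)/t)\le S/\max(S,1)\le 1$. This produces the continuous embedding $\ell_\infty\subseteq\wMO_{(p_n)}$ with constant $\max(S,1)/r$.

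For the necessity direction, a single test vector does all the work: if $\wMO_{(p_n)}=\ell_\infty$ with equivalent norms, then $(1,1,1,\ldots)\in\wMO_{(p_n)}$, so setting $C:=\|(1,1,\ldots)\|_{(p_n)}$ and picking any $t>C$ one reads off $\sum_{n=1}^\infty (1/t)^{p_n}=m_{(p_n)}((1/t)_{n=1}^\infty)\le 1$ directly from the Luxemburg norm, and $r:=1/t\in(0,1)$ satisfies the required summability. I do not anticipate a serious obstacle; the only point demanding a little care is the $\max(S,1)$ scaling in the sufficiency estimate, needed to absorb the regime $S\ge 1$, a nuance that is absent from the quantitatively cleaner estimates underlying the proof of Theorem~\ref{NakanoInclusion}.
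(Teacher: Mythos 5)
Your proof is correct, and it takes a genuinely different route from the paper. The paper offers no written argument for this theorem beyond the remark that the proof is ``similar to that of Theorem~\ref{NakanoInclusion}'', whose proof in the cited reference proceeds through Musielak's general inclusion criterion (Theorem~\ref{MusielakInclusion}); to follow that route one must first realize $\ell_\infty$ as a Musielak--Orlicz space $\wMO_{(G_n)}$ for a suitable non-power normalized convex Orlicz function (for instance $G_n(t)=\max(0,2t-1)$) and then translate the condition $G_n(t)<\delta\Rightarrow F_n(t)\le CG_n(bt)+a_n$ into the summability of $\sum_n r^{p_n}$. You instead work directly with the modular and the Luxemburg norm: the norm-one inclusion $\wMO_{(p_n)}\subseteq\ell_\infty$ obtained from lower semicontinuity of the modular at the norm value, the explicit dilation $t=A\max(S,1)/r$ for the converse inclusion, and the single test vector $(1,1,1,\dots)$ for necessity. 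All three steps check out; the one point you leave implicit is that $r=1/t<1$ in the necessity step, which does follow since $\left\Vert (1,1,\dots)\right\Vert_{(p_n)}\ge\left\Vert (1,1,\dots)\right\Vert_\infty=1$ by your first observation, so any admissible $t$ exceeds $1$. What your approach buys is a self-contained elementary proof with explicit constants that avoids the slightly awkward step of fitting $\ell_\infty$ into the Musielak--Orlicz framework; what the paper's reduction buys is uniformity of treatment with Theorem~\ref{NakanoInclusion} and Corollary~\ref{NakanoIdentification} at the price of delegating the computation to the literature.
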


Let us next enunciate a result that  characterizes when $\wMO_{(p_n)}$ and $\MO_{(p_n)}$ agree. 
\begin{theorem}[\cite{Nakano1951}]\label{NakanoDensity} Let $(p_n)_{n=1}^\infty$ be a sequence in $[1,\infty)$.
Then $\wMO_{(p_n)}=\MO_{(p_n)}$ if and only if $\sup_n p_n<\infty$.
\end{theorem}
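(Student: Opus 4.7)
I would prove both directions directly, using only the homogeneity $F_n(\lambda t)=\lambda^{p_n}F_n(t)$ of the Nakano functions; one could alternatively verify or refute the hypothesis of Theorem~\ref{MusielakDensity} in this concrete setting, but the direct route is simpler.

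For the ``if'' direction, suppose $p:=\sup_n p_n<\infty$ and let $\alpha=(a_n)_{n=1}^\infty\in\wMO_{(p_n)}$. Pick $t_0>0$ with $m_{(p_n)}(\alpha/t_0)<\infty$. For arbitrary $s>0$, write $(|a_n|/s)^{p_n}=(t_0/s)^{p_n}(|a_n|/t_0)^{p_n}$ and observe that $(t_0/s)^{p_n}\le \max\{1,(t_0/s)^p\}$ for every $n$. Summing over $n$ yields
\[
m_{(p_n)}(\alpha/s)\le \max\{1,(t_0/s)^p\}\, m_{(p_n)}(\alpha/t_0)<\infty,
\]
so $\alpha\in\MO_{(p_n)}$ by the defining condition \eqref{restrictedMuseilak}.

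For the ``only if'' direction, assume $\sup_n p_n=\infty$, extract a subsequence $(n_k)$ with $p_{n_k}\ge k$, and set $\alpha=\sum_{k=1}^\infty \ee_{n_k}$, i.e., $a_{n_k}=1$ and $a_n=0$ otherwise. Then
\[
m_{(p_n)}(\alpha/2)=\sum_{k=1}^\infty 2^{-p_{n_k}}\le \sum_{k=1}^\infty 2^{-k}<\infty,
\]
so $\alpha\in\wMO_{(p_n)}$. On the other hand, for any $t\in(0,1)$ we have $t^{-p_{n_k}}\to\infty$ as $k\to\infty$, whence $m_{(p_n)}(\alpha/t)=\infty$, and thus $\alpha\notin\MO_{(p_n)}$.

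There is no serious obstacle: the whole argument rests on the elementary scaling identity $F_n(\lambda t)=\lambda^{p_n}F_n(t)$, which makes the dilation behaviour of $m_{(p_n)}$ transparent. The only point worth underlining is that membership in $\MO_{(p_n)}$ requires $m_{(p_n)}(\alpha/s)<\infty$ for \emph{every} $s>0$, not merely for some; the counterexample $\alpha=\sum_k \ee_{n_k}$ is designed precisely to fail this for $s\in(0,1)$ when the exponents $p_{n_k}$ are unbounded.
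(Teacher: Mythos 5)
Your proof is correct. Note that the paper itself offers no argument for this statement: it is quoted from Nakano's 1951 paper, with Theorem~\ref{MusielakDensity} (Musielak's general criterion) stated nearby as the natural abstract tool. Your direct route is a legitimate and fully self-contained alternative: the ``if'' direction correctly exploits $1\le p_n\le p$ to get $(t_0/s)^{p_n}\le\max\{1,(t_0/s)^p\}$, and the ``only if'' direction produces an explicit element of $\wMO_{(p_n)}\setminus\MO_{(p_n)}$, using that the terms $t^{-p_{n_k}}$ do not even tend to zero when $t<1$. The only external input you use is the identification \eqref{restrictedMuseilak} of $\MO_{(p_n)}$ (defined as the closed span of $(\ee_n)_{n=1}^\infty$) with the set of sequences whose modular is finite at every scale; since the paper asserts this identification, your argument is complete. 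Had you instead specialized Theorem~\ref{MusielakDensity}, you would have obtained the same dichotomy from $F_n(2t)=2^{p_n}F_n(t)$ (the $\Delta_2$-type condition holds with $C=2^{\sup_n p_n}$ when the exponents are bounded, and forces $a_n\gtrsim 2^{p_n}\delta$, incompatible with $(a_n)_{n=1}^\infty\in\ell_1$, when they are not); your elementary computation avoids invoking that machinery, at no cost in length.
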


The same condition as in Theorem~\ref{NakanoDensity} allows us to characterize  boundedness in Nakano spaces without appealing to the Nakano norm.
\begin{proposition}\label{NakanoBoundedness} Suppose  $(p_n)_{n=1}^\infty$ is a sequence in $[1,\infty)$ such that $\sup_n p_n<\infty$. Let  $\mathcal{A}$ be  a subset in $\wMO_{(p_n)}$.
Then: 
\begin{enumerate} 

\item[(a)] $\mathcal{A}$  is 
norm-bounded above (i.e., $\sup_{\alpha\in \mathcal{A}} \Vert \alpha\Vert_{(p_n)}<\infty$) if and only if it is modular-bounded  above (i.e., $\sup_{\alpha\in \mathcal{A}} m_{(p_n)}(\alpha)<\infty$).

\item[(b)]   $\mathcal{A}$  is 
norm-bounded below (i.e., $\inf_{\alpha\in \mathcal{A}} \Vert \alpha\Vert_{(p_n)}>0$)  if and only if it is modular-bounded below
( i.e., $\inf_{\alpha\in \mathcal{A}} m_{(p_n)}(\alpha)>0$).
\end{enumerate}
\end{proposition}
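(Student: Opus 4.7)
The plan is to derive the sharp two-sided quantitative comparison
\[
\min(\Vert\alpha\Vert_{(p_n)},\Vert\alpha\Vert_{(p_n)}^P)\le m_{(p_n)}(\alpha)\le \max(\Vert\alpha\Vert_{(p_n)},\Vert\alpha\Vert_{(p_n)}^P),
\]
valid for every $\alpha\in\wMO_{(p_n)}$, where $P:=\sup_n p_n<\infty$. Once this is in place, parts (a) and (b) follow immediately: since both $t\mapsto \max(t,t^P)$ and $t\mapsto \min(t,t^P)$ are strictly increasing bijections of $[0,\infty)$ onto itself, a subset of $\wMO_{(p_n)}$ is norm-bounded above (respectively, below) if and only if it is modular-bounded above (respectively, below).

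For the upper estimate, fix $\alpha\neq 0$ and set $c=\Vert\alpha\Vert_{(p_n)}$. I would first verify that $m_{(p_n)}(\alpha/c)\le 1$: choosing $t_k\downarrow c$ with $m_{(p_n)}(\alpha/t_k)\le 1$ (available by the infimum in \eqref{normMuseilak}), termwise monotone convergence applied to the series in \eqref{modularMuseilak} gives $m_{(p_n)}(\alpha/c)=\lim_k m_{(p_n)}(\alpha/t_k)\le 1$. Rewriting $m_{(p_n)}(\alpha)=\sum_n c^{p_n}(|a_n|/c)^{p_n}$, I would then split cases on the size of $c$: if $c\le 1$, the hypothesis $p_n\ge 1$ yields $c^{p_n}\le c$ for every $n$, so $m_{(p_n)}(\alpha)\le c\cdot m_{(p_n)}(\alpha/c)\le c$; if $c\ge 1$, the hypothesis $p_n\le P$ yields $c^{p_n}\le c^P$, hence $m_{(p_n)}(\alpha)\le c^P$.

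For the lower estimate, any $t\in(0,c)$ lies outside the defining set of the infimum, so $m_{(p_n)}(\alpha/t)>1$. Writing $m_{(p_n)}(\alpha/t)=\sum_n t^{-p_n}|a_n|^{p_n}$, the same case split runs in reverse: for $t\ge 1$, $t^{-p_n}\le t^{-1}$ gives $t^{-1}m_{(p_n)}(\alpha)>1$, while for $t\le 1$, $t^{-p_n}\le t^{-P}$ gives $t^{-P}m_{(p_n)}(\alpha)>1$. Letting $t\uparrow c$ yields $m_{(p_n)}(\alpha)\ge c$ when $c\ge 1$ and $m_{(p_n)}(\alpha)\ge c^P$ when $c\le 1$, closing the sandwich. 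The whole argument is routine Orlicz-type scaling; the only mild technical point is the termwise passage to the limit that produces $m_{(p_n)}(\alpha/c)\le 1$, which is precisely where membership in $\wMO_{(p_n)}$ (guaranteeing finiteness of $m_{(p_n)}(\alpha/t_0)$ for some $t_0>0$) enters to justify monotone convergence.
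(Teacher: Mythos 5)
Your proposal is correct and follows essentially the same route as the paper: the paper's proof consists precisely of asserting the sandwich estimate $\min\{\Vert\alpha\Vert_{(p_n)},\Vert\alpha\Vert_{(p_n)}^{s}\}\le m_{(p_n)}(\alpha)\le\max\{\Vert\alpha\Vert_{(p_n)},\Vert\alpha\Vert_{(p_n)}^{s}\}$ with $s=\sup_n p_n$ and declaring the proposition an easy consequence, and you supply a correct verification of exactly that estimate together with the monotonicity observation that converts it into (a) and (b).
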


\begin{proof} Let $s=\sup_n p_n$. The result is  an easy consequence of the estimates
\[
\min\{ \Vert \alpha \Vert_{(p_n)} , \Vert \alpha \Vert_{(p_n)}^s \} \le m_{(p_n)}(\alpha) \le \max\{ \Vert \alpha \Vert_{(p_n)} , 
\Vert \alpha \Vert_{(p_n)}^s \}
\]
for any $\alpha\in\omega$. \end{proof}
We would like to point out the close connection   between Proposition~\ref{NakanoBoundedness} and the equivalence between the \textit{norm convergence} and the \textit{modular convergence} obtained in \cite{Musielak1983}*{Theorem 8.14}.  However,  Proposition~\ref{NakanoBoundedness} provides   a formulation  more  fit for our purposes.

Duality in Nakano space works as expected. We refer the reader to \cite{Musielak1983}*{Theorem 13.11} for a more general result in the setting of Musielak-Orlicz spaces.
\begin{theorem}[\cite{Nakano 1950}]\label{NakanoDuality} Let $(p_n)_{n=1}^\infty$ be a sequence in $[1,\infty)$. Let $(q_n)_{n=1}^\infty$ be  a sequence in $(1,\infty)$ such that $1/p_n+1/q_n=1$ if $p_n>1$ and, for some $0<r<1$,  $\sum_{p_n=1} r^{q_n}<\infty$. 
Then $\MO_{(p_n)}^*=\wMO_{(q_n)}$ with the natural duality pair and equivalent norms.
\end{theorem}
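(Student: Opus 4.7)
The plan is to establish $\MO_{(p_n)}^{*}=\wMO_{(q_n)}$ via the canonical bilinear pairing $\langle\alpha,\beta\rangle=\sum_{n}a_{n}b_{n}$. Partition the index set as $\NN=\NN_{1}\sqcup\NN_{2}$ with $\NN_{1}=\{n\colon p_{n}=1\}$ and $\NN_{2}=\{n\colon p_{n}>1\}$; both $\MO_{(p_n)}$ and $\wMO_{(q_n)}$ decompose as complemented direct sums of the subspaces of sequences supported on $\NN_{1}$ and on $\NN_{2}$, so the duality may be proved on each piece separately.

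For the forward inclusion $\wMO_{(q_n)}\hookrightarrow\MO_{(p_n)}^{*}$, I would exploit Young's inequality $|a_{n}b_{n}|\le|a_{n}|^{p_n}/p_{n}+|b_{n}|^{q_n}/q_{n}$ on $\NN_{2}$: if $\|\alpha\|_{(p_n)}\le s$ and $\|\beta\|_{(q_n)}\le t$ then $m_{(p_n)}(\alpha/s)\le 1$ and $m_{(q_n)}(\beta/t)\le 1$, and the rescaling $|a_{n}b_{n}|\le st(|a_{n}/s|^{p_{n}}+|b_{n}/t|^{q_{n}})$ sums to $\sum_{n\in\NN_{2}}|a_{n}b_{n}|\le 2st$. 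On $\NN_{1}$, since $F_{n}(t)=t$, the restriction $\MO_{(p_n)}[\NN_{1}]$ is isometric to $\ell_{1}(\NN_{1})$; the hypothesis $\sum_{p_{n}=1}r^{q_n}<\infty$ together with Theorem~\ref{NakanoInftyIdentification} applied to the subsequence $(q_{n})_{n\in\NN_{1}}$ identifies $\wMO_{(q_n)}[\NN_{1}]$ with $\ell_{\infty}(\NN_{1})$ up to equivalent norms, so $\sum_{n\in\NN_{1}}|a_{n}b_{n}|\lesssim\|\alpha\|_{(p_n)}\|\beta\|_{(q_n)}$.

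Conversely, given $\phi\in\MO_{(p_n)}^{*}$, set $b_{n}=\phi(\ee_{n})$. Since $\|\ee_{n}\|_{(p_n)}=1$, one has $|b_{n}|\le\|\phi\|$, so $\beta|_{\NN_{1}}\in\ell_{\infty}(\NN_{1})=\wMO_{(q_n)}[\NN_{1}]$ with controlled norm. For the $\NN_{2}$-piece, test $\phi$ against the Young-extremal vectors
\[
\alpha_{F}=\sum_{n\in F}\mathrm{sgn}(b_{n})\bigl(|b_{n}|/\mu\bigr)^{q_{n}-1}\ee_{n}\qquad(F\subset\NN_{2}\ \text{finite})
\]
for a scaling parameter $\mu$ to be chosen; then $\phi(\alpha_{F})=\sum_{n\in F}|b_{n}|^{q_{n}}/\mu^{q_{n}-1}$, and estimating $\|\alpha_{F}\|_{(p_n)}$ through the modular (using Proposition~\ref{NakanoBoundedness} where convenient) forces $m_{(q_n)}(\beta/\mu)|_{\NN_{2}}$ to be finite for $\mu$ proportional to $\|\phi\|$, whence $\beta|_{\NN_{2}}\in\wMO_{(q_n)}[\NN_{2}]$.

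The main obstacle is keeping the Young-inequality constants on $\NN_{2}$ uniform in $n$ when the $p_{n}$ are unbounded or cluster near $1$: the Young conjugate of $t^{p_{n}}$ is $(p_{n}-1)p_{n}^{-q_{n}}s^{q_{n}}$, whose coefficient degenerates at both extremes, so the identification $\wMO_{(F_{n}^{*})}[\NN_{2}]=\wMO_{(q_n)}[\NN_{2}]$ is cleanest through the $\ell_{1}$-perturbation criterion of Theorem~\ref{MusielakInclusion} rather than a pointwise comparison of Orlicz functions. Alternatively, the whole statement is a specialization of the general Musielak--Orlicz duality \cite{Musielak1983}*{Theorem 13.11}, in which case the only remaining work is precisely this identification of conjugate Orlicz functions via Theorem~\ref{MusielakInclusion} together with Theorem~\ref{NakanoInftyIdentification} on the $\NN_{1}$-piece.
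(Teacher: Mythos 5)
The paper offers no proof of this theorem: it is imported verbatim from Nakano's 1950 monograph, with a pointer to the more general Musielak--Orlicz duality \cite{Musielak1983}*{Theorem 13.11}, so there is nothing internal to match your argument against. What you propose is a correct, self-contained proof along the standard lines, and the structure is right: the splitting $\NN=\NN_1\sqcup\NN_2$ according to $p_n=1$ or $p_n>1$, Young's inequality with the rescaling $|a_nb_n|\le st(|a_n/s|^{p_n}+|b_n/t|^{q_n})$ for the embedding $\wMO_{(q_n)}\hookrightarrow\MO_{(p_n)}^*$, the extremal vectors $\alpha_F$ for the converse, and the identification $\wMO_{(q_n)}[\NN_1]=\ell_\infty(\NN_1)$ via Theorem~\ref{NakanoInftyIdentification} as the exact point where the hypothesis $\sum_{p_n=1}r^{q_n}<\infty$ enters (it is also what makes the degenerate Legendre conjugate of $t\mapsto t$ harmless). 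Your closing worry about non-uniform Young constants is, in fact, a non-issue for the route you chose: since $1/p_n,1/q_n\le 1$ the inequality $|a||b|\le|a|^{p_n}+|b|^{q_n}$ already has uniform constants, and the constant $(p_n-1)p_n^{-q_n}$ only appears if one insists on passing through the exact conjugate functions $F_n^*$, which your direct argument avoids.

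Two small points to tighten. First, do not lean on Proposition~\ref{NakanoBoundedness} in the converse direction: it assumes $\sup_np_n<\infty$, which is not part of the hypotheses here. It is also unnecessary, because $p_n(q_n-1)=q_n$ gives the exact identity $m_{(p_n)}(\alpha_F)=\sum_{n\in F}(|b_n|/\mu)^{q_n}$, and whenever this quantity $M$ is $\ge 1$ one has $\Vert\alpha_F\Vert_{(p_n)}\le M$ (since $M^{-p_n}\le M^{-1}$), so $\mu M=\phi(\alpha_F)\le\Vert\phi\Vert M$ forces $\mu\le\Vert\phi\Vert$. Second, for \emph{equivalence of norms} you need more than finiteness of $m_{(q_n)}(\beta/\mu)$ on $\NN_2$: you need it $\le 1$ for $\mu$ comparable to $\Vert\phi\Vert$. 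Taking $\mu=2\Vert\phi\Vert$ (so that each term $(|b_n|/\mu)^{q_n}\le 2^{-q_n}\le 1/2$, which lets you select $F$ with $1\le M<\infty$ if the full modular exceeded $1$) the displayed contradiction yields exactly $m_{(q_n)}(\beta/\mu)\vert_{\NN_2}\le 1$, hence $\Vert\beta\vert_{\NN_2}\Vert_{(q_n)}\le 2\Vert\phi\Vert$. With these adjustments the argument is complete and quantitative.
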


We are now in a position to state and prove our first results about bases in Nakano spaces.

\begin{lemma}~\label{clusterPointDemocracyEstimate}Let $(p_n)_{n=1}^\infty$ be a sequence in $[1,\infty)$ and let $p$ be  a cluster point of $(p_n)_{n=1}^\infty$. Denote by $\EE$ the unit vector basis. Then, for all $N\in\NN$,
\[
\varphi_l[\EE,\MO_{(p_n)}](N)\le  N^{1/p} \le \varphi_u[\EE,\MO_{(p_n)}](N).
\]
\end{lemma}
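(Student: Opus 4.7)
The plan is to use Lemma~\ref{NormOneInequality} as a two-sided comparison tool, exploiting the fact that on any set where all the exponents $p_n$ lie in a narrow window around $p$, the Nakano norm of the characteristic function is sandwiched between two $\ell_q$-type norms whose values are explicit powers of $N$.

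First, I would fix $N\in\NN$ and, for each $\varepsilon>0$, use the hypothesis that $p$ is a cluster point of $(p_n)_{n=1}^\infty$ to pick a finite set $A=A_\varepsilon\subseteq\NN$ with $|A|=N$ and
\[
p_n\in[p^-,p^+]\quad\text{for every } n\in A,\qquad\text{where }p^-=\max\{1,p-\varepsilon\},\ p^+=p+\varepsilon.
\]
(The clipping to $[1,\infty)$ is the only adjustment needed in case $p=1$, and since $p\ge 1$ automatically, we have $p^-\to p$ and $p^+\to p$ as $\varepsilon\to 0^+$.)

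Next, I would apply Lemma~\ref{NormOneInequality} twice on the support $A$: once comparing the Nakano sequence restricted to $A$ with the constant sequence $p^+$ (giving the lower bound on the norm), and once with the constant sequence $p^-$ (giving the upper bound). Since the constant-exponent computation yields $\|\sum_{n\in A}\ee_n\|_{(p^\pm)}=N^{1/p^\pm}$, this produces
\[
N^{1/p^+}\;\le\;\Bigl\Vert\sum_{n\in A_\varepsilon}\ee_n\Bigr\Vert_{(p_n)}\;\le\;N^{1/p^-}.
\]

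Finally, I would read off both bounds from the definitions of $\varphi_u$ and $\varphi_l$. Since $|A_\varepsilon|=N$, the left-hand inequality gives
\[
\varphi_u[\EE,\MO_{(p_n)}](N)\;\ge\;\Bigl\Vert\sum_{n\in A_\varepsilon}\ee_n\Bigr\Vert_{(p_n)}\;\ge\;N^{1/p^+},
\]
and letting $\varepsilon\to 0^+$ yields $\varphi_u(N)\ge N^{1/p}$. Symmetrically, the right-hand inequality gives $\varphi_l[\EE,\MO_{(p_n)}](N)\le N^{1/p^-}$, and letting $\varepsilon\to 0^+$ yields $\varphi_l(N)\le N^{1/p}$. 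There is no real obstacle here; the only subtlety is the boundary case $p=1$, which is handled by the clipping $p^-=\max\{1,p-\varepsilon\}$, and the observation that the needed limits $N^{1/p^\pm}\to N^{1/p}$ hold uniformly.
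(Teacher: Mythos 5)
Your proof is correct and follows essentially the same route as the paper's: choose $N$ indices whose exponents lie in a small window around $p$, sandwich the Nakano norm of the corresponding sum between the two constant-exponent norms via Lemma~\ref{NormOneInequality}, and let the window shrink. Your explicit clipping $p^-=\max\{1,p-\varepsilon\}$ even handles the boundary case $p=1$ a bit more carefully than the paper's ``let $r<p<s$'' phrasing.
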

\begin{proof}Fix $N\in\NN$.
Let $r<p<s$. There is $A\subseteq\NN$ such that $|A|=N$ and $r<p_n<s$ for all $n\in A$. By Lemma~\ref{NormOneInequality},
\[
 N^{1/s}=  \left\Vert \sum_{n\in A} \ee_n\right\Vert_{s}  \le \left\Vert \sum_{n\in A} \ee_n\right\Vert_{(p_n)}  \le  \left\Vert \sum_{n\in A} \ee_n\right\Vert_{r}= N^{1/r}.
 \]
 Hence,
  \[
  \varphi_l[\EE,\MO_{(p_n)}](N) \le N^{1/r}\quad \text{and}\quad   \quad N^{1/s} \le  \varphi_u[\EE,\MO_{(p_n)}](N).
 \]
 Choosing $r$ and $s$ arbitrarily close to $p$ we get the desired result.
\end{proof}

\begin{theorem}\label{uvbNakanoGreedy}Let $(p_n)_{n=1}^\infty$ be a sequence in $[1,\infty)$. Then the unit vector basis is a greedy basis for the Nakano space 
$\MO_{(p_n)}$ if and only if there is $p\in[1,\infty]$ such that
 $\MO_{(p_n)}=\ell_p$, i.e., if and only if the  unit vector basis  of $\MO_{(p_n)}$ is equivalent to the  unit vector basis   of $\ell_p$ ($c_0$ if $p=\infty$), in which case $\lim_n p_n=p$.
\end{theorem}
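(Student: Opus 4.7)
My plan for $(\Leftarrow)$: if $\MO_{(p_n)} = \ell_p$ (or $c_0$ if $p=\infty$) with equivalent norms, the identity on $c_{00}$ extends to an isomorphism, so the unit vector basis of $\MO_{(p_n)}$ is equivalent to the canonical basis of $\ell_p$ (or $c_0$), which is symmetric, hence greedy. For $(\Rightarrow)$, I assume the unit vector basis $\EE$ is greedy, so democratic: $\varphi_u[\EE, \MO_{(p_n)}](N) \approx \varphi_l[\EE, \MO_{(p_n)}](N)$. The first step is to show $p_n \to p$ for a unique $p \in [1, \infty]$. By Lemma~\ref{clusterPointDemocracyEstimate}, every finite cluster point $q$ of $(p_n)$ satisfies $\varphi_l(N) \le N^{1/q} \le \varphi_u(N)$; and if $(p_n)$ is unbounded, then for each $s > 1$ and each $N$ I can select $N$ indices from $\{n : p_n > s\}$ and apply Lemma~\ref{NormOneInequality} to obtain $\varphi_l(N) \le N^{1/s}$, hence $\varphi_l(N) \le 1$. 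Two distinct cluster values (finite or infinite) would then render $\varphi_u/\varphi_l$ unbounded, contradicting democracy.

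Next, suppose $p < \infty$ and split $\NN$ into $A^+ = \{n : p_n > p\}$ and $A^- = \{n : p_n < p\}$. If $A^+$ is infinite, let $A_N^+$ consist of the $N$ indices of $A^+$ where $p_n$ is largest, and set $\delta_N^+ := \min_{n \in A_N^+}(p_n - p)$. Lemma~\ref{NormOneInequality} yields
\[
\left\Vert \sum_{n \in A_N^+} \ee_n \right\Vert_{(p_n)} \le N^{1/(p + \delta_N^+)},
\]
so $\varphi_l(N) \le N^{1/(p+\delta_N^+)}$. Combined with $\varphi_u(N) \ge N^{1/p}$ from Lemma~\ref{clusterPointDemocracyEstimate}, democracy forces $\delta_N^+ \log N = O(1)$, i.e., $\delta_N^+ = O(1/\log N)$. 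A symmetric argument on $A^-$ (Lemma~\ref{NormOneInequality} now supplies $\varphi_u(N) \ge N^{1/(p - \delta_N^-)}$ while $\varphi_l(N) \le N^{1/p}$) gives $\delta_N^- = O(1/\log N)$. A counting argument then shows the decreasing rearrangement $c_{(k)}$ of $(|p_n - p|)_{p_n \ne p}$ satisfies $c_{(k)} = O(1/\log k)$, and since $p_n p / |p_n - p| = p^2/|p_n - p| \pm p$, the summability condition in Corollary~\ref{NakanoIdentification} reduces to $\sum_k r^{p^2/c_{(k)}} < \infty$, which holds for any $r < e^{-C/p^2}$ when $c_{(k)} \le C/\log k$. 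Hence $\MO_{(p_n)} = \ell_p$.

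For $p = \infty$, I already have $\varphi_l(N) \le 1$ from the cluster-point step, so democracy forces $\sup_N \varphi_u(N) < \infty$; unconditionality and semi-normalization then produce $\Vert \alpha \Vert_{(p_n)} \approx \Vert \alpha \Vert_\infty$ on $c_{00}$, giving $\MO_{(p_n)} = c_0$ (equivalently, by Theorem~\ref{NakanoInftyIdentification}, $\sum_n r^{p_n} < \infty$ for some $r < 1$). I expect the hardest part to be the middle paragraph: converting the two-sided democracy estimates $\delta_N^\pm \log N = O(1)$ into the precise Musielak-Orlicz summability condition of Corollary~\ref{NakanoIdentification}, and verifying that the merged decreasing rearrangement $c_{(k)}$ inherits the $O(1/\log k)$ decay from both halves $A^\pm$.
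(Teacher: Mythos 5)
Your argument is correct, and it rests on the same quantitative core as the paper's proof --- democracy combined with Lemma~\ref{clusterPointDemocracyEstimate} and Lemma~\ref{NormOneInequality} forces the deviations $|p_n-p|$ to decay like $1/\log(\mathrm{rank})$, after which Corollary~\ref{NakanoIdentification} (or Theorem~\ref{NakanoInftyIdentification} when $p=\infty$) closes the argument --- but your route through that core is organized differently. The paper splits into five cases according to the monotonicity of $(p_n)_{n=1}^\infty$: for monotone sequences $\varphi_l[\EE,\MO_{(p_n)}](N)$ is attained exactly at $\{1,\dots,N\}$, Proposition~\ref{NakanoBoundedness} converts the norm lower bound into the modular inequality $\sum_{n=1}^N N^{-p_n/p}\ge c$, hence $N^{1-p_N/p}\ge c$; the general convergent case is then reduced to the two monotone ones by splitting $\NN$ into $\{n: p_n\le p\}$ and $\{n: p_n> p\}$ and re-indexing each half monotonically. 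You instead test democracy directly against the $N$ most extreme indices on each side of $p$, which replaces the modular step by a second application of Lemma~\ref{NormOneInequality} and makes the monotone-reduction of the paper's Case 3 unnecessary, at the price of the explicit merging argument for the rearranged deviations $c_{(k)}$ (which the paper avoids because its monotone normal form already orders them). The limit cases ($p=\infty$ and no limit) are handled identically in both proofs. When writing yours up, make explicit that $\delta_N^{\pm}$ is bounded (since $p_n\to p$), which is what lets you pass from $\bigl(\tfrac1p-\tfrac1{p+\delta_N^{+}}\bigr)\log N\le \log\Delta$ to $\delta_N^{+}\lesssim 1/\log N$, and that indices with $p_n=p$ contribute nothing to the sum in Corollary~\ref{NakanoIdentification}.
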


\begin{proof}Suppose that the unit vector basis $\EE=(\ee_n)_{n=1}^\infty$ is a greedy basis of $\MO_{(p_n)}$. Then, in particular,  $\EE$   is democratic.  

\smallskip 
\underline{Case 1:}  The sequence $(p_n)_{n=1}^\infty$ decreases to $p\in[1,\infty)$. 

Let $N\in\NN$.  By Lemma~\ref{NormOneInequality}.
 \[
 \left\Vert \sum_{n=1}^N \ee_n\right\Vert_{(p_n)}  \le  \varphi_l[\EE,\MO_{(p_n)}](N)\le \varphi_u[\EE,\MO_{(p_n)}](N) \le
 \left\Vert \sum_{n=1}^N \ee_n\right\Vert_{p}
 = N^{1/p}.
 \]
 
 Hence,  by Lemma~\ref{clusterPointDemocracyEstimate},  
 \[
 \varphi_l[\EE,\MO_{(p_n)}](N)  = \left\Vert \sum_{n=1}^N \ee_n\right\Vert_{(p_n)}\quad \text{and}\quad    \varphi_u[\EE,\MO_{(p_n)}](N)=N^{1/p}.
 \]

Since  $\EE$  is  democratic,
\[
\inf_N \frac{  \varphi_l[\EE,\MO_{(p_n)}](N)   }{   \varphi_u  [\EE,\MO_{(p_n)}](N) } =\inf_{N\in\NN} \left\{  N^{-1/p} \left \Vert \sum_{n=1}^N \ee_n \right\Vert_{(p_n)}  \tq N\in\NN \right\}>0.
\]
By Lemma~\ref{NakanoBoundedness}, there is $0<c<1$ such that
\[
 \sum_{n=1}^N  N^{-p_n/p}\ge c, \quad N=1,2,\dots
\]
Therefore, since $p_n\ge p_N$ for $n\le N$,
\[
N^{1-p_N/p}\ge c, \quad N=1,2,\dots
\]
Hence,
\[
\frac{1}{p}-\frac{1}{p_N} \le \frac{p_N}{p}-1\le \log\left(\frac{1}{c}\right) \frac{1}{\log(N)},  \quad N=1,2,\dots
\]
If $0<r<c$, 
\[
\sum_{N=1}^\infty r^{p p_N/(p_N-p)}\le\sum_{N=1}^\infty N^{-\log (r)/log(c)}<\infty.
\]
By Theorem~\ref{NakanoIdentification}, $\MO_{(p_n)}=\ell_p$.

\smallskip 
\underline{Case 2:} The sequence $(p_n)_{n=1}^\infty$ increases to $p\in[1,\infty)$.

It is similar to  Case 1 and we leave   the details  for the reader.


\smallskip 
\underline{Case 3:}  The sequence $(p_n)_{n=1}^\infty$ converges to $p\in[1,\infty)$.  

Consider $A_1=\{n\in\NN \tq p_n\le p\}$ and $A_2=\{ n\in\NN \tq p_n>p\}$.
Denote $\NN_j=\{ n\in\NN \tq n \le |A_j|\}$ 
 ($j=1,2$).
There is an increasing bijection from $\NN_1$ onto  $A_1$.  
Appealing to the Case 2  we obtain $\MO_{(p_n)}[A_1]=\ell_p[A_1]$.   Similarly, there is a   decreasing bijection from  $\NN_2$ onto $A_2$. Appealing to the case (a)   we obtain $\MO_{(p_n)}[A_2]=\ell_p[A_2]$. Hence $\MO_{(p_n)}=\MO_{(p_n)}[A_1]\oplus \MO_{(p_n)}[A_2]=\ell_p[A_1]\oplus \ell_p[A_2]=\ell_p$.

\smallskip 
\underline{Case 4:}  $\lim_n p_n=\infty$. 

Combining the democracy of the $\EE$ with  Lemma~\ref{clusterPointDemocracyEstimate},  we obtain  that  $\varphi_u[\EE,\MO_{(p_n)}](N)\lesssim 1$ for all $N\in\NN$.
Taking into account that $\EE$  is an unconditional basis, we get   $\MO_{(p_n)}=c_0$.
 
  \smallskip 
\underline{Case 5:} The sequence $(p_n)_{n=1}^\infty$  has no limit. 

Denote $p_1=\liminf_n p_n<p_2=\limsup_n p_n$. Combining the democracy of $\EE$ with Lemma~\ref{clusterPointDemocracyEstimate} we get $N^{1/p_1} \lesssim N^{1/p_2}$ for $N\in\NN$, an absurdity.
\end{proof}

\begin{remark}The proof of the above theorem gives  that for a monotone sequence $(p_n)_{n=1}^\infty$ converging to $p<\infty$,   one has
$\ell_{(p_n)}=\ell_p$ if and only if
\[
\sup_{n\in\NN} \log(n) |p_n-p| <\infty.
\]
A similar result was  obtained by  Simmons \cite{Simmons1965}  in the non-locally convex setting. 
\end{remark}

\begin{theorem}\label{GreedyNakano}
Let $(p_n)_{n=1}^\infty$ be a  sequence in $[1,\infty)$ with  $\lim_n p_n =1$.
\begin{itemize} 
\item[(a)] Any complemented greedy basic sequence in $\wMO_{(p_n)}$ is equivalent to the unit vector basis of $\ell_1$. 
\item[(b)] A complemented subspace of $\wMO_{(p_n)}$ has a greedy basis if and only if it is isomorphic to $\ell_1$.
\end{itemize}
\end{theorem}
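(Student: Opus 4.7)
The plan is to prove (a) and then deduce (b) from it. For (b), the space $\wMO_{(p_n)}$ always contains a complemented copy of $\ell_1$: pick any subsequence $(n_k)$ along which $p_{n_k}\to 1$ fast enough that $\sum_k r^{p_{n_k}/(p_{n_k}-1)}<\infty$ for some $r<1$ (possible because $\lim_n p_n=1$); Corollary~\ref{NakanoIdentification} then identifies $\wMO_{(p_{n_k})}$ with $\ell_1$, and $\wMO_{(p_n)}[\{n_k\}]$ is $1$-complemented by the support projection. Since $\ell_1$ has a greedy basis, one direction of (b) is settled. For the converse, a complemented subspace carrying a greedy basis produces via (a) a basis equivalent to the unit vector basis of $\ell_1$, forcing the subspace to be $\approx\ell_1$. (Uniqueness of the greedy basis in $\ell_1$ follows from the Lindenstrauss--Pe{\l}czy{\'n}ski theorem together with the symmetry of the $\ell_1$-basis.)

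Now let $\BB=(\xx_n)_{n=1}^\infty$ be a complemented greedy basic sequence in $\wMO_{(p_n)}$, and write $P$ for a bounded projection of $\wMO_{(p_n)}$ onto $[\xx_n]$. Since $\lim_n p_n=1$, all but finitely many $p_n$ lie in $[1,2]$, so after discarding a finite-dimensional complement we may assume $\sup_n p_n<\infty$; by Theorem~\ref{NakanoDensity} this gives $\wMO_{(p_n)}=\MO_{(p_n)}$, a separable Banach lattice. After permuting indices (an isometric automorphism) we may further assume $p_n\downarrow 1$, so that $\MO_{(p_n)}$ is right-dominant by Theorem~\ref{RightDom}.

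The crux is to show that the fundamental function of $\BB$ satisfies $\varphi[\BB,\MO_{(p_n)}](N)\approx N$. The upper estimate $\varphi_u[\BB](N)\le CN$ is immediate from the triangle inequality and semi-normalization. For the lower bound $\varphi_l[\BB](N)\gtrsim N$, I would exploit the near-$\ell_1$ geometry of the tail of $\MO_{(p_n)}$: any normalized disjoint block sequence $(u_k)$ of $(\ee_n)$ whose supports $\supp(u_k)$ escape to infinity enjoys $p_n\to 1$ uniformly on $\supp(u_k)$, and so Lemma~\ref{NormOneInequality} combined with the modular/norm equivalence of Proposition~\ref{NakanoBoundedness} forces $(u_k)$ to be equivalent to the unit vector basis of $\ell_1$. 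A Bessaga--Pe{\l}czy{\'n}ski extraction argument, in which $P$ is used to control the complemented sequence $\BB$, produces a subsequence of $\BB$ equivalent (after a small perturbation) to such a disjoint block sequence; the democracy of $\BB$ then propagates this lower estimate from the subsequence to every finite index set, yielding $\varphi_l[\BB](N)\gtrsim N$. With $\varphi[\BB](N)\approx N$ in hand, a layer-cake decomposition of a finitely supported vector $\sum a_n\xx_n$ along the dyadic level sets of $(|a_n|\,\|\xx_n\|)$, together with unconditionality, democracy, and right-dominance, promotes the democracy estimate to the full $\ell_1$-equivalence $\|\sum a_n\xx_n\|\approx\sum|a_n|$, whence $\BB$ is equivalent to the unit vector basis of $\ell_1$.

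The principal obstacle is the lower estimate $\varphi_l[\BB](N)\gtrsim N$: the ambient space $\MO_{(p_n)}$ need not be isomorphic to $\ell_1$, so one cannot invoke a cotype-1 argument globally. Instead one must use the projection $P$ and the greedy (in particular democratic) structure of $\BB$ to force the finite sums $\sum_{n\in A}\xx_n$ to inherit the $\ell_1$-like behavior of disjoint blocks living in the tail, which is exactly what the preparatory machinery on right-dominance, Nakano duality, and modular boundedness developed in Sections~\ref{Prelim} and~\ref{Nakano} is engineered to deliver.
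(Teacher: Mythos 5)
There is a genuine gap, and it sits exactly at the step you yourself identify as the crux. Your route to the lower estimate $\varphi_l[\BB,\MO_{(p_n)}](N)\gtrsim N$ rests on the claim that any normalized disjointly supported sequence $(u_k)$ in $\MO_{(p_n)}$ whose supports escape to infinity is equivalent to the unit vector basis of $\ell_1$ because $p_n\to 1$ uniformly on the supports. This is false: the unit vector basis $(\ee_n)$ itself is such a sequence, and by Corollary~\ref{NakanoIdentification} it is equivalent to the $\ell_1$ basis only when $\sum_n r^{p_n/(p_n-1)}<\infty$ for some $r<1$; the rate at which $p_n\to 1$ matters, not merely the convergence. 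Indeed Corollary~\ref{UniqueGreedyNakano} is precisely about spaces where this summability fails, so your argument would ``prove'' that those spaces equal $\ell_1$. Lemma~\ref{NormOneInequality} only gives the upper comparison with $\ell_1$ (the trivial triangle inequality); the lower comparison requires quantitative control of $p_n-1$ against the block lengths, which a Bessaga--Pe{\l}czy{\'n}ski extraction does not provide since the supports and lengths of the blocks are fixed before you can thin out. A second, smaller issue: even granting $\varphi_u[\BB,\MO_{(p_n)}](N)\approx\varphi_l[\BB,\MO_{(p_n)}](N)\approx N$, the ``layer-cake'' promotion to the full equivalence $\Vert\sum a_n\xx_n\Vert\approx\sum|a_n|$ is asserted rather than proved; unconditionality plus a linear lower democracy function directly yields only a weak-$\ell_1$ lower bound (compare Lemma~\ref{DemocracyEmbedding}(a)), and upgrading it is a nontrivial step that would need an argument or a citation.

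The paper's proof runs along entirely different lines and is worth contrasting with your plan. It first shows $\wMO_{(p_n)}$ is anti-Euclidean (since $\ell_1$ is disjointly finitely representable in it), so that the Casazza--Kalton structure theorems apply: a complemented unconditional basic sequence in such a lattice is equivalent to a complemented block basis of $\wMO_{(p_n)}^M$, and right-dominance (Theorem~\ref{RightDom}) then forces it to be permutatively equivalent to a \emph{subsequence of the unit vector basis} of a Nakano space. Only at that point does greediness enter, via Theorem~\ref{uvbNakanoGreedy}, which converts democracy of the unit vector basis into the summability condition identifying the space with $\ell_1$. In other words, complementation is used in an essential structural way (your projection $P$ never actually does comparable work), and the $\ell_1$-identification comes from democracy through Theorem~\ref{uvbNakanoGreedy}, not from the geometry of arbitrary disjoint blocks in the tail. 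Your treatment of part (b) and the initial reductions (discarding finitely many coordinates, permuting to make $(p_n)$ decreasing) are fine and match the paper.
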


\begin{proof} Part (b) follows readily  from (a).  To prove (a), let us assume without loss of generality that $p_n \searrow 1$.
 Then, by Theorem~\ref{RightDom}, $\wMO_{(p_n)}$ is right-dominant.  Theorem~\ref{NakanoIdentification}  yields that the unit vector basis of  $\wMO_{(p_n)}$ has a subsequence equivalent to the unit vector basis of $\ell_1$. In particular, $\ell_1$ is disjointly finitely representable in $\wMO_{(p_n)}$. Taking into account
  \cite{CasazzaKalton1998}*{Proposition 5.3}, we obtain that $\wMO_{(p_n)}$ is anti-Euclidean.

Let $\BB$ be a complemented greedy basis in $\wMO_{(p_n)}$. Then $\BB$ is lattice anti-Euclidean.
By  \cite{CasazzaKalton1998}*{Theorem 3.5}, $\BB$ is  equivalent to a  complemented block basis of the unit vector basis of  $\wMO_{(p_n)}^M$  for some $M\in\NN$.

Pick $(q_n)_{n=1}^\infty$ such that $q_n=p_m$ whenever $(m-1)M+1\le n \le mM$. 
Notice that $\wMO_{(p_n)}^M=\wMO_{(q_n)}$. In particular,  $\wMO_{(p_n)}^M$ is a right-dominant sequence space. Appealing to 
 \cite{CasazzaKalton1998}*{Theorem 5.6} we get that $\BB$ is  permutatively equivalent to a subsequence of the unit vector basis of $\wMO_{(q_n)}$.
Therefore, there is an injective mapping $\pi\colon\NN\to \NN$ such that $\BB$ is equivalent to  the unit vector basis of the  Nakano space
$\wMO_{(q_{\pi(n)})}$.
Hence, by
Theorem~\ref{uvbNakanoGreedy}, $\BB$ is equivalent to the unit vector basis of $\ell_1$.
\end{proof}

\begin{corollary}\label{UniqueGreedyNakano}
Let $(p_n)_{n=1}^\infty$ be a sequence in $[1,\infty)$ such that  $\lim_n p_n =1$ and
\[
\sum_{n=1}^\infty r^{1/|p_n-1|}=\infty \text{ for all } 0<r<1.
\]
Then $\wMO_{(p_n)}$ does not have a greedy basis.
\end{corollary}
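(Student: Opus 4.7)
The plan is to argue by contradiction, leveraging the classification provided by Theorem~\ref{GreedyNakano}(b). Since $\wMO_{(p_n)}$ is trivially a complemented subspace of itself, if it admitted a greedy basis then part (b) of Theorem~\ref{GreedyNakano} would force $\wMO_{(p_n)} \approx \ell_1$. Hence it suffices to show that, under the hypothesis $\sum_n r^{1/|p_n-1|}=\infty$ for every $0<r<1$, the space $\wMO_{(p_n)}$ cannot be isomorphic to $\ell_1$.

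To rule out this isomorphism, I would view $\ell_1$ as the Nakano space $\wMO_{(q_n)}$ with the constant sequence $q_n\equiv 1$, and apply the identification criterion Corollary~\ref{NakanoIdentification}. If $\wMO_{(p_n)}=\ell_1$ with equivalent norms, the corollary would supply some $0<r<1$ with
\[
\sum_{n=1}^\infty r^{p_n/|p_n-1|}<\infty,
\]
with the usual convention that the summand is $0$ whenever $p_n=1$ (i.e.\ $p_nq_n/|p_n-q_n|=\infty$).

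The remaining step is to convert this into a contradiction with the hypothesis. Since $\lim_n p_n=1$, there exists $N$ such that $1\le p_n\le 2$ for every $n\ge N$. For such $n$ with $p_n\neq 1$, the trivial estimate $p_n/|p_n-1|\le 2/|p_n-1|$ yields
\[
r^{p_n/|p_n-1|}\ge r^{2/|p_n-1|}=(r^2)^{1/|p_n-1|}.
\]
Therefore $\sum_{n\ge N}(r^2)^{1/|p_n-1|}\le \sum_{n\ge N} r^{p_n/|p_n-1|}<\infty$, and adding the finitely many initial terms gives $\sum_{n=1}^\infty (r^2)^{1/|p_n-1|}<\infty$. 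Since $r^2\in(0,1)$, this directly contradicts the standing assumption.

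There is really no serious obstacle here: once Theorem~\ref{GreedyNakano}(b) and Corollary~\ref{NakanoIdentification} are invoked, the whole matter reduces to an elementary comparison of exponents. The only point that requires attention is the direction of the inequality and the passage from the parameter $r$ supplied by the identification theorem to a parameter $r^2$ inside the hypothesis, together with the harmless bookkeeping for the indices $n$ with $p_n=1$ or $p_n>2$ (which form only an initial segment).
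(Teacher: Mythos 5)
Your overall strategy coincides with the paper's: assume a greedy basis exists, invoke Theorem~\ref{GreedyNakano}(b) to conclude $\wMO_{(p_n)}\approx\ell_1$, and then contradict the identification criterion for Nakano spaces. The exponent comparison at the end is correct and in fact more explicit than what the paper records: from $\sum_n r^{p_n/|p_n-1|}<\infty$ and $1\le p_n\le 2$ for $n$ large, you correctly deduce $\sum_n (r^2)^{1/|p_n-1|}<\infty$, contradicting the hypothesis. (The indices with $p_n=1$ contribute $0$ to both sums, and those with $p_n>2$ are finitely many because $p_n\to 1$; they need not form an initial segment, but that is immaterial.)

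There is, however, one genuine gap in the middle of your argument. Theorem~\ref{GreedyNakano}(b) gives only that $\wMO_{(p_n)}$ is \emph{isomorphic} to $\ell_1$ as a Banach space, whereas Corollary~\ref{NakanoIdentification} characterizes when $\wMO_{(p_n)}$ \emph{equals} $\wMO_{(q_n)}$ with equivalent norms, i.e., when the unit vector bases are equivalent. You announce that it suffices to rule out the isomorphism, but your application of Corollary~\ref{NakanoIdentification} rules out only the (a priori stronger) norm-equality; an abstract isomorphism $\wMO_{(p_n)}\approx\ell_1$ says nothing by itself about the unit vector basis of $\wMO_{(p_n)}$. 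The missing step --- which is exactly the one the paper supplies --- is the Lindenstrauss--Pe{\l}czy{\'n}ski theorem that $\ell_1$ has a unique unconditional basis: since the unit vector basis of $\wMO_{(p_n)}$ is unconditional and the space is isomorphic to $\ell_1$, that basis must be equivalent to the unit vector basis of $\ell_1$, hence $\wMO_{(p_n)}=\ell_1$ with equivalent norms, and only then does Corollary~\ref{NakanoIdentification} apply. With that one sentence inserted, your proof is complete and agrees with the paper's.
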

\begin{proof} 
Suppose that $\BB$ is a  greedy basis of $\wMO_{(p_n)}$. By Theorem~\ref{GreedyNakano}, $\wMO_{(p_n)} \approx \ell_1$.
Since $\ell_1$ has a unique unconditional basis (cf.\ \cite{ LindenstraussPel1968}), $\wMO_{(p_n)}=\ell_1$, in contradiction with
Theorem~\ref{NakanoIdentification}. 
\end{proof}

Next we obtain analogous results  to Theorem~\ref{GreedyNakano} and Corollary~\ref{UniqueGreedyNakano}  for the dual case, i.e., when $\lim_n p_n=\infty$. Although we could use similar techniques in their proofs, we will get more with simpler techniques.

\begin{theorem}Let $(p_n)_{n=1}^\infty$ be a  sequence in $[1,\infty)$ such that  $\lim_n p_n =\infty$.
\begin{enumerate} 
\item[(a)] Any  greedy basic sequence in $\MO_{(p_n)}$ is equivalent to the unit vector basis of $c_0$. 
\item[(b)] A subspace of $\MO_{(p_n)}$ has a greedy basis if and only if it is isomorphic to $c_0$.
\item[(c)]  The space $\MO_{(p_n)}$ has a greedy basis if and only if  $\MO_{(p_n)}=c_0$.
\end{enumerate}
\end{theorem}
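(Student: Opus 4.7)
Parts (b) and (c) are immediate consequences of (a). For (b), a greedy basis of a subspace $Z\subseteq\MO_{(p_n)}$ is in particular a greedy basic sequence in $\MO_{(p_n)}$, hence by (a) equivalent to the unit vector basis of $c_0$, so $Z\approx c_0$; the converse is trivial. For (c), a greedy basis of $\MO_{(p_n)}$ gives $\MO_{(p_n)}\approx c_0$ by (b), and the uniqueness of the unconditional basis of $c_0$ \cite{LindenstraussPel1968} then forces the unit vector basis of $\MO_{(p_n)}$ to be equivalent to the $c_0$-basis, i.e., $\MO_{(p_n)}=c_0$.

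The whole theorem thus hinges on (a). Let $\BB=(\xx_k)_{k=1}^\infty$ be a greedy basic sequence in $\MO_{(p_n)}$ with closed linear span $\YY$. Semi-normalization and unconditionality of $\BB$ make the estimate $\|\sum_k a_k\xx_k\|\gtrsim\max_k|a_k|$ automatic, and they reduce the reverse estimate to establishing the uniform bound $\varphi_u[\BB,\YY](N)\lesssim 1$, since by unconditionality $\|\sum_k a_k\xx_k\|\le K\max_k|a_k|\cdot\varphi_u[\BB,\YY](|\supp a|)$. My plan is to extract from $\BB$ a subsequence equivalent to the $c_0$-basis, and then invoke the democracy of $\BB$ to globalize this bound.

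To produce the subsequence, I first note that $\MO_{(p_n)}$ does not contain $\ell_1$. Because $p_n\to\infty$, only finitely many indices satisfy $p_n\le 2$, so on a cofinite set $\inf p_n>1$; Theorem~\ref{NakanoDuality} then identifies the dual of the corresponding Nakano space with the separable space $\MO_{(q_n)}$ (with $q_n=p_n/(p_n-1)$ bounded), the unconditional unit vector basis is thereby shrinking, and James' theorem forbids any $\ell_1$-copy. Consequently $\BB$ is weakly null, and Bessaga--Pe{\l}czy{\'n}ski yields a subsequence of $\BB$ equivalent to a semi-normalized block basic sequence $(y_j)$ of $(\ee_n)$ with disjoint supports $T_j$ arranged so that $\max T_j<\min T_{j+1}$. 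Since $p_n\to\infty$ the quantities $\alpha_j:=\inf_{n\in T_j}p_n$ diverge to $\infty$, and for every $C>1$ and every finitely supported $(c_j)$ with $M:=\max_j|c_j|$ the Luxemburg inequality $m_{(p_n)}(y_j)\le 1$ together with $|c_j|/M\le 1$ gives
\[
m_{(p_n)}\!\left(\frac{1}{CM}\sum_j c_j y_j\right)\le\sum_j C^{-\alpha_j}\,m_{(p_n)}(y_j)\le\sum_j C^{-\alpha_j}.
\]
Thinning $(y_j)$ so that the right-hand side is $\le 1$ produces a subsequence $C$-equivalent to the $c_0$-basis, and pulling back yields a subsequence $(\xx_{k_i})$ of $\BB$ with $\sup_N\|\sum_{i=1}^N\xx_{k_i}\|<\infty$. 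Democracy now closes the argument: $\varphi_u[\BB,\YY](N)\le\Delta\,\varphi_l[\BB,\YY](N)\le\Delta\,\|\sum_{i=1}^N\xx_{k_i}\|\lesssim 1$.

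The main obstacle is the modular estimate displayed above: it is the single step where $p_n\to\infty$ is exploited quantitatively, and one must verify that the thinning of $(y_j)$ depends only on $(y_j)$ and $C$ so that a single subsequence works uniformly in $(c_j)$. Everything else---the absence of $\ell_1$, Bessaga--Pe{\l}czy{\'n}ski selection, the democracy upgrade, and the passage from $\varphi_u\lesssim 1$ to $c_0$-equivalence---is standard.
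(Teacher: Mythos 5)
Your proposal is correct and follows essentially the same route as the paper: separability of the dual (via Theorems~\ref{NakanoDuality} and~\ref{NakanoDensity}) gives that the greedy basic sequence is weakly null, Bessaga--Pe{\l}czy{\'n}ski passes to a block basic sequence, a subsequence of that block basis is shown to be dominated by the $c_0$-basis by exploiting $p_n\to\infty$, and democracy plus unconditionality then upgrade $\varphi_u\lesssim 1$ to full $c_0$-equivalence. The only cosmetic difference is that you carry out the modular estimate by hand, whereas the paper extracts a subsequence with $\sum_k r^{q_k}<\infty$ for $q_k=\min_{n\in\supp \xx_k}p_n$ and invokes Theorem~\ref{NakanoInftyIdentification} together with Lemma~\ref{NormOneInequality} to get the same domination.
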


\begin{proof} (a) Let $\BB$ be a greedy basis sequence in $\MO_{(p_n)}$. Since the dual space of $\MO_{(p_n)}$ is separable (see Theorem~\ref{NakanoDuality} and
 Theorem~\ref{NakanoDensity}), $\BB$ is  a weakly null sequence (see
\cite{AlbiacKalton2006}*{Proposition 3.2.7} and \cite{AlbiacKalton2006}*{Theorem 3.3.1}). 
Applying Bessaga-Pe{\l}czy{\'n}ski Selection Principle (cf. \cite{AlbiacKalton2006}*{Proposition 1.3.10}) we get a normalized block basis of the the unit vector basis of $\MO_{(p_n)}$, say  $\BB_0=(\xx_k)_{k=1}^\infty$, equivalent to  a subbasis of $\BB$. 
Denote $A_k=\supp \xx_k$ and pick $n_k\in A_k$ such that $q_k:=p_{n_k}\le p_n$ for $n\in A_k$. Passing to a subsequence, if necessary, we can suppose that
$\sum_{k=1}^\infty  r^{q_k}<\infty$ for some $0<r<1$. By Theorem~\ref{NakanoInftyIdentification}, $\MO_{(q_k)}=c_0$. Then
\[
\left\Vert \sum_{k=1}^\infty a_k \xx_k \right\Vert_{(p_n)}  \le \left\Vert \sum_{k=1}^\infty a_k \ee_k \right\Vert_{(q_k)} \approx 
 \left\Vert \sum_{k=1}^\infty a_k \ee_k \right\Vert_\infty,
 \]
 for all $(a_n)_{n=1}^\infty\in c_{00}$. Therefore,
 \[
 \varphi_u[\BB, \MO_{(p_n)}] (N)  \approx  \varphi_u[\BB_0, \MO_{(p_n)}] (N) \lesssim \varphi_u[\EE, c_0] (N)=1.
 \]
 From here, taking into account that $\BB$ is an unconditional basis, we get readily that $\BB$ is equivalent to the unit vector basis of $c_0$.

Part (b) is an easy consequence of (a) and Part (c) follows from (b) and the uniqueness of unconditional basis in $c_0$.
\end{proof}

\section{Uniqueness of greedy basis in Orlicz sequence spaces}\label{Orlicz}

\noindent Orlicz sequence spaces can be seen as a particular case of Musielak-Orlicz sequence spaces. Indeed, we just need to consider a sequence $(F_n)_{n=1}^\infty$ such that $F_n=F$ for all $n$ and some convex Orlicz function.
We put  $\wMO_{(F_{n})}=\wMO_F$ and  $\MO_{(F_{n})}=\MO_F$. The identification between
$\wMO_F$ and $\MO_F$  is simplier than  for Musielak-Orlicz sequence spaces: $\wMO_F=\MO_F$ if and only if $F$ satisfies the $\Delta_2$ condition  at the origin, i.e., there exist constants $a\in(0,\infty)$ and $C\in(1,\infty)$  such  that  $F(2t)   \le C F(t)$ for $t\in[0,a]$.
Notice that   Orlicz spaces only depend, up to an equivalent norm,  of the behavior of the functions defining them at a neighborhood of the origin. To be precise,  $\wMO_{F}=\wMO_{G}$ if and only if there exist positive constants $a$ and $b$ such that 
$F(b t)\approx G(t)$ for all $t\in[0,a]$.

In an Orlicz space $\MO_F$,  the unit vector basis is a $1$-symmetric basis. In particular, it is a greedy basis. 
Its democracy functions $\varphi_l[\EE,\MO_F](N)=\varphi_u[\EE,\MO_F](N)=D_N$ are determined by the formula
\[
F\left( \frac{1}{D_N}\right)=\frac{1}{N}, \quad N=1,2,\dots
\]
From here it is easy to obtain the following result.
\begin{lemma}~\label{SameDemocracyFunction} Let $F$ and $G$ be two  convex Orlicz functions.  Then
 $\MO_F=\MO_G$ (with equivalence of norms) if and only if $ \varphi_u[\EE,\MO_F](N)\approx \varphi_u[\EE,\MO_G](N)$ for all $N\in \mathbb N$.
\end{lemma}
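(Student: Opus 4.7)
The forward implication is immediate from the definitions: if $\MO_F = \MO_G$ with equivalent norms, then the two Orlicz norms are equivalent on $c_{00}$, so $\Vert \sum_{n \in A} \ee_n \Vert_F \approx \Vert \sum_{n \in A} \ee_n \Vert_G$ uniformly in finite $A \subseteq \NN$, and taking the supremum over $|A| \le N$ yields $\varphi_u[\EE, \MO_F](N) \approx \varphi_u[\EE, \MO_G](N)$.

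For the converse, set $D_N^F := \varphi_u[\EE, \MO_F](N)$ and analogously $D_N^G$, so that the defining identity $F(1/D_N^F) = 1/N$ gives $1/D_N^F = F^{-1}(1/N)$ (and similarly for $G$). The hypothesis $D_N^F \approx D_N^G$ translates directly into
\[
F^{-1}(1/N) \approx G^{-1}(1/N), \qquad N = 1, 2, \dots
\]
The crux is to propagate this equivalence from the discrete sequence $\{1/N\}_{N\ge 1}$ to all small positive scalars. Since $F$ is convex with $F(0) = 0$, the ratio $t \mapsto F(t)/t$ is nondecreasing on $(0,\infty)$. Applying this at $t_1 = 1/D_{N+1}^F < t_2 = 1/D_N^F$ gives $D_{N+1}^F/(N+1) \le D_N^F/N$, so $D_N^F \le D_{N+1}^F \le 2 D_N^F$, or equivalently $F^{-1}(1/N) \le 2\, F^{-1}(1/(N+1))$; consecutive values of $F^{-1}$ on this sequence are therefore comparable. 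For any sufficiently small $s > 0$, pick $N$ with $1/(N+1) \le s \le 1/N$; monotonicity of $F^{-1}$ together with the factor-of-$2$ estimate yields $F^{-1}(s) \approx F^{-1}(1/N)$, and the same reasoning applies to $G^{-1}$. Combining these comparisons, $F^{-1}(s) \approx G^{-1}(s)$ for all small $s > 0$.

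To finish, fix constants $0 < c \le C$ with $c\, G^{-1}(s) \le F^{-1}(s) \le C\, G^{-1}(s)$ near the origin. Substituting $s = G(u)$ for small $u > 0$ and applying the (nondecreasing) function $F$ to both sides converts this into
\[
F(cu) \le G(u) \le F(Cu)
\]
on a neighborhood of $0$. Applying Theorem~\ref{MusielakInclusion} to the constant Musielak-Orlicz sequences $(F,F,\dots)$ and $(G,G,\dots)$ in both directions now gives $\wMO_F = \wMO_G$, and hence $\MO_F = \MO_G$, with equivalent norms.

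The main technical obstacle is the discrete-to-continuous passage in the second paragraph; once the convexity estimate $D_{N+1}^F \le 2 D_N^F$ is in hand, the rest is routine bookkeeping, and the final appeal to Theorem~\ref{MusielakInclusion} for Orlicz sequence spaces amounts to the standard criterion $F(cu) \le G(u) \le F(Cu)$ for equivalence near the origin.
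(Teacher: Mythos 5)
Your proof is correct and follows exactly the route the paper intends: the paper omits the argument, asserting the lemma is ``easy to obtain'' from the identity $F(1/D_N)=1/N$, and your write-up simply fleshes out that step (the convexity estimate $D_{N+1}\le\frac{N+1}{N}D_N$ to pass from the sequence $\{1/N\}$ to all small $s$, then the standard criterion $F(ct)\le G(t)\le F(Ct)$ near the origin for $\ell_F=\ell_G$). No gaps; the only cosmetic remark is that instead of invoking Theorem~\ref{MusielakInclusion} you could cite directly the equivalence criterion for Orlicz functions at the origin that the paper states at the start of Section~\ref{Orlicz}.
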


Another elementary property of interest for us is that the unit vector basis of $\MO_F\oplus \MO_F$ is equivalent to the unit vector basis of $\MO_F$.

We will need to consider Musielak-Orlicz sequence spaces arising from the flows of an Orlicz function.
To be precise,  fix a  convex Orlicz function $F$,
and for $0<s<\infty$ consider
\[
F_s\colon[0,\infty)\to[0,\infty), \quad t\mapsto \frac{F(st)}{F(s)}.
\]
Given a sequence $(s_n)_{n=1}^\infty$ in $(0,\infty)$ we define
\[
\MO_F[s_n]:=\MO_{(F_{s_n})}.
\]
The following result,  implicitly stated in \cite{CasazzaKalton1998}, establishes  the connection between this kind of Musielak-Orlicz sequence spaces and block bases in Orlicz sequence spaces.
\begin{lemma}\label{BlockOrliczEquivalentMusielak} Let $F$ be a  convex Orlicz function. 
\begin{enumerate}
\item[(i)] Let $\BB=(\xx_n)_{n=1}^\infty$ be a constant-coefficient normalized block basic sequence  of the unit vector basis of $\MO_F$. For each $n\in \mathbb N$, 
denote by $N_n$ the lenght of the block $\xx_n$ 
and  let $s_n\in(0,\infty)$  be such that  $N_n  F(s_n) =1$. Then $\BB$ is a complemented basic sequence in $\MO_F$ isometrically equivalent to the unit vector basis of 
$\MO_F[s_n]$. 

\item[(ii)]   Let $(s_n)_{n=1}^\infty$ be a bounded sequence of positive numbers. Then  the unit vector basis of $\MO_F[s_n]$ is equivalent to a constant-coefficient block basis of the unit vector basis of  $\MO_F$.
\end{enumerate}
\end{lemma}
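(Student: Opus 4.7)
The plan is to deduce both parts from a direct computation of the Luxemburg norm on the blocks. For (i), writing $\xx_n = s_n \sum_{k \in A_n} \ee_k$ with the $A_n$ pairwise disjoint and $|A_n|=N_n$, the disjointness of supports gives
\[
\left\Vert \sum_{n} a_n \xx_n \right\Vert_F = \inf\left\{ t>0 \tq \sum_{n} N_n F(|a_n| s_n / t) \le 1 \right\}.
\]
The normalisation hypothesis $N_n F(s_n) = 1$ converts each summand into $F(|a_n| s_n / t)/F(s_n) = F_{s_n}(|a_n|/t)$, so the right-hand modular is exactly the one defining the Luxemburg norm of $(a_n)$ in $\MO_{(F_{s_n})} = \MO_F[s_n]$. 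This yields the isometric equivalence asserted in (i) and shows in particular that $(\xx_n)$ is basic.

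To see that $[\xx_n]$ is complemented, I will exhibit the averaging projection $P\colon \MO_F \to [\xx_n]$ defined by
\[
P\Bigl(\sum_{k} b_k \ee_k\Bigr) = \sum_{n} \bar b_n \sum_{k \in A_n} \ee_k, \qquad \bar b_n := \frac{1}{N_n}\sum_{k \in A_n} b_k.
\]
Its range is the closed linear span of $\{\sum_{k \in A_n}\ee_k\}_n$, which coincides with $[\xx_n]$, and a direct check gives $P^2=P$. Boundedness follows from Jensen's inequality applied with respect to the uniform measure on each $A_n$: convexity of $F$ delivers $N_n F(|\bar b_n|/t) \le \sum_{k\in A_n} F(|b_k|/t)$ for every $t>0$, and summing over $n$ shows the modular of $P(x)$ is dominated by that of $x$, forcing $\Vert P\Vert \le 1$.

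For (ii) I will discretise the data. Given a bounded sequence $(s_n)$, set $N_n = \max\{1,\lceil 1/F(s_n)\rceil\}\in\NN$, so that $N_n F(s_n)$ lies in a fixed bounded interval around $1$, and use the continuity of $F$ together with $F(0)=0$ and $F(1)=1$ to pick $c_n>0$ with $N_n F(c_n) = 1$. Boundedness of $(s_n)$ forces the ratios $c_n/s_n$ to lie in a compact subinterval of $(0,\infty)$. Taking disjoint $A_n\subset\NN$ of size $N_n$ and setting $\xx_n = c_n \sum_{k \in A_n} \ee_k$, part (i) identifies $(\xx_n)$ with the unit vector basis of $\MO_F[c_n]$. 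The step likely to require the most care is the final identification $\MO_F[c_n] = \MO_F[s_n]$: it rests on a uniform two-sided bound $F_{c_n}(t) \approx F_{s_n}(\lambda t)$ on the range of $t$ near the origin relevant for the Musielak-Orlicz modular, which one reads off from convexity and monotonicity of $F$ given that $c_n/s_n$ stays in a compact subinterval of $(0,\infty)$, and Theorem~\ref{MusielakInclusion} then delivers the equivalence of the two Musielak-Orlicz norms.
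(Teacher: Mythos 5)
Your argument is correct. Note that the paper itself offers no proof of this lemma: it is stated as being ``implicitly'' contained in Casazza--Kalton, so your write-up supplies a self-contained argument where the paper gives only a citation. The route you take is the natural one: the isometric identity in (i) is exactly the modular computation $N_nF(|a_n|s_n/t)=F_{s_n}(|a_n|/t)$ forced by the normalisation $N_nF(s_n)=1$, and the norm-one averaging projection via Jensen's inequality is the standard complementation device for constant-coefficient blocks in Orlicz (indeed in any symmetric) sequence spaces; it extends from $c_{00}$ to all of $\MO_F$ since $\MO_F$ is by definition the closed span of the unit vectors. For (ii), your discretisation is sound, but two small points deserve to be made explicit. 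First, one must have $F(s_n)>0$ for all $n$ (otherwise $F_{s_n}$ is undefined and $N_n$ would be infinite); this is implicit in the statement of the lemma. Second, the claim that $c_n/s_n$ stays in a compact subinterval of $(0,\infty)$ is where the real content lies: the upper bound $c_n\le s_n$ follows from $F(c_n)=1/N_n\le F(s_n)$ and strict monotonicity of $F$ where it is positive, while the lower bound $c_n\ge s_n/K$ (with $K=\sup_n N_nF(s_n)$) uses the convexity inequality $F(\lambda t)\le\lambda F(t)$ for $0<\lambda\le1$ applied to $F(s_n/K)\le F(s_n)/K\le F(c_n)$. With $c_n=\theta_ns_n$, $\theta_n\in[1/K,1]$, the same two inequalities give the uniform bounds $F_{c_n}(t)\le K\,F_{s_n}(t)$ and $F_{s_n}(t)\le F_{c_n}(Kt)$, which yield $\MO_F[c_n]=\MO_F[s_n]$ either directly from the definition of the Luxemburg norm or via Theorem~\ref{MusielakInclusion} with $a_n=0$, exactly as you indicate.
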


Next we focus on  convex Orlicz functions that are equivalent at the origin to $t\mapsto t^p(-\log t)^{-a}$  for some $1\le p$ and $a>0$. To be precise, put 
 \begin{align}\label{OurOrliczFunction}
F^{p,a}(t)=\begin{cases}   
e^{-pa} t^p(-\log t)^{-a} &\text{if}\; 0<t <\frac{1}{e}, \\
t^{p+a}   &\text{if}\; \frac{1}{e}\le t <\infty.
\end{cases}
\end{align}
Let us recall some properties of these Orlicz functions. Denote $F=F^{p,a}$.

\begin{itemize} 

\item $F$ is multiplicatively convex, i.e.,
\[
F(s^\theta t^{1-\theta})\le F(s)^\theta F(t)^{1-\theta} \text{ whenever }0<s,t,\theta<1.
\]

\item $\wMO_F=\MO_F$.

\item If $p=1$, $\wMO_{F}$ is anti-Euclidean (cf. \cite{CasazzaKalton1998}*{Lemma 6.2}).

\item 
 $\wMO_{F}$ and $\ell_p$ are the unique Orlicz spaces that are  subspaces of $\wMO_{F}$  (cf. \cite{LinTza1977}*{Theorem 4.a.8}).

 \item  $\wMO_{F}$ has a unique symmetric basis (cf.  \cite{LinTza1977}*{Proposition 4.b.10}). However,  $\wMO_{F}$ does not have a unique unconditional basis  (cf. \cite{LindenstraussZippin1969}).

 \end{itemize}

\begin{remark}\label{complementedlp} Let  $F=F^{p,a}$ and suppose $\lim_n s_n=0$. It follows from \cite{CasazzaKalton1998}*{Proposition 6.6}  that
  the unit vector basis of $\MO_{F}[s_n]$ has a subbasis equivalent  to the unit vector basis of $\ell_p$. 
Combining with Lemma~\ref{BlockOrliczEquivalentMusielak} we obtain that $\ell_p$ is a complemented subspace of $\wMO_F$.
\end{remark}

\begin{proposition}\label{MusielakGreedy} Let $p\in[1,\infty)$ and $a\in(0,\infty)$. Let $(s_n)_{n=1}^\infty$ be a sequence such that $\lim_n s_n = 0$.  Consider $F=F^{p,a}$ defined as is \eqref{OurOrliczFunction}. The following are equivalent:
\begin{enumerate}
\item[(a)] $\MO_{F}[s_n]=\ell_p$, i.e., the unit vector bases of $\MO_{F}[s_n]$  and $\ell_p$ are equivalent.

\item[(b)] The unit vector basis of $\MO_{F}[s_n]$ is greedy.


\item[(c)] There is a constant $R>1$ such that 
\begin{equation}\label{targetineq}
|\{ n\in \NN \tq s_n  \ge \exp(-2^k) \}| \le R^{2^k},\quad \forall\;k\in\NN.\end{equation}

\end{enumerate}
\end{proposition}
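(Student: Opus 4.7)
The plan is to prove (a)$\Rightarrow$(b)$\Rightarrow$(c)$\Rightarrow$(a). The implication (a)$\Rightarrow$(b) is immediate, since the unit vector basis of $\ell_{p}$ is symmetric and hence greedy, and greediness is preserved under equivalence of bases. The other two implications both rest on the closed-form identity
\[
F_{s}(t)=\frac{F(st)}{F(s)}=t^{p}\left(\frac{-\log s}{-\log(st)}\right)^{a}\quad(s,st\in(0,1/e)),
\]
together with its two immediate consequences: $F_{s}(t)\le t^{p}$ for $t\in[0,1]$ (since $-\log(st)\ge-\log s$), and $F_{s}(t)\ge t^{p}/(K+1)^{a}$ whenever $t\ge s^{K}$ (since then $-\log(st)\le(K+1)(-\log s)$).

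For (c)$\Rightarrow$(a) I would compare the two norms directly on $c_{00}$. The inequality $F_{s_{n}}(t)\le t^{p}$ combined with $\|\alpha\|_{\infty}\le\|\alpha\|_{p}$ yields $\|\alpha\|_{(F_{s_{n}})}\le\|\alpha\|_{p}$. For the converse, let $u=\|\alpha\|_{(F_{s_{n}})}$, so $\sum_{n}F_{s_{n}}(|\alpha_{n}|/u)\le 1$, and partition $\NN$ into $A=\{n\tq|\alpha_{n}|/u\ge s_{n}^{K}\}$ and $B=\NN\setminus A$. On $A$ the lower bound gives $(|\alpha_{n}|/u)^{p}\le(K+1)^{a}F_{s_{n}}(|\alpha_{n}|/u)$, and on $B$ we have $|\alpha_{n}|^{p}<u^{p}s_{n}^{Kp}$; summation gives
\[
\|\alpha\|_{p}^{p}\le u^{p}\Bigl((K+1)^{a}+\sum_{n=1}^{\infty}s_{n}^{Kp}\Bigr).
\]
So it only remains to verify $\sum_{n}s_{n}^{Kp}<\infty$ for some $K>0$. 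Grouping dyadically and applying (c),
\[
\sum_{n=1}^{\infty}s_{n}^{Kp}\le\sum_{k=0}^{\infty}R^{2^{k+1}}e^{-Kp\cdot 2^{k}}=\sum_{k=0}^{\infty}\left(\frac{R^{2}}{e^{Kp}}\right)^{2^{k}},
\]
which converges as soon as $K>2(\log R)/p$.

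For (b)$\Rightarrow$(c) I would argue by contradiction. Writing $\EE=(\ee_{n})$ for the unit vector basis of $\MO_{F}[s_{n}]$, the estimate $F_{s_{n}}(t)\le t^{p}$ gives $\varphi_{u}[\EE,\MO_{F}[s_{n}]](N)\le N^{1/p}$, whereas Remark~\ref{complementedlp} furnishes a subbasis of $\EE$ equivalent to the unit vector basis of $\ell_{p}$, giving $\varphi_{u}[\EE,\MO_{F}[s_{n}]](N)\gtrsim N^{1/p}$. Since a greedy basis is democratic (Konyagin--Temlyakov), this forces $\varphi_{l}[\EE,\MO_{F}[s_{n}]](N)\gtrsim N^{1/p}$. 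Now suppose (c) fails: for every $M>1$ there exists $k$ (necessarily arbitrarily large) with $N_{k}:=|\{n\tq s_{n}\ge e^{-2^{k}}\}|>M^{2^{k}}$. On $A_{k}:=\{n\tq s_{n}\ge e^{-2^{k}}\}$ the quantities $\alpha_{n}:=-\log s_{n}$ satisfy $\alpha_{n}\le 2^{k}$, so $u:=\|\sum_{n\in A_{k}}\ee_{n}\|_{(F_{s_{n}})}$ is determined by
\[
u^{p}=\sum_{n\in A_{k}}\left(\frac{\alpha_{n}}{\alpha_{n}+\log u}\right)^{a}\le N_{k}\left(\frac{2^{k}}{2^{k}+\log u}\right)^{a}.
\]
Substituting $u=N_{k}^{1/p}/T$ and $\log N_{k}>2^{k}\log M$ into the resulting inequality $(1+(\log u)/2^{k})^{a}\le N_{k}/u^{p}$, an elementary manipulation shows that for $k$ large (so that $(\log T)/2^{k}$ is negligible) one is forced into $T\gtrsim(\log M)^{a/p}$. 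Hence $u\lesssim N_{k}^{1/p}(\log M)^{-a/p}$, which together with $u\ge\varphi_{l}(N_{k})\gtrsim N_{k}^{1/p}$ yields a uniform upper bound on $\log M$, contradicting the arbitrariness of $M$.

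The decisive step, and the main anticipated obstacle, is this last norm computation: one must squeeze the logarithmic correction $(-\log t)^{-a}$ built into $F^{p,a}$ precisely enough to convert the combinatorial constraint $\alpha_{n}\le 2^{k}$ on $A_{k}$ into the loss factor $(\log M)^{-a/p}$ in the norm relative to $N_{k}^{1/p}$. This is the sharp analytic ingredient that links the purely combinatorial condition (c) to democracy of the basis.
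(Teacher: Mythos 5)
Your proposal is correct, and for the substantive direction (b)$\Rightarrow$(c) it is essentially the computation the paper itself performs: both arguments combine the democracy of the basis with the $\ell_p$-subbasis from Remark~\ref{complementedlp} to get $\varphi_l[\EE,\MO_F[s_n]](N)\gtrsim N^{1/p}$, then feed $u:=\Vert\sum_{n\in A_k}\ee_n\Vert$ into the modular identity and exploit $-\log s_n\le 2^k$ on $A_k$ to bound $N_k$. The paper runs this forward to an explicit constant $R=c^{-p}\exp\{p(c^{-p/a}-1)\}$, while you run it as a contradiction with a parameter $M$; these are the same estimate in different clothing (and your "for $k$ large, $(\log T)/2^k$ is negligible" step is legitimate precisely because $T=N_k^{1/p}/u\le 1/c$ is uniformly bounded by the democracy lower bound you invoke at the end). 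The genuine difference is (c)$\Rightarrow$(a): the paper simply cites \cite{CasazzaKalton1998}*{Proposition 6.6(3)}, whereas you give a short self-contained proof via the two pointwise bounds $F_s(t)\le t^p$ and $F_s(t)\ge t^p/(K+1)^a$ for $t\ge s^K$, plus the dyadic summability $\sum_n s_n^{Kp}<\infty$ for $K>2(\log R)/p$. That argument is sound (modulo the harmless normalizations $s_n<1/e$ and the finitely many $n$ with $s_n\ge e^{-1}$ in the dyadic grouping), and it buys independence from the external reference at the cost of a page of elementary estimates.
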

\begin{proof}  That (a) implies (b) is obvious, and that (c) implies (a) is established in  \cite{CasazzaKalton1998}*{Proposition 6.6(3)}.
Let us show that (b) implies (c).

Assume, without lost of generality, that $s_n\le e^{-1}$ for all $n\in\NN$.
  Since the unit vector basis $\EE$ of  $\MO_{F}[s_n]$ has a subbasis equivalent to the unit vector basis of $\ell_p$ (see Remark~\ref{complementedlp})
and $\EE$ is democratic, there is a constant $0<c<1$ such that
 \begin{equation}\label{initialbound}
c N^{1/p}\leq \varphi_l[\EE, \MO_{F}[s_n]](N), \quad N\in\NN.
\end{equation}
For each $k\in\NN$ let $A_k=\{ n\in \NN \tq s_n  \ge \exp(-2^k) \}$ and put $N_k=|A_k|$. It suffices to prove \eqref{targetineq}
 for $k$ such that $cN_k\ge 1$. 
 
 By \eqref{initialbound} we have 
 \[ c N_k^{1/p}\le \left\Vert \sum_{n\in A_k} \ee_n \right\Vert_{ \MO_{F}[s_n]}. \]Hence,
\begin{align*}
1  &\le \sum_{n\in A_k} \frac{ F(c^{-1}N_k^{-1/p} s_n)}{F(s_n)}\\
&=  c^{-p}  N_k^{-1}  \sum_{n\in A_k}\left( \frac{-\log(s_n)}{-\log(s_n)+\log(cN_k^{1/p})}\right)^a\\
 &\le  c^{-p}  \left(\frac{2^k}{2^k+\log(cN_k^{1/p})}\right)^a,
\end{align*}
which yields
\[
\log(cN_k^{1/p})\le (c^{-p/a}-1) 2^k.
\]
Consequently, choosing $R=c^{-p} \exp\{p (c^{-p/a}-1)\}$ we obtain
$
N_k \le  R^{2^k}.
$
 \end{proof}

\begin{remark}\label{Identification} Suppose $F$ is a Orlicz function.  Let $(s_n)_{n=1}^\infty$ be a bounded sequence of positive numbers  which does not converge to zero. There are only two mutually exclusive possibilities:
\begin{itemize}
\item $0<\liminf_n s_n\le \limsup_n s_n<\infty$.  It is then quite obvious that $\MO_F[s_n]=\MO_F$. 

\item  $0=\liminf s_n <\limsup_n s_n<\infty$. Then $\MO_F[s_n] \approx \MO_F$  (cf. \cite{LinTza1977}*{Proposition 3.a.5}),  but this does not mean that the unit vector basis of  $\MO_F[s_n]$ is a  greedy basis. We deal with this situation in the following  corollary.

\end{itemize} 
\end{remark}
\begin{corollary}\label{uvbNotGreedy} Suppose that $p\in[1,\infty)$ and that $a\in(0,\infty)$. Consider $F=F^{p,a}$  as defined in \eqref{OurOrliczFunction}.
Let $(s_n)_{n=1}^\infty$ be a sequence such that  $0=\liminf_n s_n < \limsup_n s_n<\infty$. Then the unit vector basis of $\ell_F[s_n]$ is not a greedy basis.
\end{corollary}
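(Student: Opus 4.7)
My plan is to show that the unit vector basis $\EE=(\ee_n)_{n=1}^\infty$ of $\MO_F[s_n]$ is not democratic, which by the Konyagin--Temlyakov characterization rules out its being greedy. The two hypotheses give access to two subsequences with very different behavior: $\liminf_n s_n=0$ yields one converging to $0$ (on which the basis behaves like the $\ell_p$-basis), while $0<\limsup_n s_n<\infty$ yields one trapped in a compact subinterval of $(0,\infty)$ (on which the basis behaves like the unit vector basis of $\MO_F$). Since these two model bases have incompatible democracy functions, the whole basis cannot be democratic.

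Carrying this out, I would first extract $(n_k)$ with $s_{n_k}\to 0$; the sub-basis $(\ee_{n_k})$ of $\MO_F[s_n]$ is then isometrically equivalent to the unit vector basis of $\MO_F[(s_{n_k})]$, to which Remark~\ref{complementedlp} applies. This produces a further infinite subset $C\subseteq\NN$ such that $(\ee_n)_{n\in C}$ is equivalent to the unit vector basis of $\ell_p$ in $\MO_F[s_n]$, so for subsets $A_N\subseteq C$ of cardinality $N$ I obtain
\[
\varphi_u[\EE,\MO_F[s_n]](N)\ \ge\ \left\|\sum_{n\in A_N}\ee_n\right\|_{\MO_F[s_n]}\ \gtrsim\ N^{1/p}.
\]
Second, since $0<\limsup_n s_n<\infty$ I would extract an infinite $D\subseteq\NN$ on which the $s_n$ stay in a compact subinterval of $(0,\infty)$, and invoke the first bullet of Remark~\ref{Identification} to identify $\MO_F[(s_n)_{n\in D}]$ with $\MO_F$ (equivalent norms). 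Thus $(\ee_n)_{n\in D}$ is equivalent to the unit vector basis of $\MO_F$, whose democracy function $D_N$ is determined by $F(1/D_N)=1/N$ and, for $F=F^{p,a}$, satisfies $D_N\approx N^{1/p}(\log N)^{-a/p}$. Taking $B_N\subseteq D$ with $|B_N|=N$ yields
\[
\varphi_l[\EE,\MO_F[s_n]](N)\ \le\ \left\|\sum_{n\in B_N}\ee_n\right\|_{\MO_F[s_n]}\ \lesssim\ N^{1/p}(\log N)^{-a/p}.
\]

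Combining the two estimates,
\[
\frac{\varphi_u[\EE,\MO_F[s_n]](N)}{\varphi_l[\EE,\MO_F[s_n]](N)}\ \gtrsim\ (\log N)^{a/p}\ \longrightarrow\ \infty,
\]
so $\EE$ is not democratic and, a fortiori, not greedy. I do not anticipate any real obstacle: both Remarks~\ref{complementedlp} and \ref{Identification} apply after the passage to an appropriate subsequence, and the asymptotic evaluation $D_N\approx N^{1/p}(\log N)^{-a/p}$ is immediate from solving $e^{-pa}D_N^{-p}(\log D_N)^{-a}=1/N$.
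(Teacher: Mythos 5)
Your proof is correct, and its skeleton --- split the indices according to whether $s_n$ is small or bounded away from zero, identify the two resulting sub-bases, and contradict democracy --- is the same as the paper's. The load, however, is carried by different auxiliary results. For the ``small $s_n$'' part the paper applies Proposition~\ref{MusielakGreedy} to the sub-basis $\BB_1$ (which inherits greediness from the full basis under the assumption being refuted) to conclude that $\BB_1$ is equivalent to the unit vector basis of $\ell_p$; you instead use only Remark~\ref{complementedlp}, i.e.\ the mere existence of an $\ell_p$-subbasis, which yields the one-sided bound $\varphi_u[\EE,\MO_F[s_n]](N)\gtrsim N^{1/p}$ and bypasses the nontrivial implication (b)$\Rightarrow$(c) of Proposition~\ref{MusielakGreedy} altogether. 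At the other end, the paper equates the upper democracy functions of $\BB_1$ and $\BB_2$ and invokes Lemma~\ref{SameDemocracyFunction} to force $\ell_p=\MO_F$, an absurdity; you instead compute the fundamental function of $\MO_F$ explicitly, $D_N\approx N^{1/p}(\log N)^{-a/p}$, and let the unbounded ratio $\varphi_u/\varphi_l\gtrsim(\log N)^{a/p}$ finish the argument. Both routes are sound; yours is slightly more self-contained and quantitative, exhibiting the logarithmic rate at which democracy fails, while the paper's is shorter given that Proposition~\ref{MusielakGreedy} and Lemma~\ref{SameDemocracyFunction} are already available.
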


\begin{proof}Suppose that the unit vector basis of $\ell_F[s_n]$ is greedy.
 There are injective mappings $\pi,\rho\colon\NN\to \NN$  such that $\lim_n  s _{\pi(n)} =0$
   and $\inf_n s_{\rho(n)} >0$.  Obviously, $\BB_1=(\xx_{\pi(n)})_{n=1}^\infty$ is isometricaly equivalent to the unit vector basis of $\ell_F[s_{\pi(n)}]$.
   Hence, by Proposition~\ref{MusielakGreedy}, $\BB_1$ is equivalent to the unit vector basis of $\ell_p$.  Analogously, 
    $\BB_2=(\xx_{\rho(n)})_{n=1}^\infty$ is isometricaly equivalent to the unit vector basis of $\ell_F[s_{\rho(n)}]$. Therefore (see Remark~\ref{Identification}) 
    $\BB_2$ is equivalent to the unit vector basis of $\wMO_F$. Consequently, denoting by $\EE$ the unit vector basis, 
    \[
    \varphi_u[\EE,\ell_p] (N) \approx \varphi_u[\BB_1,\wMO_F[s_n])(N) \approx  \varphi_u[\BB_2,\wMO_F[s_n])(N) \approx
   \varphi_u[\EE,\wMO_F](N).
   \]
   By Lemma~\ref{SameDemocracyFunction},
    $\ell_p\approx \wMO_F$. Then $t\approx t^p(-\log t)^{-a}$ at the origin,  an absurdity.
 \end{proof}

\begin{theorem}\label{MainOrlicz} Let $a>0$ and consider $F=F^{1,a}$ as defined in \eqref{OurOrliczFunction}. If $\BB$ is a complemented greedy basic sequence in $\ell_F$, then  $\BB$ is equivalent to the unit vector basis either of 
 $\ell_1$ or of $\ell_F$. Conversely, $\ell_F$ has a complemented basic sequence equivalent to the unit vector basis of $\ell_1$.
 \end{theorem}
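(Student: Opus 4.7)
The converse direction is immediate from the machinery already in place: Remark~\ref{complementedlp} applied with $p=1$ and any null sequence $(s_n)\subset (0,1]$ produces a complemented subspace of $\ell_F$ isomorphic to $\ell_1$, realized via Lemma~\ref{BlockOrliczEquivalentMusielak}(ii) as the $\ell_1$-subbasis sitting inside a constant-coefficient block basis of $\ell_F$. This exhibits the desired complemented basic sequence in $\ell_F$ equivalent to the unit vector basis of $\ell_1$.

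For the main direction, let $\BB$ be a complemented greedy basic sequence in $\ell_F$. Since $p=1$, the listed properties of $F=F^{1,a}$ tell us that $\ell_F$ is anti-Euclidean, and because $\BB$ is unconditional and complemented it inherits the lattice anti-Euclidean property on its closed linear span. Casazza-Kalton's Theorem~3.5 then yields an integer $M$ such that $\BB$ is equivalent to a complemented block basis of the unit vector basis of $\ell_F^M$. Because the unit vector basis of $\ell_F\oplus\ell_F$ is equivalent to that of $\ell_F$, iterating transfers this to a complemented block basis of $\ell_F$ itself.

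The central technical step is to upgrade this to: $\BB$ is equivalent to the unit vector basis of $\MO_F[s_n]$ for some bounded sequence $(s_n)\subset(0,1]$. Following the template of the Nakano proof of Theorem~\ref{GreedyNakano}, I would exploit the fact that $\ell_F$, being symmetric, is automatically right-dominant, and appeal to Casazza-Kalton's Theorem~5.6 to conclude that the complemented block basis is permutatively equivalent to a constant-coefficient block basis, which by Lemma~\ref{BlockOrliczEquivalentMusielak}(i) is isometrically equivalent to the unit vector basis of some $\MO_F[s_n]$. I expect this constant-coefficient reduction to be the main obstacle, since one must ensure that the greediness of $\BB$ is transported intact to the associated $(s_n)$-indexed basis so as to activate the Musielak-Orlicz theory below.

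Once $\BB$ is identified (up to equivalence) with the unit vector basis of $\MO_F[s_n]$, the trichotomy of Remark~\ref{Identification} closes the argument. If $0<\liminf_n s_n$, then $\MO_F[s_n]=\ell_F$ with equivalent unit vector bases, so $\BB$ is equivalent to the unit vector basis of $\ell_F$. The intermediate case $0=\liminf_n s_n<\limsup_n s_n$ is excluded by Corollary~\ref{uvbNotGreedy}, which forbids the unit vector basis of $\MO_F[s_n]$ from being greedy. Finally, in the remaining case $s_n\to 0$, Proposition~\ref{MusielakGreedy} (applicable precisely because the unit vector basis of $\MO_F[s_n]$ is greedy) forces $\MO_F[s_n]=\ell_1$, and hence $\BB$ is equivalent to the unit vector basis of $\ell_1$.
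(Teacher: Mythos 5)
Your converse direction and your endgame (the trichotomy on $(s_n)$ via Proposition~\ref{MusielakGreedy}, Remark~\ref{Identification} and Corollary~\ref{uvbNotGreedy}) coincide with the paper's argument, and the passage from anti-Euclideanness to a complemented block basis of $\ell_F$ (via \cite{CasazzaKalton1998}*{Theorem 3.5} together with the square property, where the paper simply quotes Theorem 3.4) is fine. The genuine gap is exactly at the step you yourself flag as the main obstacle: the reduction to a constant-coefficient block basis. Your proposed tool does not work. First, $\ell_F$ is \emph{not} right-dominant: for a symmetric sequence space positions are irrelevant, so right dominance would force every normalized disjointly supported family of cardinality $N$ to have a sum of norm comparable to $\varphi_u[\EE,\ell_F](N)\approx N(\log N)^{-a}$, whereas long normalized constant-coefficient blocks in $\ell_F$ generate $\ell_1$ (Remark~\ref{complementedlp}) and have sums of norm close to $N$. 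Theorem~\ref{RightDom} is a statement about Nakano spaces with decreasing exponents, not a consequence of symmetry. Second, even if right dominance held, \cite{CasazzaKalton1998}*{Theorem 5.6} yields permutative equivalence to a \emph{subsequence of the unit vector basis}, not to a constant-coefficient block basis; since the unit vector basis of $\ell_F$ is symmetric, a subsequence is equivalent to the whole basis, so your route would ``prove'' that every complemented greedy basic sequence is equivalent to the unit vector basis of $\ell_F$ --- contradicting the $\ell_1$ alternative, which genuinely occurs.

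The ingredient the paper actually uses is \cite{CasazzaKalton1998}*{Lemma 6.4}: for the multiplicatively convex functions $F=F^{p,a}$, any complemented block basic sequence of the unit vector basis of $\ell_F$ is permutatively equivalent to a normalized constant-coefficient block basic sequence. This is a special structural fact about these Orlicz functions, not a consequence of symmetry or right dominance. Once it is in place, Lemma~\ref{BlockOrliczEquivalentMusielak}(i) identifies $\BB$ with the unit vector basis of $\MO_F[s_n]$ where $N_nF(s_n)=1$, and your trichotomy finishes the proof exactly as in the paper. (As a side remark, the worry about ``transporting greediness'' to the $(s_n)$-indexed basis is not an issue: equivalence of bases preserves unconditionality and the democracy functions, hence greediness.)
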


 \begin{proof} Notice that  since $\wMO_F$ is anti-Euclidean the basic  sequence $\BB$ is lattice anti-Euclidean. By \cite{CasazzaKalton1998}*{Theorem 3.4}, $\BB$ is permutatively equivalent to a block basis of the unit vector basis of $\wMO_F$. By 
  \cite{CasazzaKalton1998}*{Lemma 6.4}, $\BB$ is  permutatively  equivalent to a normalized constant coefficient block basis.  Let $(N_n)_{n=1}^\infty$ be the sequence of the lengths of those blocks, and
choose $(s_n)_{n=1}^\infty$  such that $N_n F(s_n)=1$.  By Lemma~\ref{BlockOrliczEquivalentMusielak}, $\BB$ is equivalent to the unit vector basis of $\MO_F[s_n]$.
  
  Suppose that $\lim_{n} N_n=\infty$, hence $\lim_{n} s_n=0$.  Then, by Proposition~\ref{MusielakGreedy}, $\BB$ is equivalent to the unit vector basis of $\ell_1$.
  
   Suppose that $\sup N_n<\infty$. Then (see Remark~\ref{Identification}) $\BB$ is equivalent to the unit vector basis of $\wMO_F$.
   
   Suppose that $\liminf_n N_n <\limsup N_n=\infty$. Then, by Corollary~\ref{uvbNotGreedy}, the unit vector basis is not greedy.
     
   The converse is established in  Remark~\ref{complementedlp}.
   \end{proof}
   
   Corollaries~\ref{uniquenesslF} and \ref{ExistenceComplementedlF} are inmmediate from Theorem~\ref{MainOrlicz}.
   \begin{corollary}\label{uniquenesslF} Let $a>0$ and consider $F=F^{1,a}$  as defined  in \eqref{OurOrliczFunction}. Then $\ell_F$ has a unique greedy basis. \end{corollary}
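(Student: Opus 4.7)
My plan is to deduce the corollary directly from Theorem~\ref{MainOrlicz} together with the uniqueness of unconditional basis in $\ell_{1}$.

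First note that the unit vector basis $\EE$ of $\ell_{F}$ is $1$-symmetric and hence greedy, so $\ell_{F}$ does carry at least one semi-normalized greedy basis and the question of uniqueness is meaningful. Let $\BB=(\xx_{n})_{n=1}^{\infty}$ be an arbitrary semi-normalized greedy basis of $\ell_{F}$. Since $\BB$ spans the whole space, $\BB$ is trivially a complemented basic sequence in $\ell_{F}$, so Theorem~\ref{MainOrlicz} applies and gives two alternatives: either $\BB$ is equivalent to the unit vector basis of $\ell_{F}$, in which case we are done, or $\BB$ is equivalent to the unit vector basis of $\ell_{1}$.

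The plan is therefore to rule out the second alternative by a short isomorphic argument. Suppose, toward a contradiction, that $\BB$ is equivalent to the unit vector basis of $\ell_{1}$; then $\ell_{F}\approx \ell_{1}$. By the Lindenstrauss--Pe{\l}czy\'nski theorem, $\ell_{1}$ has a unique unconditional basis, so the unconditional (actually symmetric) unit vector basis $\EE$ of $\ell_{F}$ must be equivalent to the unit vector basis of $\ell_{1}$. Hence $\ell_{F}=\ell_{1}$ with equivalent norms. By Lemma~\ref{SameDemocracyFunction} this forces $F^{1,a}(t)\approx t$ near the origin, which is false because
\[
\frac{F^{1,a}(t)}{t}=e^{-a}(-\log t)^{-a}\xrightarrow[t\to 0^{+}]{}0.
\]
This contradiction leaves only the first alternative, so every semi-normalized greedy basis of $\ell_{F}$ is equivalent to $\EE$, which is exactly the statement that $\ell_{F}$ has a unique greedy basis.

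The argument is essentially bookkeeping on top of Theorem~\ref{MainOrlicz}; the only genuine step is disposing of the $\ell_{1}$ case, and there I expect no real obstacle because the uniqueness of unconditional basis in $\ell_{1}$ converts the hypothetical isomorphism $\ell_{F}\approx \ell_{1}$ into an equality of unit vector bases, which the asymptotic form of $F^{1,a}$ immediately rules out.
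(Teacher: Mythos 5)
Your proof is correct and follows exactly the route the paper intends: the authors simply declare Corollary~\ref{uniquenesslF} ``immediate from Theorem~\ref{MainOrlicz}'', and your write-up supplies the natural missing step, namely ruling out the $\ell_1$ alternative via the Lindenstrauss--Pe{\l}czy\'nski uniqueness of the unconditional basis of $\ell_1$ together with $F^{1,a}(t)\not\approx t$ at the origin (the same device the paper uses in Corollary~\ref{UniqueGreedyNakano} and Theorem~\ref{ComplementedNotGreedy}). No substantive difference from the paper's argument.
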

   
    \begin{corollary}\label{ExistenceComplementedlF} Let $a>0$ and consider $F=F^{1,a}$  as defined in \eqref{OurOrliczFunction}. A complemented subspace of $\ell_F$ has a greedy basis if and only if  it is isomorphic either  to $\ell_1$ or to $\ell_F$. 
    \end{corollary}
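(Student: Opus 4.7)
The plan is to derive both implications directly from Theorem~\ref{MainOrlicz}, since that result already characterizes all complemented greedy basic sequences of $\ell_F$.

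For the forward direction, suppose $\YY$ is a complemented subspace of $\ell_F$ equipped with a greedy basis $\BB=(\xx_n)_{n=1}^\infty$. Since $\YY=[\xx_n\colon n\in\NN]$ is complemented in $\ell_F$, the sequence $\BB$ is itself a complemented greedy basic sequence of $\ell_F$. Theorem~\ref{MainOrlicz} then forces $\BB$ to be equivalent to the unit vector basis of either $\ell_1$ or $\ell_F$, and hence $\YY\approx \ell_1$ or $\YY\approx \ell_F$.

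For the converse, I need to exhibit each of the two candidate spaces as a complemented subspace of $\ell_F$ with a greedy basis. The space $\ell_F$ is trivially complemented in itself, and its unit vector basis is $1$-symmetric, hence greedy. The space $\ell_1$ appears as a complemented subspace of $\ell_F$ by the second assertion of Theorem~\ref{MainOrlicz} (equivalently by Remark~\ref{complementedlp} applied to any null sequence of scalars), and its unit vector basis is again $1$-symmetric and thus greedy.

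I don't anticipate a genuine obstacle here: all the substance sits in Theorem~\ref{MainOrlicz}, and the corollary is just the restatement of that classification at the level of isomorphism classes of complemented subspaces rather than at the level of basic sequences. The only point worth checking carefully is that having a greedy basis on $\YY$ really does produce a greedy basic sequence in the ambient space $\ell_F$, which is immediate from the definition since greediness is preserved under the isomorphism $\YY\hookrightarrow \ell_F$.
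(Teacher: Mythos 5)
Your argument is correct and follows exactly the route the paper intends: the authors state that Corollary~\ref{ExistenceComplementedlF} is immediate from Theorem~\ref{MainOrlicz}, and your two directions (a greedy basis of a complemented subspace is a complemented greedy basic sequence of $\ell_F$, hence equivalent to the unit vector basis of $\ell_1$ or $\ell_F$; conversely both spaces embed complementably with their symmetric, hence greedy, unit vector bases) are precisely that deduction spelled out. Nothing is missing.
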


    Notice that Corollary~\ref{uniquenesslF} solves Problem~\ref{problemon}.  
    
    We end this section providing a much more natural example than the afore-mentioned Pe{\l}czy{\'n}ski's universal space $U$,  of a space with symmetric basis having a complemented subspace without a greedy basis 
       \begin{theorem}\label{ComplementedNotGreedy} Let $a>0$ and consider $F=F^{1,a}$  as defined in \eqref{OurOrliczFunction}. There is a complemented subspace of  $\ell_F$ with an unconditional basis but with no greedy basis.     \end{theorem}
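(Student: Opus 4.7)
Plan.  I shall realize the desired complemented subspace as a Musielak--Orlicz sequence space $Y=\MO_F[s_n]$, where $(s_n) = (1/\log(n+1))_{n\ge 1}$ is decreasing, tends to $0$ slowly, and fails condition (c) of Proposition~\ref{MusielakGreedy}: indeed
\[
|\{n \tq s_n \ge \exp(-2^k)\}| = |\{n \tq \log(n+1)\le \exp(2^k)\}|\asymp \exp(\exp(2^k)),
\]
which exceeds $R^{2^k}$ for every $R>1$. By Lemma~\ref{BlockOrliczEquivalentMusielak}(ii), $Y$ is isomorphic to a complemented subspace of $\ell_F$ realized through a constant-coefficient block basis, and its canonical basis is $1$-unconditional, so $Y$ possesses an unconditional basis.

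To show that $Y$ has no greedy basis, I argue by contradiction: assume $\BB$ is a greedy basis of $Y$. Since $(s_n)$ is decreasing, the Musielak--Orlicz space $Y$ is right-dominant, the Musielak--Orlicz analogue of Theorem~\ref{RightDom}; this is a direct modular comparison using the elementary fact that $F_{s_n}(t)$ increases pointwise with $n$ (because $(-\log s_n)/(-\log s_n-\log t)$ is increasing in $-\log s_n$ for $0<t<1$). Being greedy in $Y\hookrightarrow \ell_F$, $\BB$ is complemented and lattice anti-Euclidean — the latter property inherited from the lattice anti-Euclideanness of $\ell_F$ noted in the excerpt (since $F=F^{1,a}$). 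Following the pattern of the proof of Theorem~\ref{GreedyNakano}, we then apply \cite{CasazzaKalton1998}*{Theorem~3.5} to reduce $\BB$ to a complemented block basis of the unit vector basis of $Y^M$ for some $M\in\NN$, and, exploiting the right-dominance of $Y^M$, invoke \cite{CasazzaKalton1998}*{Theorem~5.6} to obtain an injective mapping $\pi\colon\NN\to\NN$ such that $\BB$ is permutatively equivalent to the subsequence $(\ee_{\pi(k)})_{k\ge 1}$ of the canonical basis of the Musielak--Orlicz space underlying $Y^M$ (whose parameter values form a multiset drawn from $(s_n)$).

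Since permutative equivalence preserves greediness, $(\ee_{\pi(k)})$ is a greedy basis of its closed span $V=\MO_F[(s_{\pi(k)})]$; and as $\pi(k)\to\infty$ with $s_n\to 0$, the sequence $(s_{\pi(k)})$ also tends to $0$. The implication (b)$\Rightarrow$(a) of Proposition~\ref{MusielakGreedy} then gives $V=\ell_1$, and the permutative equivalence of bases forces $Y\approx V\approx \ell_1$. However, the uniqueness of the unconditional basis of $\ell_1$ (\cite{LindenstraussPel1968}) would in turn force the $1$-unconditional canonical basis of $Y$ to be equivalent to the $\ell_1$-basis, which by Proposition~\ref{MusielakGreedy}, (a)$\Rightarrow$(c), requires $(s_n)=(1/\log(n+1))$ to satisfy (c) — contradicting our initial observation. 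This contradiction establishes that $Y$ admits no greedy basis, completing the proof. The only nontrivial ingredient to verify separately is the right-dominance of $Y$ (and of $Y^M$), so that the Casazza--Kalton reduction to a \emph{subsequence} of the canonical basis applies in the Musielak--Orlicz setting, exactly as in the Nakano case.
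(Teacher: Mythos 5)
Your construction of the subspace is fine: the explicit choice $s_n=1/\log(n+1)$ does fail condition (c) of Proposition~\ref{MusielakGreedy} (so the canonical basis of $Y=\MO_F[s_n]$ is unconditional but not greedy), Lemma~\ref{BlockOrliczEquivalentMusielak} realizes $Y$ as a complemented subspace of $\ell_F$, and the closing contradiction (greedy basis $\Rightarrow Y\approx\ell_1\Rightarrow$ the canonical basis is equivalent to the $\ell_1$-basis $\Rightarrow$ condition (c) holds) is sound. The gap is in the middle step: your reduction of a hypothetical greedy basis $\BB$ of $Y$ to a subsequence of the canonical basis of $Y^M$ rests on the claim that $\MO_F[s_n]$, for $(s_n)$ decreasing, is right-dominant, so that \cite{CasazzaKalton1998}*{Theorem 5.6} applies. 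This is not established in the paper or in the cited literature, and your justification --- ``a direct modular comparison using the fact that $F_{s_n}(t)$ increases pointwise with $n$'' --- does not prove it. Pointwise monotonicity only shows that shifting a \emph{fixed coordinate profile} to the right increases the modular; right dominance compares disjoint blocks whose only relation is an inequality of \emph{norms}, and the proof of Theorem~\ref{RightDom} for Nakano spaces passes from norms to modulars using the exact homogeneity $F_n(\lambda t)=\lambda^{p_n}F_n(t)$. For $G_n=F_{s_n}$ with $F=F^{1,a}$ this homogeneity fails: a vector of norm $t$ supported in positions $\ge n$ has modular anywhere between $t\bigl(1+\tfrac{-\log t}{-\log s_n}\bigr)^{-a}$ (a single coordinate) and essentially $t$ (a widely spread vector), so the block-by-block modular comparison that settles the Nakano case breaks down here. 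Whether or not right dominance ultimately holds for these spaces, as written this is an unproved assertion on which the whole argument rests.

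The gap is avoidable. Having reduced $\BB$ via \cite{CasazzaKalton1998}*{Theorem 3.5} to a complemented block basis of $Y^M=\MO_F^M[s_n]$, the correct tool is \cite{CasazzaKalton1998}*{Lemma 6.4}, which applies to these Musielak--Orlicz spaces and replaces the block basis by a \emph{constant-coefficient} one, rather than the right-dominance machinery of Section 5 of that paper. More economically still, Corollary~\ref{ExistenceComplementedlF} (already available in this section) says outright that a complemented subspace of $\ell_F$ with a greedy basis is isomorphic to $\ell_1$ or to $\ell_F$; one then only needs to exclude $Y\approx\ell_1$ (your last step already does this) and $Y\approx\ell_F$, the latter because Theorem 3.5 plus Lemma 6.4 would make the canonical basis of $\ell_F$ permutatively equivalent to that of some $\MO_F[s_n']$ with $s_n'\to 0$, whence Proposition~\ref{MusielakGreedy} would force $\ell_F\approx\ell_1$, which is absurd.
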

       \begin{proof} Use  Proposition~\ref{MusielakGreedy} to pick a sequence $(s_n)_{n=1}^\infty$ convergent to zero such that the unit vector basis of $\MO_F[s_n]$  is not greedy. Notice that, by Lemma~\ref{BlockOrliczEquivalentMusielak},  $\MO_F[s_n]$ is isomorphic to a complemented subspace of $\ell_F$.
       Since $\ell_1$ has a unique unconditional basis, $\MO_F[s_n]\not\approx \ell_1$.  
       
       If   $\MO_F[s_n]$  had a greedy basis,  $\MO_F[s_n]$  would be isomorphic to 
      $ \ell_F$ by Corollary~\ref{ExistenceComplementedlF}.  By \cite{CasazzaKalton1998}*{Theorem 3.5}, the unit vector basis of $\ell_F$ would be equivalent to a complemented block basis of $\MO_F^M[s_n]$  for some $M\in\NN$.
      By \cite{CasazzaKalton1998}*{Lemma 6.4}, there would be another sequence  $(s_n')_{n=1}^\infty$, decreasing to zero, such that the unit
  vector basis of $\ell_F$ is  permutatively equivalent to the unit vector basis of $\MO_F[s_n']$. In particular, the unit vector basis of $\MO_F[s_n']$ would be greedy. 
  But this is an absurdity  because,  again by Theorem~\ref{MusielakGreedy}, we would have
  $\ell_F\approx \MO_F[s_n'] \approx \ell_1$.
          \end{proof}
      
\section{A super-reflexive Banach space with a unique greedy basis}\label{superrefExam}
\noindent In this section we continue exploiting the structure of  sequence Orlicz spaces.  Consider 
$F=F^{2,1}$ as  defined  in  \eqref{OurOrliczFunction}. 
The dual of  the Orlicz sequence space $\ell_F$, under the canonical dual pair,  is the Orlicz sequence space $\ell_G$, where $G$ is a convex Orlicz function such that 
 \begin{equation}\label{dualfunction}
  G(t)= t^2(-\log(t)) \text{ for } 0<t\le e^{-3/2}
  \end{equation}
 (cf.\ \cite{LinTza1977}*{Example 4.c.1} and \cite{CasazzaKalton1998}*{Theorem 6.10}).
 Since both $F$ and $G$ satisfy the $\Delta_2$-condition at the origin it follows that $\ell_F$ 
and $\ell_G$ are super-reflexive  (see e.g. \cite[Sect.\ 1.f]{LT1979}).
  Moreover $\ell_G$ has Rademacher type $p$ for any $p< 2$ and cotype $2$ (cf.\ \cite{CasazzaKalton1996}).
 If we try to apply to $\ell_F$  the same techniques as in 
Sect.~\ref{Orlicz} we come across the obstruction that  these Orlicz spaces  are  sufficiently Euclidean (in fact $\ell_2$ is a complemented subspace of $\ell_F$, see Remark~\ref{complementedlp}).  In this new situation, it comes to our aid  \cite{CasazzaKalton1996}*{Theorem 4.3} (which  asserts that every complemented unconditional basic sequence in $\ell_G$ is equivalent to a constant coefficient block basic sequence of the unit vector basis) and the  reflexivity of  $\ell_F$ (which guarantees  that every  semi-normalized basis in   $\ell_F$  is weakly null).
 
 \begin{theorem}\label{MainSuperReflexive} Consider $F=F^{2,1}$ as in \eqref{OurOrliczFunction} and $G$ as  in \eqref{dualfunction}. If $\BB$ is a complemented greedy basic sequence in $\ell_F$ (respectively, $\ell_G$) then  $\BB$ is equivalent to the unit vector basis either of $\ell_2$ or of
 $\ell_F$  (respectively, $\ell_G$).
 \end{theorem}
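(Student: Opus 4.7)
The plan is to adapt the strategy of Theorem~\ref{MainOrlicz}, but, as the excerpt notes, the anti-Euclidean route is unavailable here since $\ell_2$ is complemented in both $\ell_F$ and $\ell_G$. In its place I would use \cite{CasazzaKalton1996}*{Theorem 4.3}, which (exploiting the cotype $2$ of $\ell_G$) forces every complemented unconditional basic sequence in $\ell_G$ to be equivalent to a constant-coefficient block basic sequence of the unit vector basis, together with the super-reflexivity of $\ell_F=\ell_G^{*}$, which ensures that semi-normalized basic sequences are weakly null. My strategy is to dispatch the $\ell_G$ case first and then recover the $\ell_F$ case by duality.

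For $\ell_G$, let $\BB$ be a complemented greedy basic sequence. Combining \cite{CasazzaKalton1996}*{Theorem 4.3} with \cite{CasazzaKalton1998}*{Lemma 6.4} I would replace $\BB$ by a permutatively equivalent normalized constant-coefficient block basis with block lengths $(N_n)_{n=1}^\infty$; choosing $s_n\in(0,\infty)$ with $N_n G(s_n)=1$, Lemma~\ref{BlockOrliczEquivalentMusielak} identifies $\BB$ with the unit vector basis of $\MO_G[s_n]$. Now trichotomize on $(s_n)$ as in Remark~\ref{Identification}. When $(s_n)$ is bounded away from zero, $\MO_G[s_n]=\MO_G=\ell_G$, giving the $\ell_G$ alternative. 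When $s_n\to 0$, an analog of Proposition~\ref{MusielakGreedy} tailored to $G$ should show $\MO_G[s_n]=\ell_2$: using that $\ell_2$ embeds as a complemented subbasis of $\MO_G[s_n]$ (the cotype-$2$ analog of Remark~\ref{complementedlp}), democracy of $\BB$ forces a counting bound of the form $|\{n\tq s_n\ge \exp(-2^k)\}|\le R^{2^k}$, which by the Musielak--Orlicz identification machinery pins the space down to $\ell_2$. Finally, the mixed case $0=\liminf s_n<\limsup s_n$ is excluded exactly as in Corollary~\ref{uvbNotGreedy}: extract injections $\pi,\rho\colon\NN\to\NN$ to obtain subbases equivalent to the unit vector bases of $\ell_2$ and $\ell_G$ respectively, whose upper democracy functions behave like $N^{1/2}$ and $N^{1/2}(\log N)^{-1/2}$, an impossibility.

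For the $\ell_F$ case, given a complemented greedy basic sequence $\BB$ in $\ell_F$, its biorthogonal sequence is a complemented basic sequence in $\ell_F^{*}=\ell_G$ (complementation is self-dual), is unconditional, and inherits democracy up to a constant because the biorthogonal of an unconditional democratic basis is again democratic; hence it is a complemented greedy basic sequence in $\ell_G$. Applying the $\ell_G$ case and dualizing back, $\BB$ is equivalent to the unit vector basis of either $\ell_2$ or $\ell_F$. The main obstacle I anticipate is the counting/democracy estimate for $G$: the $F^{p,a}$-form used in Proposition~\ref{MusielakGreedy} does not literally cover $G$, whose log-factor sits in the numerator rather than the denominator, so one must redo the modular inequality with $G(t)=t^2\log(1/t)$ near the origin and verify that it still yields the doubly exponential growth restriction on $|\{n\tq s_n\ge\exp(-2^k)\}|$. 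A secondary technicality is the complemented embedding of $\ell_2$ in $\MO_G[s_n]$ when $s_n\to 0$, which should be extractable from the cotype-$2$ structure of $\ell_G$ by imitating \cite{CasazzaKalton1998}*{Proposition 6.6}.
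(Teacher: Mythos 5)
Your skeleton---Casazza--Kalton's Theorem 4.3, reduction to a constant-coefficient block basis, the trichotomy on $(s_n)$ via Proposition~\ref{MusielakGreedy}, Remark~\ref{Identification} and Corollary~\ref{uvbNotGreedy}, and a duality pass between $\ell_F$ and $\ell_G$---matches the paper's, but you run the duality in the opposite direction, and that choice creates the gap you yourself flag as ``the main obstacle.'' By working first in $\ell_G$ you are forced to prove a $G$-analogue of Proposition~\ref{MusielakGreedy} and of Remark~\ref{complementedlp} for $G(t)=t^2(-\log t)$; neither is in the paper and neither is carried out in your proposal, and the modular computation with the logarithm in the numerator is precisely the content that would have to be supplied. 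The paper avoids this entirely: given a complemented greedy basic sequence $\BB$ in $\ell_F$, its dual sequence is a complemented unconditional basic sequence in $\ell_G$, Theorem 4.3 of \cite{CasazzaKalton1996} applies \emph{there}, and since the dual basic sequence of a constant-coefficient block basic sequence is again (equivalent to) a constant-coefficient block basic sequence, $\BB$ itself is permutatively equivalent to a normalized constant-coefficient block basis of the unit vector basis of $\ell_F$. From that point the proof of Theorem~\ref{MainOrlicz} runs verbatim with $F=F^{2,1}$, for which Proposition~\ref{MusielakGreedy} and Remark~\ref{complementedlp} are already proved. That one observation about duals of constant-coefficient block bases is the missing trick.

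The second problem is your justification of the duality step: ``the biorthogonal of an unconditional democratic basis is again democratic'' is false in general. Pairing $\sum_{n\in A}\xx_n^*$ against $\sum_{n\in A}\xx_n$ only yields the setwise lower bound $\bigl\Vert\sum_{n\in A}\xx_n^*\bigr\Vert\ge |A|/\varphi_u[\BB,\XX](|A|)$, with no matching upper bound on $\varphi_u[\BB^*,\XX^*]$, and indeed the dual basis of a greedy basis need not be greedy. What legitimately transfers greediness between $\ell_F$ and $\ell_G$ here is \cite{DKKT2003}*{Theorem 5.1}, which the paper invokes and which applies because these spaces are super-reflexive (hence of nontrivial type). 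A minor further slip: the fundamental function of the unit vector basis of $\ell_G$ is $\approx N^{1/2}(\log N)^{1/2}$, not $N^{1/2}(\log N)^{-1/2}$, though the contradiction in your mixed case survives either way. With the duality reoriented toward $\ell_F$ and justified by \cite{DKKT2003}*{Theorem 5.1}, your argument collapses onto the paper's.
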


 \begin{proof} Since $\ell_G^*=\ell_F$, appealing to \cite{DKKT2003}*{Theorem 5.1} it suffices to prove the result for a  complemented greedy basic sequence $\BB$ in $\ell_F$. Notice that the dual basic sequence of 
 a constant coefficient block basic sequence  is equivalent to a  constant coefficient  block basic sequence  in the dual space. Hence, 
 by  \cite{CasazzaKalton1996}*{Theorem 4.3},  $\BB$ is  permutatively  equivalent to a normalized constant coefficient block basic sequence of the unit vector basis. Now, proceed as in the proof of
  Theorem~\ref{MainOrlicz}.
   \end{proof}

Corollaries~\ref{uniquenesssSRbis} and \ref{ExistenceComplementedSRbis}  are the analogous ones to Corollaries~\ref{uniquenesslF} and \ref{ExistenceComplementedlF} in the preceding section, hence they need no further explanation.
   
   \begin{corollary}\label{uniquenesssSRbis}
Consider $F=F^{2,1}$ as  in \eqref{OurOrliczFunction} and $G$ as   in \eqref{dualfunction}. Then both spaces $\ell_F$ anf $\ell_G$ have a unique greedy basis. \end{corollary}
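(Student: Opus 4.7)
The corollary is a short deduction from Theorem~\ref{MainSuperReflexive}, so the plan is to reduce uniqueness of the greedy basis to the classification obtained there and then rule out the spurious possibility.

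Let $\BB=(\xx_n)_{n=1}^\infty$ be any (semi-normalized) greedy basis of $\ell_F$. Since $\BB$ is an unconditional basis of the whole space, its closed linear span is trivially complemented, so $\BB$ qualifies as a complemented greedy basic sequence in $\ell_F$. Thus Theorem~\ref{MainSuperReflexive} applies and forces $\BB$ to be equivalent to the unit vector basis of $\ell_2$ or to the unit vector basis of $\ell_F$. The conclusion of the corollary is then exactly the statement that the first alternative cannot occur, and the same reasoning will handle $\ell_G$ via the dual statement in Theorem~\ref{MainSuperReflexive}.

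The whole game, therefore, is to verify that $\ell_F\not\approx\ell_2$ (and $\ell_G\not\approx\ell_2$). The cleanest way I see is via the fundamental function. Recall that the democracy function of the unit vector basis $\EE$ in $\ell_F$ satisfies $F(1/\varphi_u[\EE,\ell_F](N))=1/N$, and for $F=F^{2,1}$ this gives $\varphi_u[\EE,\ell_F](N)\approx\sqrt{N\log N}$, while the corresponding quantity in $\ell_2$ is $\sqrt{N}$. If $\BB$ were equivalent to the unit vector basis of $\ell_2$, then $\BB$ would be a symmetric basis of $\ell_F$; but by the listed property of $F^{p,a}$, $\ell_F$ has a unique symmetric basis, so $\BB$ would be equivalent to $\EE$, and Lemma~\ref{SameDemocracyFunction} would force $\sqrt{N}\approx\sqrt{N\log N}$, a contradiction. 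Hence the first alternative is impossible and $\BB$ is equivalent to the canonical basis of $\ell_F$.

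For $\ell_G$ the argument is the same once we note that $G(t)\approx t^2(-\log t)$ at the origin implies $\varphi_u[\EE,\ell_G](N)\approx\sqrt{N/\log N}$, which again is not comparable to $\sqrt{N}$; combined with the fact that $\ell_G$ also has a unique symmetric basis (being of the form $\ell_{F^{p,a}}$ up to equivalence at the origin, or by invoking the same general theorem from \cite{LinTza1977}), the argument runs verbatim. I do not expect any genuine obstacle here: the only thing worth double-checking is the asymptotics of the fundamental function of $G$ near the origin (to be sure that $\ell_G\not\approx\ell_2$), which is a routine computation from the defining formula of $G$.
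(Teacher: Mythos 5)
Your reduction is exactly the intended one: a greedy basis of the whole space is in particular a complemented greedy basic sequence, so Theorem~\ref{MainSuperReflexive} leaves only the two alternatives, and the corollary amounts to excluding the $\ell_2$ alternative (the paper itself gives no written proof, declaring the corollary analogous to Corollary~\ref{uniquenesslF}). Where you diverge slightly is in how you exclude it: you invoke the uniqueness of the \emph{symmetric} basis of $\ell_F$ (resp.\ $\ell_G$) and then compare fundamental functions, whereas the pattern used elsewhere in the paper (Corollary~\ref{UniqueGreedyNakano}, Theorems~\ref{ComplementedNotGreedy} and~\ref{ComplementedNotGreedySR}) is to invoke the K\"othe--Toeplitz uniqueness of the \emph{unconditional} basis of $\ell_2$: if $\BB$ were equivalent to the unit vector basis of $\ell_2$ then $\ell_F\approx\ell_2$, whence the (unconditional) unit vector basis of $\ell_F$ would itself be equivalent to that of $\ell_2$, contradicting Lemma~\ref{SameDemocracyFunction}. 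The latter route is marginally cleaner because it does not require knowing that $\ell_G$ has a unique symmetric basis; your parenthetical that $G$ is ``of the form $F^{p,a}$ up to equivalence at the origin'' is false (that would need $a=-1$), though your fallback via the Lindenstrauss--Tzafriri criterion $\lim_{t\to0}tG'(t)/G(t)=2$ does repair it. Finally, you have the two asymptotics swapped: from $F(1/D_N)=1/N$ one gets $\varphi_u[\EE,\ell_F](N)\approx\sqrt{N/\log N}$ for $F=F^{2,1}$ and $\varphi_u[\EE,\ell_G](N)\approx\sqrt{N\log N}$ for $G(t)=t^2(-\log t)$; this is harmless since neither is $\approx\sqrt{N}$, which is all your contradiction needs.
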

   
    \begin{corollary}\label{ExistenceComplementedSRbis} Consider $F=F^{2,1}$ as  in \eqref{OurOrliczFunction} and $G$ as  in \eqref{dualfunction}.  A complemented subspace of $\ell_F$ (respectively, $\ell_G$) has a greedy basis if and only if  it is isomorphic either  to $\ell_2$ or to $\ell_F$  (respectively, $\ell_G$).
    \end{corollary}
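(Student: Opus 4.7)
The corollary is a direct consequence of Theorem~\ref{MainSuperReflexive} together with a couple of observations about complementation of $\ell_2$.

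For the nontrivial implication, suppose $Y$ is a complemented subspace of $\ell_F$ (respectively, $\ell_G$) that admits a greedy basis $\BB=(\xx_n)_{n=1}^\infty$. Since $Y$ is complemented in the ambient space, $\BB$ is itself a complemented basic sequence in $\ell_F$ (respectively, $\ell_G$) that is greedy as a basis of its closed linear span. Theorem~\ref{MainSuperReflexive} therefore forces $\BB$ to be equivalent either to the unit vector basis of $\ell_2$ or to the unit vector basis of $\ell_F$ (respectively, $\ell_G$), whence $Y\approx\ell_2$ or $Y\approx \ell_F$ (respectively, $Y\approx \ell_G$).

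For the reverse implication, I need to check that both candidates actually occur as complemented subspaces with greedy basis. The unit vector basis of $\ell_F$ (respectively, $\ell_G$) is symmetric, hence greedy, and each space is trivially complemented in itself. For $\ell_2$, applying Remark~\ref{complementedlp} with $p=2$ and $a=1$ to the Orlicz function $F=F^{2,1}$ yields that $\ell_2$ is isomorphic to a complemented subspace of $\ell_F$ (and its unit vector basis is symmetric, hence greedy). To handle $\ell_G$, I would simply dualize: let $P\colon \ell_F\to \ell_F$ be a bounded projection onto a subspace isomorphic to $\ell_2$; then the adjoint $P^{*}\colon \ell_G\to \ell_G$ is a projection whose range is isomorphic to the dual of the range of $P$, i.e., to $\ell_2^{*}=\ell_2$ (this uses reflexivity of $\ell_F$, which is noted in the section).

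The proof contains essentially no obstacle, since Theorem~\ref{MainSuperReflexive} has already done the heavy lifting. The only point that demands a brief verification is the complementation of $\ell_2$ inside $\ell_G$, which, as above, follows by duality from the complementation of $\ell_2$ inside $\ell_F$ supplied by Remark~\ref{complementedlp}.
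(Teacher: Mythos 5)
Your proof is correct and follows exactly the route the paper intends: the paper states that Corollary~\ref{ExistenceComplementedSRbis} needs no further explanation beyond Theorem~\ref{MainSuperReflexive}, being the analogue of Corollary~\ref{ExistenceComplementedlF}, and your forward implication is precisely that deduction, while your converse uses Remark~\ref{complementedlp} (with $p=2$, $a=1$) just as the section itself does when it notes that $\ell_2$ is complemented in $\ell_F$. The only detail you add that the paper leaves implicit is the adjoint-projection argument giving $\ell_2$ complemented in $\ell_G$, which is the right way to handle it since $G$ is not of the form $F^{p,a}$ and Remark~\ref{complementedlp} cannot be applied to it directly.
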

The last result of this section would be in correspondence with Theorem~\ref{ComplementedNotGreedy} in Section~\ref{Orlicz}.    
    
\begin{theorem}\label{ComplementedNotGreedySR}
Consider $F=F^{2,1}$ as in \eqref{OurOrliczFunction} and $G$ as  in \eqref{dualfunction}.  Then both spaces $\ell_F$ and $\ell_G$  have  a complemented 
subspace with an unconditional basis but with no greedy basis.    \end{theorem}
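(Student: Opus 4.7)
The plan is to adapt the template of Theorem~\ref{ComplementedNotGreedy} to the super-reflexive setting, with $\ell_2$ playing the role previously taken by $\ell_1$ and the \cite{CasazzaKalton1996}*{Theorem 4.3}-based machinery of Theorem~\ref{MainSuperReflexive} replacing the lattice anti-Euclidean arguments of \cite{CasazzaKalton1998}. The case of $\ell_G$ will be deduced from the $\ell_F$ case by duality.

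For $\ell_F$, the first step is to invoke Proposition~\ref{MusielakGreedy} with $p=2$ and $a=1$ to select a sequence $(s_n)_{n=1}^\infty$ with $s_n\to 0$ that violates condition (c) of that proposition: the unit vector basis of $\MO_F[s_n]$ is then unconditional but not greedy. By Lemma~\ref{BlockOrliczEquivalentMusielak}(i), $\MO_F[s_n]$ is a complemented subspace of $\ell_F$, and this is the candidate example. Since $\ell_2$ has a unique unconditional basis, $\MO_F[s_n]\not\approx \ell_2$: otherwise the unconditional unit vector basis of $\MO_F[s_n]$ would be equivalent to that of $\ell_2$, hence greedy, contradicting the choice of $(s_n)$. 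Suppose now, for contradiction, that $\MO_F[s_n]$ has a greedy basis. By Corollary~\ref{ExistenceComplementedSRbis}, $\MO_F[s_n]\approx \ell_F$. Then, arguing as in the proof of Theorem~\ref{MainSuperReflexive}, the unit vector basis of $\ell_F$ transported through the isomorphism is a complemented greedy basic sequence of $\ell_F$, hence permutatively equivalent to a normalized constant-coefficient block basic sequence of the unit vector basis of $\MO_F^M[s_n]$ for some $M\in\NN$. By Lemma~\ref{BlockOrliczEquivalentMusielak}(i), this block basis is isometrically equivalent to the unit vector basis of $\MO_F[s_n']$ for some sequence $(s_n')_{n=1}^\infty$; because $s_n\to 0$ and the blocks are of finite length, one has $s_n'\to 0$ as well. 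Since the unit vector basis of $\MO_F[s_n']$ is then greedy (being equivalent to that of $\ell_F$), Proposition~\ref{MusielakGreedy} forces $\MO_F[s_n']=\ell_2$, and so $\ell_F\approx \ell_2$, which is absurd since $F^{2,1}(t)$ is not equivalent to $t^2$ near the origin.

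For $\ell_G$, let $Y=\MO_F[s_n]\subseteq \ell_F$ be the subspace just constructed. Since $\ell_F$ is super-reflexive, $Y$ is reflexive, and $Y^*$ is canonically a complemented subspace of $\ell_F^*=\ell_G$ carrying the biorthogonal unconditional basis. If $Y^*$ admitted a greedy basis, then by \cite{DKKT2003}*{Theorem 5.1} applied to the reflexive space $Y^*$ the dual basis would be a greedy basis of $Y^{**}=Y$, contradicting what was just proved for $\ell_F$. Hence $Y^*$ is the desired complemented subspace of $\ell_G$ with an unconditional basis but no greedy basis.

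The main obstacle is the \emph{scale-persistence} step in the $\ell_F$ argument: one must verify that the block-basis structure theorem actually produces parameters $(s_n')$ that still tend to zero, so that Proposition~\ref{MusielakGreedy} can be leveraged to collapse $\MO_F[s_n']$ to $\ell_2$. This amounts to arranging that the block basic sequence lives inside $\MO_F^M[s_n]$ itself rather than merely inside $\ell_F$, so that its scale parameters inherit the smallness of the original $(s_n)$. This is the analogue of the passage through \cite{CasazzaKalton1998}*{Theorem 3.5} and Lemma~6.4 in the proof of Theorem~\ref{ComplementedNotGreedy}, and it is the most delicate point of the adaptation.
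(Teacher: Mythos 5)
Your construction of the candidate subspace (a sequence $(s_n)_{n=1}^\infty$ tending to zero that violates condition (c) of Proposition~\ref{MusielakGreedy}, so that the unit vector basis of $\MO_F[s_n]$ is unconditional but not greedy, and $\MO_F[s_n]$ is complemented in $\ell_F$ by Lemma~\ref{BlockOrliczEquivalentMusielak}) and the reduction of the $\ell_G$ case to the $\ell_F$ case by duality both agree with the paper. The genuine gap is in the contradiction step for $\ell_F$. After concluding $\MO_F[s_n]\approx\ell_F$, you assert that the unit vector basis of $\ell_F$, transported into $\MO_F[s_n]$, is permutatively equivalent to a normalized constant-coefficient block basic sequence of the unit vector basis of $\MO_F^M[s_n]$, ``arguing as in the proof of Theorem~\ref{MainSuperReflexive}.'' But the proof of Theorem~\ref{MainSuperReflexive} rests on \cite{CasazzaKalton1996}*{Theorem 4.3}, which represents complemented unconditional basic sequences as constant-coefficient block bases of the unit vector basis of $\ell_G$ (equivalently, by duality, of $\ell_F$) itself; it gives no representation as a block basis of the basis of the auxiliary Musielak--Orlicz space $\MO_F^M[s_n]$. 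The tool that achieves the latter in the proof of Theorem~\ref{ComplementedNotGreedy} is \cite{CasazzaKalton1998}*{Theorem 3.5}, and that theorem requires the lattice anti-Euclidean property --- which is precisely what fails for $F=F^{2,1}$, since $\ell_2$ is lattice-complemented in $\ell_F$ (this obstruction is flagged at the start of Section~\ref{superrefExam}). You correctly single out this ``scale-persistence'' step as the delicate point, but you do not supply an argument for it, and the cited machinery cannot produce it.

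The paper closes the argument by a different route that avoids block bases of $\MO_F^M[s_n]$ altogether: by \cite{CasazzaKalton1998}*{Theorem 6.6 (1)} (using $s_n\to 0$) one has $\MO_F[s_n]\approx\ell_2(V_n)$ for a sequence of finite-dimensional spaces $(V_n)_{n=1}^\infty$. A greedy basis $\BB$ of this space equivalent to the unit vector basis of $\ell_F$ is symmetric and weakly null, so by the Bessaga--Pe{\l}czy{\'n}ski selection principle it has a subsequence equivalent to a block basis of the finite-dimensional decomposition; disjointly supported blocks in an $\ell_2$-sum span $\ell_2$ isometrically, and by symmetry $\BB$ itself is then equivalent to the unit vector basis of $\ell_2$, contradicting $\ell_F\not\approx\ell_2$. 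To repair your outline you should replace the block-basis step by this (or an equivalent) argument rather than try to force the parameters $(s_n')$ into existence.
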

 \begin{proof}   Appealing again to \cite{DKKT2003}*{Theorem 5.1} it suffices to prove the result for $\ell_F$.
As in the proof of Theorem~\ref{ComplementedNotGreedy}, pick a sequence $(s_n)_{n=1}^\infty$ convergent to zero such that the unit vector basis of $\MO_F[s_n]$ (which is isomorphic to a complemented subspace of $\ell_F$)  is not greedy.  Since $\ell_2$ has a unique unconditional basis, $\MO_F[s_n]\not\approx \ell_2$. Suppose that $\MO_F[s_n]$  has a greedy basis $\BB$. Then, by Theorem~\ref{MainSuperReflexive}, $\BB$ must be equivalent 
  to  the unit vector basis $ \ell_F$. 
      
By \cite{CasazzaKalton1998}*{Theorem 6.6 (1)} there is a sequence of finite dimensional Banach spaces $(V_n)_{n=1}^\infty$  such that 
 $\MO_F[s_n]\approx \ell_{2}(V_n)$.  Since $\BB$ is weakly null and symmetric, appealing to the Bessaga-Pe{\l}czy{\'n}ski selection principle, we claim that it is equivalent to the unit vector basis of  $\ell_2$. This absurdity proves the result.\end{proof}
    
 \section{Uniqueness of greedy basis in Marcinkiewicz spaces}\label{Lorentz}

\noindent  Throughout this section a \textit{weight} will be a sequence of positive numbers  $\ww = (w_n)_{n=1}^\infty$  
whose \textit{primitive weight} 
$\sss=(s_n)_{n=1}^\infty$ given   by $s_n = \sum_{i=1}^n w_i $, verifies the doubling condition
$s_{2n} \lesssim s_n$ for all $n$.  In  case that  $\ww$  decreases to zero and 
$\sss$  increases to infinity, we consider the \textit{Marcinkiewicz  sequence space}  $m_\sss$,
 consisting of all  sequences $(a_n)_{n=1}^\infty$ in $c_0$  for which the following norm is finite:
$$\Vert (a_n)_{n=1}^\infty \Vert_{m_\sss} = \sup_{n \in \NN} \frac{1}{s_n} \sum_{i=1}^n a_i^*,$$
where $(a^*_n)_{n=1}^\infty$ is the decreasing rearrangement of $(|a_n|)_{n=1}^\infty$.
The \textit{separable part} of  $m_\sss$,  denoted $m^0_\sss$, is the completion of $c_{00}$ in the space $m_\sss$. 

The unit vector  basis 
 is a symmetric basis both for $m^0_\sss$ and, under the natural duality, for its dual space, the Lorentz sequence space $d_{\ww,1}$,  whose  norm
is given by
$$\Vert (a_n)_{n=1}^\infty  \Vert_{\ww,1} = \sum_{n=1}^\infty a_n^* w_n.$$

We say that a weight $\ww$ is  \textit{regular} if  
\begin{equation*} 
 \frac{s_n}{n} \lesssim   w_n, \quad \forall\;n\in\NN.
\end{equation*}
The regularity of  the weight  implies the following equivalence of  quasi-norms
  \begin{equation} \label{eq: weaknorms}
  \sup_{n\in\NN} \frac{a^*_n}{w_n} \lesssim \left\|\sum_{n=1}^\infty a_n \ee_n\right\|_{m_\sss} \le \sup_{n \in\NN} \frac{a^*_n}{w_n}
\end{equation}
 for all $(a_n)_{n=1}^\infty\in c_{00}$. (Note that the right hand-side inequality in \eqref{eq: weaknorms} does not require regularity.)
The inequalities in \eqref{eq: weaknorms} give us an identification between Marcinkiewicz  sequence spaces and weak Lorentz spaces. In general,
given a weight $\vv=(v_n)_{n=1}^\infty$, the space $d_{\vv,\infty}$ consists of all
sequences
$ (a_n)_{n=1}^\infty\in c_0$  such that the quasi-norm
$$\Vert (a_n)_{n=1}^\infty  \Vert_{\vv,\infty} = \sup_{n\in\NN} \left( \sum_{i=1}^n v_i \right) a_n^* $$
is finite. So, if $\ww$ is a regular weight and  $\vv=(1/w_n - 1/w_{n-1})_{n=1}^\infty$,  then $m_\sss = d_{\vv,\infty}$.
In the particular case that  for some $1<p<\infty$ the weight $\ww$ is given by
$w_n:= n^{1/p} - (n-1)^{1/p}$,  so that $\sss=(n^{1/p})_{n=1}^\infty$,
 then $\ww$ is regular and
 the Marcinkiewicz space
$m_\sss$ coincides with the classical  weak-$\ell_q$ space  $\ell_{q,\infty}$, whose natural quasi-norm is given by
$$ \| (a_n)_{n=1}^\infty\|_{q, \infty} = \sup_{n\in\NN}  a_n^*\, n^{1/q},$$
where $q = (p-1)/p$.

We will use the following result, which illustrates the connection between Lorentz sequence spaces and greedy-like  bases. We would like to remark that it  remains valid  for  quasi-greedy bases.
\begin{lemma}\label{DemocracyEmbedding} Let $\BB=(\xx_k)_{k=1}^\infty$ be a greedy basis is a Banach space $\XX$. Let $\ww=(w_n)_{n=1}^\infty$  be a  weight and denote by $(s_n)_{n=1}^\infty$ its primitive weight.
\begin{itemize}
\item[(a)]  $\XX$ embeds in $d_{\ww,\infty}$
  via $\BB$, i.e.,
$$
\Vert (a_k)_{k=1}^\infty \Vert_{\ww,\infty}    \lesssim  \left\Vert \sum_{k=1}^\infty  a_k \xx_k\right\Vert,\quad \forall\; (a_{k})_{n=1}^\infty \in c_{00},$$ 
 if and only if
\[
s_n \lesssim \varphi_l[\BB,\XX](n), \quad \forall\,n\in\NN.\]

\item[(b)]   $d_{\ww,1}$ embeds in $\XX$ via the $\BB$, i.e.,
\[
\left\Vert \sum_{k=1}^\infty a_k \xx_n\right\Vert\lesssim \left\Vert (a_k)_{k=1}^\infty \right\Vert_{\ww,1}\,\quad\forall\;(a_k)_{k=1}^\infty\in c_{00},\]
 if and only if
\[ \varphi_u[\BB,\XX](n)  \lesssim  s_n,\quad\forall \;n\in\NN.\]
\end{itemize}
 \end{lemma}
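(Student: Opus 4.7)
The plan is to establish both necessity directions simultaneously by a testing argument, and then handle the two sufficiency directions by different standard techniques adapted to the nature of $\varphi_l[\BB,\XX]$ and $\varphi_u[\BB,\XX]$.

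For the necessity in both (a) and (b), I plug the indicator $(a_k)=\mathbf{1}_A(k)$ of a finite set $A\subseteq\NN$ with $|A|=n$ into the assumed embedding. A direct computation using that the decreasing rearrangement of $\mathbf{1}_A$ is $n$ ones followed by zeros, together with the monotonicity of $(s_n)_{n=1}^\infty$, yields $\|\mathbf{1}_A\|_{\ww,\infty}=\|\mathbf{1}_A\|_{\ww,1}=s_n$. The embedding in (a) then reads $s_n\lesssim\|\sum_{k\in A}\xx_k\|$ for every such $A$; applying this with $B$ in place of $A$ for any $|B|\ge n$, and using $s_{|B|}\ge s_n$, then taking the infimum, gives $s_n\lesssim\varphi_l[\BB,\XX](n)$. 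Analogously, the embedding in (b) produces $\|\sum_{k\in A}\xx_k\|\lesssim s_{|A|}\le s_n$, and taking the supremum over $|A|\le n$ delivers $\varphi_u[\BB,\XX](n)\lesssim s_n$.

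For the sufficiency of (a), the key ingredient is the pointwise democratic estimate
\[
a_n^*\,\varphi_l[\BB,\XX](n)\lesssim\left\|\sum_k a_k\xx_k\right\|,\qquad n\in\NN,
\]
valid for any unconditional democratic (hence greedy) basis. I will derive it by letting $A$ be the set of $n$ indices of largest $|a_k|$, applying the unconditional-multiplier theorem with the $\ell_\infty$-bounded sequence $\epsilon_k=(a_n^*/a_k)\operatorname{sign}(a_k)\mathbf{1}_A(k)$ to obtain $a_n^*\|\sum_{k\in A}\operatorname{sign}(a_k)\xx_k\|\lesssim\|\sum_k a_k\xx_k\|$, and then using unconditionality once more to replace the signed combination by the unsigned sum $\sum_{k\in A}\xx_k$, whose norm is at least $\varphi_l[\BB,\XX](n)$ since $|A|=n$. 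Combining this with the hypothesis $s_n\lesssim\varphi_l[\BB,\XX](n)$ yields $s_n a_n^*\lesssim\|\sum_k a_k\xx_k\|$ for every $n$, and taking the supremum in $n$ completes the embedding into $d_{\ww,\infty}$.

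For the sufficiency of (b), I use layer decomposition along the decreasing rearrangement. Given $(a_k)\in c_{00}$, order its nonzero support as $k_1,\dots,k_N$ with $|a_{k_j}|=a_j^*$, and set $\delta_i=a_i^*-a_{i+1}^*$ (with $a_{N+1}^*:=0$) and $\sigma_i=\sum_{j\le i}\operatorname{sign}(a_{k_j})\xx_{k_j}$. Telescoping produces the Abel-type decomposition $\sum_k a_k\xx_k=\sum_{i=1}^N\delta_i\sigma_i$; by unconditionality followed by the hypothesis, $\|\sigma_i\|\lesssim\|\sum_{j\le i}\xx_{k_j}\|\le\varphi_u[\BB,\XX](i)\lesssim s_i$. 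The triangle inequality and summation by parts (using $s_i-s_{i-1}=w_i$ and $s_0=0$) then yield
\[
\left\|\sum_k a_k\xx_k\right\|\lesssim\sum_{i=1}^N(a_i^*-a_{i+1}^*)s_i=\sum_{i=1}^N a_i^*w_i=\|(a_k)\|_{\ww,1}.
\]
The main hurdle is the multiplier step in the sufficiency of (a): translating the democracy hypothesis into a coordinate-wise upper bound for $a_n^*$ requires the full strength of the unconditional-multiplier theorem, rather than just a manipulation of the fundamental function; the rest of the argument is a clean combination of monotonicity and telescoping.
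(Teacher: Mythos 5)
Your proof is correct and complete. Both necessity directions (testing on indicator vectors $\mathbf{1}_A$, computing $\Vert \mathbf{1}_A\Vert_{\ww,\infty}=\Vert \mathbf{1}_A\Vert_{\ww,1}=s_{|A|}$, and taking the appropriate infimum or supremum) are fine, the multiplier argument giving $a_n^*\,\varphi_l[\BB,\XX](n)\lesssim\bigl\Vert\sum_k a_k\xx_k\bigr\Vert$ is the standard and correct way to get the lower estimate in (a), and the Abel/layer decomposition with $\Vert\sigma_i\Vert\lesssim\varphi_u[\BB,\XX](i)\lesssim s_i$ followed by summation by parts is exactly what is needed for (b). For comparison: the paper does not write out a proof at all; it only says that both parts ``can be obtained rewriting carefully the proof of [AA2015, Theorem 2.1]'', so you have supplied the details the authors chose to omit, and your argument is essentially the one underlying that reference. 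The one substantive difference is that you invoke the full unconditional multiplier theorem (twice in part (a) and once in part (b) to pass from $\sigma_i$ to the unsigned sum), whereas the paper remarks just before the lemma that the result ``remains valid for quasi-greedy bases''; your argument as written does not yield that stronger version, since quasi-greedy bases need not be unconditional and the multiplier step would have to be replaced by estimates on the truncation/greedy operators. For the statement as actually posed (greedy, hence unconditional and democratic, bases) this is irrelevant, and your proof stands.
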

\begin{proof} Part  (b) requires a little bit more work than part (a) but both parts  can be obtained rewriting carefully the proof of \cite{AA2015}*{Theorem2.1}. 
\end{proof}

The study of symmetric bases in Lorentz sequence spaces leads to consider basic sequences whose terms are  equidistributed disjointly supported sequences. Two sequences $(a_n)_{n=1}^\infty$ and $(b_n)_{n=1}^\infty$  of real numbers are said to be  \textit{equidistributed} if $(a_n^{\ast})_{n=1}^{\infty}=(b_n^*)_{n=1}^{\infty}$. In this direction, Altshuler el al.\   proved in \cite{ACL1973} the following theorem. 
Recall that a weight $(s_n)_{n=1}^\infty$ is called \textit{submultiplicative} if
\begin{equation*} 
 \sup_{n,k>0} \frac{s_{nk}}{s_n s_k} < \infty. \end{equation*}
Note that $(n^{1/p})_{n=1}^\infty$  is a submultiplicative weight for each $1<p<\infty$.
\begin{theorem}[cf. \cite{LinTza1977}*{ Theorem 4.e.5}]\label{Altshuler}Let $\ww$ be a weight  decreasing to zero 
such that its primitive weight is submultiplicative. Then any equidistributed disjointly supported basic sequence in $d_{\ww,1}$ is equivalent to the unit vector basis 
of $ d_{\ww,1}$.
\end{theorem}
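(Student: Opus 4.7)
The plan is to compute $\|\sum a_n \xx_n\|_{\ww,1}$ directly by exploiting the distribution-function formula
\[
\|(c_j)_{j=1}^\infty\|_{\ww,1}=\int_0^\infty s_{|\{j\tq |c_j|>\lambda\}|}\,d\lambda,
\]
together with two properties of the primitive weight $\sss$: concavity of $(s_n)$ (which follows from $(w_n)$ being decreasing and $s_0=0$, hence gives the subadditivity $s_{m_1+\cdots+m_r}\le s_{m_1}+\cdots+s_{m_r}$), and the assumed submultiplicativity $s_{nk}\lesssim s_n s_k$. Let $(\xx_n)_{n=1}^\infty$ be an equidistributed disjointly supported basic sequence in $d_{\ww,1}$ and let $(b_k)_{k=1}^\infty$ denote the common decreasing rearrangement of each $|\xx_n|$, normalized so that $\|\xx_n\|_{\ww,1}=\sum_k b_k w_k =1$. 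By unconditionality of the symmetric basis, it suffices to consider $y:=\sum_{n=1}^N a_n \xx_n$ with $a_n\ge 0$ arranged in decreasing order. The disjointness of supports implies that the multiset of nonzero entries of $y$ is $\{a_n b_k\tq 1\le n\le N,\,k\ge 1\}$, so
\[
\|y\|_{\ww,1}=\int_0^\infty s_{F(\lambda)}\,d\lambda,\qquad F(\lambda)=|\{(n,k)\tq a_n b_k>\lambda\}|.
\]

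For the lower bound, the top value $b_1$ of each of the $N$ blocks contributes the entries $a_1 b_1,\dots,a_N b_1$, which lie among the $N$ largest entries of $y$, so $y_n^*\ge a_n b_1$ for $n\le N$ and
\[
\|y\|_{\ww,1}\ge b_1\sum_{n=1}^N a_n w_n = b_1\,\Bigl\|\sum_{n=1}^N a_n \ee_n\Bigr\|_{\ww,1}.
\]

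The upper bound is the heart of the argument. After rescaling, assume $a_n,b_k\in(0,1]$ and partition the coefficient indices dyadically by $I_i=\{n\tq 2^{-i-1}<a_n\le 2^{-i}\}$ for $i\ge 0$. If $n\in I_i$ and $a_n b_k>\lambda$ then $b_k>2^i\lambda$, so
\[
F(\lambda)\le \sum_{i\ge 0}|I_i|\,B(2^i\lambda),\qquad B(\nu):=|\{k\tq b_k>\nu\}|.
\]
Applying first subadditivity and then submultiplicativity of $s$ yields
\[
s_{F(\lambda)}\le \sum_{i\ge 0} s_{|I_i|\,B(2^i\lambda)}\lesssim \sum_{i\ge 0} s_{|I_i|}\,s_{B(2^i\lambda)}.
\]
Integrating in $\lambda$ and substituting $\mu=2^i\lambda$ inside each $i$-th summand factors out $\int_0^\infty s_{B(\mu)}\,d\mu=\|b\|_{\ww,1}=1$, leaving $\|y\|_{\ww,1}\lesssim \sum_{i\ge 0}2^{-i}\,s_{|I_i|}$. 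Since $|I_i|\le A(2^{-i-1})$ for $A(\mu):=|\{n\tq a_n>\mu\}|$, a Fubini swap
\[
\sum_{i\ge 0}2^{-i}\,s_{A(2^{-i-1})}=\sum_n w_n \sum_{i\tq a_n>2^{-i-1}} 2^{-i}\approx \sum_n a_n w_n = \Bigl\|\sum a_n \ee_n\Bigr\|_{\ww,1}
\]
completes the upper estimate and, together with the lower bound, proves the equivalence.

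The main obstacle is to combine concavity and submultiplicativity in the right order: applying only concavity to the raw representation $F(\lambda)=\sum_n B(\lambda/a_n)$ would yield the crude triangle bound $\|y\|_{\ww,1}\le \sum_n a_n$, which misses the democratic structure of $d_{\ww,1}$, while submultiplicativity $s_{mn}\lesssim s_m s_n$ can only be applied to genuine \emph{products}, not sums. The dyadic partition of $(a_n)$ is precisely the device that converts the sum in $F(\lambda)$ into a sum of products $|I_i|\cdot B(2^i\lambda)$, enabling the clean factorization into an $a$-factor and a $b$-factor; checking that the resulting $a$-factor actually reproduces $\sum_n a_n w_n$ (rather than some larger quantity) is where the Fubini swap is essential.
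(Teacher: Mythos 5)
Your argument is correct, but there is nothing in the paper to compare it with: Theorem~\ref{Altshuler} is not proved in the article, it is imported from Altshuler--Casazza--Lin \cite{ACL1973} via \cite{LinTza1977}*{Theorem 4.e.5}. Judged on its own, your proof is sound and self-contained. The layer-cake identity $\Vert (c_j)_j\Vert_{\ww,1}=\int_0^\infty s_{F(\lambda)}\,d\lambda$ is the right normal form; the lower bound is fine (though the entries $a_1b_1,\dots,a_Nb_1$ need not literally be the $N$ largest entries of $y$, the correct inference $y_n^*\ge a_nb_1$ only needs that they form a size-$N$ sub-multiset whose $n$-th largest element is $a_nb_1$); and the dyadic grouping of the coefficients is exactly the device that converts $F(\lambda)=\sum_n B(\lambda/a_n)$ into the products $|I_i|\,B(2^i\lambda)$ to which submultiplicativity can be applied, after which the substitution $\mu=2^i\lambda$ and the Fubini swap recover $\sum_n a_nw_n$ up to a factor of $4$. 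All constants are uniform, depending only on the submultiplicativity constant, on $b_1$ and on $\Vert (b_k)_k\Vert_{\ww,1}$, so the equivalence holds with a constant independent of $N$ and of the coefficients. This is the same mechanism that drives the classical proof (concavity of $\sss$ giving subadditivity, plus $s_{nk}\lesssim s_ns_k$), but your rendition via distribution functions is cleaner and worth keeping. Two points to make explicit in a final write-up: only finitely many $I_i$ are nonempty because $y$ is a finite linear combination, so the subadditivity step is applied to a finite sum; and $B(\mu)<\infty$ for every $\mu>0$ because $(b_k)_{k=1}^\infty$ decreases to zero, which follows from $\sum_k b_kw_k<\infty$ together with the section's standing assumption that $\sss$ increases to infinity.
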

 Next we  prove a lemma stating that  equidistributed disjointly supported basic sequences are {\it sufficiently far} from being equivalent to the unit vector basis of $\ell_1$, even without imposing the submultiplicative condition on the weight. 

\begin{lemma}\label{newlemma} Let $\ww$ be a weight decreasing to zero. Let $\BB$ be
an  equidistributed disjointly supported basic sequence in $d_{\ww,1}$. Then there is a weight $\vv$
decreasing to zero such that $d_{\vv,1}$ embeds in $d_{\ww,1}$ via $\BB$. 
\end{lemma}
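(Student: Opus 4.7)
Let $(y_n)_{n=1}^\infty$ denote the common non-increasing rearrangement of $|\xx_k|$, which by the equidistribution hypothesis is the same sequence for every $k$, so that $\|\xx_k\|_{\ww,1}=\sum_{n=1}^\infty y_n w_n$ for every $k\in\NN$. Our candidate for the target weight is
\[
v_k:=\sum_{n=1}^\infty y_n\, w_{kn}, \qquad k=1,2,\dots
\]
The strategy is to (a) verify that $\vv=(v_k)_{k=1}^\infty$ is a weight decreasing to zero, and (b) apply Lemma~\ref{DemocracyEmbedding}(b) to $\BB$, viewed as a symmetric (hence greedy) basis of its own closed linear span inside $d_{\ww,1}$.

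For (a), positivity is immediate ($v_k\ge y_1 w_k>0$); the monotonicity of $\ww$ gives $w_{kn}\ge w_{(k+1)n}$ and therefore $v_k\ge v_{k+1}$; and dominated convergence (using $w_{kn}\le w_n$ and $\sum_n y_n w_n=\|\xx_1\|_{\ww,1}<\infty$) yields $v_k\to 0$. The doubling of the primitive $t_N:=\sum_{k=1}^N v_k$ follows from the monotonicity of $(v_k)$ alone:
\[
t_{2N}-t_N=\sum_{k=N+1}^{2N} v_k\le N v_{N+1}\le t_N,
\]
so $t_{2N}\le 2t_N$.

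For (b), we bound the upper democracy function of $\BB$. Since the $\xx_k$ are disjointly supported and equidistributed, the non-increasing rearrangement of $\sum_{k=1}^N\xx_k$ is obtained by repeating each $y_m$ exactly $N$ times, so
\[
\varphi_u[\BB,d_{\ww,1}](N)=\Big\|\sum_{k=1}^N \xx_k\Big\|_{\ww,1}=\sum_{m=1}^\infty y_m\bigl(s_{mN}-s_{(m-1)N}\bigr).
\]
The crucial step is the index-wise estimate
\[
s_{mN}-s_{(m-1)N}=\sum_{i=1}^N w_{(m-1)N+i}\le \sum_{i=1}^N w_{im},
\]
valid because $(m-1)N+i\ge im$ (equivalent to $(m-1)(N-i)\ge 0$) and $\ww$ is non-increasing. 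Multiplying by $y_m$, summing over $m$, and exchanging the order of summation gives $\varphi_u[\BB,d_{\ww,1}](N)\le t_N$, and Lemma~\ref{DemocracyEmbedding}(b) then produces the desired embedding.

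The main obstacle is guessing the right $\vv$. The most natural first proposal, $v_k=\varphi_u[\BB,d_{\ww,1}](k)-\varphi_u[\BB,d_{\ww,1}](k-1)$, yields the correct primitive exactly, but the discrete function $k\mapsto\varphi_u[\BB,d_{\ww,1}](k)$ need not be concave (even though its smooth analogue is), so this $\vv$ can fail to be non-increasing and the doubling of its primitive becomes problematic. The tensorial choice $v_k=\sum_n y_n w_{kn}$ instead builds in monotonicity and doubling for free while still majorizing the democracy function via the one-line index comparison above, and it handles uniformly both the finite- and infinite-support cases for $(y_n)$.
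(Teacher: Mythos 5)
Your proof is correct, and it follows the same overall strategy as the paper: produce an explicit weight $\vv$ decreasing to zero whose primitive dominates $\varphi_u[\BB,d_{\ww,1}]$, then invoke Lemma~\ref{DemocracyEmbedding}(b) applied to $\BB$ as a symmetric basis of its closed span. The only genuine divergence is the choice of $\vv$: you take the ``tensorial'' weight $v_k=\sum_n y_n w_{kn}$, whose monotonicity and decay are immediate, at the price of the index comparison $(m-1)N+i\ge im$. The paper instead takes exactly the increments $v_N=\varphi_u[\BB,d_{\ww,1}](N)-\varphi_u[\BB,d_{\ww,1}](N-1)$ --- precisely the ``natural first proposal'' you discard --- and justifies it by Abel summation: writing $\varphi_u[\BB,d_{\ww,1}](N)=\sum_j(y_j-y_{j+1})s_{jN}$ gives $v_N=\sum_j(y_j-y_{j+1})\sum_{n=j(N-1)+1}^{jN}w_n$, and since each inner sum is a block of $j$ consecutive weights that shifts entirely to the right as $N$ grows, $(v_N)_{N}$ is non-increasing and, by dominated convergence, tends to zero. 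So your stated worry that the discrete fundamental function need not be concave is unfounded in this setting: for an equidistributed disjointly supported sequence in $d_{\ww,1}$ it is concave. Both routes are valid and of comparable length; the paper's choice yields the smallest admissible weight (the exact increments), while yours yields a pointwise larger weight that still satisfies $\varphi_u[\BB,d_{\ww,1}](N)\le t_N$ and hence does the job. Your explicit verification of the doubling condition for the primitive of $\vv$ (which is part of the paper's definition of a weight, and which the paper leaves implicit) is a welcome addition.
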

\begin{proof}
Let $\yy=(a_n)_{n=1}^\infty$ be a decreasing sequence of non negative numbers such that each element in $\BB$ is equidistributed with $\yy$. Then,
using Abel's summation formula, for all $N$ we have
\[
\varphi_l[\BB,d_{\ww,1}](N)=\sum_{j=1}^\infty a_j \sum_{n=(j-1)N+1}^{jN} w_n=\sum_{j=1}^\infty (a_j-a_{j+1})  \sum_{n=1}^{jN} w_n.
\]
Therefore
\[
v_N:=\varphi_l[\BB,d_{\ww,1}](N)-\varphi_l[\BB,d_{\ww,1}](N-1)=\sum_{j=1}^\infty (a_j-a_{j+1})
  \sum_{n=1+j(N-1)}^{jN} w_n.
\]
Notice that $( \sum_{n=1+j(N-1)}^{jN} w_n)_{N=1}^\infty$ decreases to zero. Then, by the dominated convergence theorem, $(v_N)_{N=1}^\infty$ decreases to zero.
The proof is over   invoking   Lemma~\ref{DemocracyEmbedding}(b).
\end{proof}

\begin{lemma} \label{lem: uncbasicseqfact} Let $\ww$ be a weight decreasing to zero such that its primitive weight $\sss$ increases to infinity.  Let $\BB$ be a semi-normalized unconditional  basic sequence in $d_{\ww,1}$. Then 
\begin{itemize} 
\item[(a)] $\BB$ has a subsequence which is equivalent either to the unit vector basis of $\ell_1$ or to a disjointly supported equidistributed sequence.
\item[(b)] If  $\sss$ is submultiplicative then $\BB$ has a subsequence which is equivalent either to the unit vector basis of $\ell_1$ or to the unit vector basis $d_{\ww,1}$.
  \end{itemize}
 \end{lemma}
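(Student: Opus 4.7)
The plan is to reduce part (a) to the analysis of a normalized block basis and then split into cases based on the asymptotic shape of the blocks; part (b) will follow immediately by combining (a) with Theorem~\ref{Altshuler}. First, since $\BB$ is unconditional, the classical dichotomy (Rosenthal's $\ell_{1}$ theorem together with the fact that a weakly Cauchy unconditional basic sequence is weakly null) allows us to assume, after passing to a subsequence, that either $\BB$ is equivalent to the unit vector basis of $\ell_{1}$ (in which case (a) is already proved) or $\BB$ is weakly null. In the weakly null case, the Bessaga--Pe{\l}czy{\'n}ski selection principle combined with the perturbation-stability of unconditional bases lets us pass to a further subsequence equivalent to a normalized disjointly supported block basis $(y_{k})_{k=1}^{\infty}$ of the unit vector basis of $d_{\ww,1}$.

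For the shape analysis, I would consider, for each $k$, the decreasing rearrangement $y_{k}^{*}=(y_{k}^{*}(j))_{j\ge 1}$ of $|y_{k}|$. Each $y_{k}^{*}$ is finitely supported with $\|y_{k}^{*}\|_{d_{\ww,1}}=1$ and $y_{k}^{*}(1)\le 1/w_{1}$, so $(y_{k}^{*})_{k}$ is bounded in $\ell_{\infty}$. A diagonal argument then gives a subsequence along which $y_{k}^{*}(j)\to \alpha_{j}$ for every $j$. Split into two cases: (i) $\alpha:=(\alpha_{j})\not\equiv 0$, and (ii) $\alpha\equiv 0$. In case (i), the doubling condition on $\sss$ controls the tails of $y_{k}^{*}$, so one may refine the subsequence and assume $y_{k}^{*}\to\alpha$ in norm in $d_{\ww,1}$. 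Using disjointness of supports and the freedom to relocate each $y_{k}$ to a sufficiently late, disjoint block of indices carrying the shape $\alpha$, I obtain a small perturbation of $(y_{k})$ which is equivalent to $(y_{k})$ and is a disjointly supported equidistributed sequence, as desired.

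In case (ii) one has $\|y_{k}\|_{\infty}\to 0$. I would replace each $y_{k}$ by a constant-coefficient block $\tilde y_{k}=c_{k}\mathbf{1}_{B_{k}}$ of comparable norm (a standard flattening argument in Lorentz spaces, using that the decreasing rearrangement is essentially constant on each level set at scale comparable to $\|y_{k}\|_{\infty}$), and then extract a sparse subsequence in which the block lengths $|B_{k}|$ grow so rapidly that $s_{|B_{1}|+\cdots+|B_{k-1}|}/s_{|B_{k}|}$ tends to zero. A direct computation of the decreasing rearrangement of a disjoint sum $\sum_{k}a_{k}\tilde y_{k}$, together with the doubling property of $\sss$, then shows $\bigl\|\sum a_{k}\tilde y_{k}\bigr\|_{d_{\ww,1}}\approx \sum |a_{k}|$, so $(\tilde y_{k})$, and hence $(y_{k})$, has a subsequence equivalent to the unit vector basis of $\ell_{1}$.

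The main obstacle is case (ii): one must ensure that after the flattening step the sequence truly behaves like $\ell_{1}$ under arbitrary linear combinations, not merely under combinations with positive decreasing coefficients. This forces the block lengths to be chosen so that for every permutation induced by the ordering of $c_{k}|a_{k}|$, the sorted rearrangement of $\sum a_{k}\tilde y_{k}$ attributes to each $\tilde y_{k}$ a contribution comparable to $|a_{k}|$; the doubling hypothesis on $\sss$ is precisely what lets one absorb the cross-terms. Once (a) is in hand, (b) is immediate: the subsequence produced is either the $\ell_{1}$-subsequence of (a), or an equidistributed disjointly supported basic sequence, which by Theorem~\ref{Altshuler}, under the submultiplicativity hypothesis on $\sss$, is equivalent to the unit vector basis of $d_{\ww,1}$.
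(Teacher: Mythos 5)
Your overall architecture (reduce to a weakly null block basis via the Rosenthal/Bessaga--Pe{\l}czy{\'n}ski route, extract a limiting ``shape'' of the decreasing rearrangements, and deduce (b) from (a) together with Theorem~\ref{Altshuler}) matches the paper's, but your case (i) contains a genuine gap. Pointwise convergence $y_k^*(j)\to\alpha_j$ with $\alpha\not\equiv 0$ does \emph{not} imply, even after passing to a subsequence, that $y_k^*\to\alpha$ in the norm of $d_{\ww,1}$, and the doubling condition on $\sss$ does not repair this. Concretely, take $w_n=1/n$ and $y_k=\ee_{m_k}+\epsilon_k\mathbf{1}_{A_k}$ with all supports disjoint, $|A_k|=N_k\to\infty$ and $\epsilon_k=1/s_{N_k}\to 0$: then $y_k^*\to(1,0,0,\dots)$ pointwise, while $\Vert y_k^*-(1,0,0,\dots)\Vert_{\ww,1}\approx 1$ for every $k$, so no subsequence converges in norm. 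In this example $(y_k)$ is in fact equivalent to the unit vector basis of $\ell_1$, not to an equidistributed disjointly supported sequence of shape $\alpha$ (which here would be the unit vector basis of $d_{\ww,1}$); your case (i) therefore both asserts a false convergence and misattributes the conclusion. This is precisely where the real work of the lemma lies.

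The paper resolves the point by writing $\xx_k=\yy_k+\zz_k$, where $\yy_k$ is the disjointly placed limit shape obtained from an Arzel\`a--Ascoli argument and $\zz_k$ is a residual satisfying only $\Vert\zz_k\Vert_\infty\to 0$, not $\Vert\zz_k\Vert_{\ww,1}\to 0$. The dichotomy is then on $\liminf_k\Vert\zz_k\Vert_{\ww,1}$: if it is zero, a perturbation argument gives the equidistributed alternative; if it is positive, $(\zz_k)$ is equivalent to the unit vector basis of $\ell_1$ by \cite{LinTza1977}*{Proposition 4.e.3}, and one must still show that adding $\yy_k$ back does not destroy the $\ell_1$ lower bound. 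That is the role of Lemma~\ref{newlemma}: the equidistributed part $(\yy_k)$ is dominated by the unit vector basis of a Lorentz space $d_{\vv,1}$ with $\vv$ decreasing to zero, so its contribution is asymptotically negligible against the $\ell_1$ lower bound coming from $(\zz_k)$, the finitely many exceptional coordinates being absorbed because $\BB$ is semi-normalized. None of this appears in your proposal. Finally, your case (ii) is essentially \cite{LinTza1977}*{Proposition 4.e.3} (a semi-normalized disjointly supported sequence with $\Vert y_k\Vert_\infty\to 0$ has an $\ell_1$-subsequence); your flattening-plus-sparse-blocks sketch acknowledges but does not actually resolve the cross-term issue, so you should either cite that result, as the paper does, or supply the missing computation.
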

\begin{proof} Taking into account Theorem~\ref{Altshuler}, we need only  prove  (a).  If $\BB=(\xx_k)_{k=1}^\infty$ is not weakly null then, by the unconditionality of $\BB$ we get  that it has a subsequence  equivalent to the unit vector basis of $\ell_1$. So we may assume that $\BB$ is weakly null.  Appealing to the Bessaga-Pe{\l}czy{\'n}ski selection principle,  we may assume that $\BB$ is a block basis of the unit vector basis. 
 We can suppose also the that there is an infinite subset $B$ of $\NN$ such that $x_{k,n}=0$ for all $k\in\NN$ and $n\in B$.
 Consider $(B_k)_{k=1}^\infty$ a partition of $\NN$ such that $\supp \xx_k \subseteq B_k$ and $B_k \setminus \supp\xx_k$ is infinite. 
 Denote,  for each $k\in\NN$, $\xx_k=(x_{k,n})_{n=1}^\infty$.
 Let $\pi_k\colon \NN \to S_k$ bijective and such that the  absolute value of $\xx_k':=(x_{k,\pi_k(j)})_{j=1}^\infty$   is a decreasing sequence. Notice that
\[
|x_{k,\pi_k(j)}| \le \frac{ \Vert \xx_k \Vert_{\ww,1} }{s_j}, \quad j,k\in\NN.
\]
Therefore, regarding  $(\xx_k')_{k=1}^\infty$ as a sequence of functions defined in the compact space $\NN\cup\{\infty\}$,  $(\xx_k')_{k=1}^\infty$ is 
 an equicontinuous and uniformly bounded sequence. By the Arzel\`a-Ascoli theorem,  passing again to a subsequence, we can suppose that there 
is $\yy=(y_j)_{j=1}^\infty \in c_0$ such that $\lim_k \xx_k'=\yy$ uniformly.  By Fatou's Lemma, $\yy\in d_{\ww,1}$.  

For each $k\in\NN$ let $\yy_k=(y_{k,n})_{n=1}^\infty$ be the sequence given by 
$y_{k,\pi_k(j)}=y_j$ for all $j\in\NN$, and $y_{k,n}=0$ if $n\notin B_k$. Let $\zz_k=\xx_k -\yy_k$.  We have:

\begin{itemize}

 \item $(\yy_k)_{k=1}^\infty$  is a disjointly supported sequence in $d_{\ww,1}$,

 \item $\lim_k \Vert  \zz_k \Vert_\infty=\lim_k \Vert  \xx_k'-\yy \Vert_\infty=0$, and

\item $\xx_k=\yy_k+\zz_k$.
\end{itemize}

\noindent If
 $\liminf_k \|\zz_k\|_{\ww,1} = 0$, then $\BB$ has a subsequence which is an arbitrarily small perturbation of a subsequence of  $(\yy_k)_{k=1}^\infty$
and hence  is equivalent to $(\yy_k)_{k=1}^\infty$. If $\liminf_k \|\zz_k\|_{\ww,1} > 0$ then, appealing to
\cite[Prop. 4.e.3]{LinTza1977}
 and passing to a subsequence, we can suppose that $(\zz_k)_{k=1}^\infty$ is equivalent to the unit vector basis of $\ell_1$. Therefore, 
\[
 A\left\Vert \sum_{k=1}^\infty a_k \ee_k\right\Vert_{1} \le \left\Vert \sum_{k=1}^\infty a_k \zz_k\right\Vert_{\ww,1}
 \]
for some constant $A$ and for all sequences  $(a_k)_{k=1}^\infty$ in $c_{00}$. 
 Furthermore,  by Lemma~\ref{newlemma}, there is a weight $\vv=(v_n)_{n=1}^\infty$ decreasing to zero and a constant $B$ such that for all   $(a_k)_{k=1}^\infty \in c_{00}$,
 \[
 \left\Vert \sum_{k=1}^\infty a_k \yy_k\right\Vert_{\ww,1}  \le B \left\Vert \sum_{k=1}^\infty a_k \ee_k\right\Vert_{\vv,1}.
\]
 Since $(\xx_k)_{k=1}^\infty$ is semi-normalized there is a constant $C$ such that for all $j\in\NN$  and  all $(a_k)_{k=1}^\infty \in c_{00}$,
\[
|a_j|  \le C \left\Vert \sum_{k=1}^\infty a_k \xx_k\right\Vert_{\ww,1}.
\]
 Combining, we obtain that $(\xx_k)_{k=1}^\infty$ is equivalent to the unit vector basis of $\ell_1$. 
 
 Indeed, 
let $N\in\NN$  be minimal with the property that $D:=A -B v_N>0$. 
Let  $(a_k)_{k=1}^\infty \in c_{00}$. Denoting by $(a_k^*)_{k=1}^\infty$ the decreasing rearrangement of 
$(|a_k|)_{k=1}^\infty$,
\begin{align*}
\left\Vert \sum_{k=1}^\infty a_k \xx_k\right\Vert_{\ww,1} 
&\ge A\sum_{k=1}^\infty a_k^*
- B \sum_{k=1}^\infty a_k^* v_k  \\
&\ge D \sum_{k=N}^\infty a_k^*+ \sum_{k=1}^{N-1} (A- B v_k)a_k^*\\
&= D\sum_{k=1}^\infty |a_k|- B \sum_{k=1}^{N-1} (v_k-v_N)a_k^*\\
& \ge D \sum_{k=1}^\infty |a_k|- BC\sum_{k=1}^{N-1}(v_k-v_N)\left\Vert \sum_{k=1}^\infty a_k \xx_k\right\Vert_{\ww,1}.
\end{align*}
This yields the desired result.
\end{proof} 

\begin{proposition} \label{prop: uncbasisfact}  Let $\ww$ be a weight decreasing to zero such that its primitive weight $\sss=(s_n)_{n=1}^\infty$ increases to infinity.
 Let $\BB$ be a semi-normalized unconditional basis of $d_{\ww,1}$
such that 
\begin{equation}\label{assumptionondemocracyf}
\varphi_u[\BB,d_{\ww,1}](n)\lesssim s_n,\quad \forall n\in \NN.\end{equation} Then 
$\BB$ is equivalent to the unit vector basis of $d_{\ww,1}$. \end{proposition}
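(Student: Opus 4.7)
The plan is to establish the two-sided estimate $\|\sum_k a_k\xx_k\|\approx\|(a_k)\|_{\ww,1}$ on $c_{00}$, which is equivalent to $\BB\sim\EE$. For the upper estimate, by the unconditionality of $\BB$ I may assume $a_k\ge 0$; letting $A_j$ denote the indices of the $j$ largest entries of $(a_k)$ and using the Abel decomposition $(a_k)=\sum_{j\ge 1}(a_j^*-a_{j+1}^*)\mathbf{1}_{A_j}$, the triangle inequality combined with the hypothesis $\varphi_u[\BB,d_{\ww,1}](j)\lesssim s_j$ yields
\[
\left\|\sum_k a_k\xx_k\right\|\le \sum_{j\ge 1}(a_j^*-a_{j+1}^*)\left\|\sum_{k\in A_j}\xx_k\right\|\lesssim \sum_{j\ge 1}(a_j^*-a_{j+1}^*)s_j=\sum_j a_j^* w_j=\|(a_k)\|_{\ww,1}.
\]
Equivalently, the natural change-of-basis operator $T\colon d_{\ww,1}\to d_{\ww,1}$ defined by $T\ee_k=\xx_k$ is bounded, and $T$ has dense range because $\BB$ is a basis of $d_{\ww,1}$.

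The main task is to prove the matching lower democracy estimate $\varphi_l[\BB,d_{\ww,1}](n)\gtrsim s_n$. The key ingredient is Lemma~\ref{lem: uncbasicseqfact}(a): any semi-normalized unconditional basic sequence in $d_{\ww,1}$ has a subsequence equivalent to either the unit vector basis of $\ell_1$ or to a disjointly supported equidistributed basic sequence. The $\ell_1$ alternative is excluded by the assumption $\varphi_u[\BB](n)\lesssim s_n$, because such a subsequence would force $\varphi_u[\BB](n)\gtrsim n$, contradicting $s_n=o(n)$ (which follows from $w_n\to 0$ by Ces\`aro). Hence every subsequence of $\BB$ contains a further subsequence equivalent to an equidistributed disjointly supported sequence $(\yy_j)$ with common shape $\yy=(a_\ell)$, for which $\|\sum_{j=1}^n\yy_j\|_{\ww,1}=\sum_\ell a_\ell(s_{\ell n}-s_{(\ell-1)n})\ge a_1 s_n$ with $a_1>0$. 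The passage from this subsequential statement to the full democracy bound I plan to handle by contradiction: assuming the existence of finite sets $A_n\subset\NN$ with $|A_n|=n$ and $\|\sum_{k\in A_n}\xx_k\|\ll s_n$, a disjointification using the contraction principle of $\BB$ (losing only a constant factor on sizes and sum-norms) combined with the doubling of $\sss$ (so $s_n\lesssim n^{\alpha}$ for some $\alpha$) should yield an infinite subsequence of $\BB$ along which the $\ell_1$-alternative of Lemma~\ref{lem: uncbasicseqfact}(a) is ruled out as above, yet any equidistributed sub-subsequence carries $\varphi_l\gtrsim s_L$ into a regime where the triangle-inequality bound from the sets $A_n$ gives an incompatible upper estimate for large $L$. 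Making this cancellation rigorous is the most delicate step of the program.

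Finally, once $\BB$ has been shown to be unconditional and democratic with $\varphi[\BB]\approx s_n\approx\varphi[\EE]$, it is greedy, and Lemma~\ref{DemocracyEmbedding} furnishes the sandwich $\|(a_k)\|_{\ww,\infty}\lesssim \|\sum_k a_k\xx_k\|\lesssim\|(a_k)\|_{\ww,1}$. To upgrade the lower bound from the weak Lorentz norm $\ww,\infty$ to the full norm $\ww,1$ and thereby conclude $\BB\sim\EE$, I would exploit that $T$ is a bounded linear operator on $d_{\ww,1}$ with dense range and apply the Banach open mapping theorem to the dual operator $T^*\colon m_\sss\to m_\sss$, which satisfies $T^*\xx_k^*=\ee_k^*$ and is injective because $\BB$ spans $d_{\ww,1}$; together with the weak-type lower bound on $T$, this forces $T$ to be an isomorphism, giving $\BB\sim\EE$ as desired.
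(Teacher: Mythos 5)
Your upper estimate is correct and is exactly what the paper extracts from Lemma~\ref{DemocracyEmbedding}(b). The lower estimate, however, contains two genuine gaps. First, your reduction to the democracy bound $\varphi_l[\BB,d_{\ww,1}](n)\gtrsim s_n$ is only a plan by contradiction whose ``most delicate step'' you do not carry out. Second, and more fundamentally, even a complete proof of that democracy bound would not finish the argument by the route you propose: unconditionality plus democracy with fundamental function $\approx s_n$ only yields, via Lemma~\ref{DemocracyEmbedding}, the sandwich $\Vert(a_k)\Vert_{\ww,\infty}\lesssim\Vert\sum_k a_k\xx_k\Vert\lesssim\Vert(a_k)\Vert_{\ww,1}$, and in general this does not pin down the basis --- recall that $\ell_p$ carries a wealth of non-equivalent greedy bases all with fundamental function $n^{1/p}$ (\cite{DHK2006}, cited in the introduction), so ``greedy with the right fundamental function'' does not imply ``equivalent to the unit vector basis.'' Your proposed upgrade via the open mapping theorem cannot close this gap: boundedness of $T$ with dense range gives only that $T^*$ is injective, and an injective bounded operator with dense range need not be an isomorphism; the weak-type lower bound $\Vert T((a_k))\Vert\gtrsim\Vert(a_k)\Vert_{\ww,\infty}$ is a lower bound in a strictly weaker norm and does not bound $T$ below on $d_{\ww,1}$. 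As written, the final step is circular, since $T$ being an isomorphism is precisely the equivalence $\BB\sim\EE$ to be proved.

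The paper's proof supplies the missing idea, which is coordinate-wise rather than democratic. One first shows $\delta:=\inf_k\Vert\xx_k\Vert_\infty>0$ (otherwise a subsequence of $\BB$ would be equivalent to the unit vector basis of $\ell_1$ by \cite{LinTza1977}*{Prop.\ 4.e.3}, contradicting \eqref{assumptionondemocracyf} since $s_n=o(n)$ --- the same exclusion you use). Next, no single coordinate $n$ can satisfy $|x_{k,n}|\ge\delta$ for more than $N$ values of $k$: a suitable choice of signs would give $\Vert\sum_{j=1}^N\varepsilon_j\xx_{k_j}\Vert_{\ww,1}\ge N\delta$, again contradicting \eqref{assumptionondemocracyf}. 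This produces a partition of $\NN$ into $N$ sets on each of which a map $k\mapsto n(k)$ is injective with $|x_{k,n(k)}|\ge\delta$; then the square-function estimate coming from unconditionality, together with the symmetry of the unit vector basis of $d_{\ww,1}$, gives directly the strong-type lower bound $\Vert\sum_k a_k\xx_k\Vert_{\ww,1}\gtrsim\Vert\sum_k a_k\ee_k\Vert_{\ww,1}$. This injection of ``large coordinates'' is the step your argument is missing; the fundamental function alone will not get you there.
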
 
\begin{proof} Let $\BB=(\xx_k)_{k=1}^\infty$ and $\xx_k =(x_{k,n})_{n=1}^\infty$. Then  $$\delta:=\inf_{k\in\NN}
\| \xx_k \|_\infty >0.$$
Indeed, if not, by \cite[Prop. 4.e.3]{LinTza1977}, $\BB$ would have a subsequence equivalent to the unit vector basis of $\ell_1$, contradicting the assumption that \eqref{assumptionondemocracyf} holds.

Next, we claim that there exists $N \in \NN$ such that, for each $n\in\NN$, $|x_{k,n}| \ge \delta$ for at most $N$ values of $k$. 
 Suppose that this is not the case. Then, for every $N \ge 1$ there exist $n := n(N)$ and $k_1<k_2<\dots<k_N$ such that
$|x_{k_j,n}| \ge \delta$ for $1 \le j \le N$.  Hence there exists a choice of signs $\varepsilon_j = \pm 1$ ($1 \le j \le N$) such that
$$\left\|\sum_{j=1}^N \varepsilon_j \xx_{k_j}\right\|_{\ww,1} \ge \sum_{j=1}^N |x_{k_j,n}| \ge N\delta,$$
contradicting  again the assumption \eqref{assumptionondemocracyf} on $\varphi_u[\BB,d_{\ww,1}]$.

Hence, there is a partition $(B_j)_{j=1}^N$ of $\NN$  such that for each $k \in \NN$ there exists $n=n(k)$ such that
$|x_{k,n(k)}| \ge \delta$ and the map $k \mapsto n(k)$ is one-one on each  $B_j$.  

Now we estimate from below the norm of any element  $\sum_{n=1}^\infty a_k \xx_k \in d_{\ww,1}$.  By the unconditionality of $\BB$
(see  \cite{LT1979}*{Theorem 1.d.6}),
\[
\left \|\sum_{k=1}^\infty a_k \xx_k \right\|_{\ww,1} \approx \sum_{j=1}^N \left\|\sum_{k\in B_j}^\infty a_k \xx_k\right\|_{\ww,1}
\approx \sum_{j=1}^N  \left\| \left( \sum_{k \in B_j} |a_k|^2 |\xx_k|^2\right)^{1/2}\right\|_{\ww,1}.
\]
Then, using the estimate $|x_{k,n(k)}| \ge \delta$ and the symmetry of the unit vector basis of $d_{\ww,1}$,
\begin{align*}
\left \|\sum_{k=1}^\infty a_k \xx_k \right\|_{\ww,1}
& \gtrsim \sum_{j=1}^N
\left\| \sum_{k \in B_j} |a_k| \ee_{n(k)}\right\|_{\ww,1}	\\
&= \sum_{j=1}^N\left\| \sum_{k \in B_j} a_k \ee_k\right\|_{\ww,1}\\
& \approx \left\|\sum_{k=1}^\infty a_k \ee_k\right\|_{\ww,1}.
\end{align*}
The upper estimate follows from Lemma~\ref{DemocracyEmbedding}(b).
\end{proof}
\begin{theorem} Suppose that $\ww$ is a regular weight decreasing to zero such that its primitive weight $\sss$ is submultiplicative.   Then $m^0_\sss$ has a unique greedy basis.  \end{theorem}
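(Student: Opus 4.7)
Let $\BB=(\xx_k)_{k=1}^\infty$ be a semi-normalized greedy basis of $m^0_\sss$ and let $\EE=(\ee_k)_{k=1}^\infty$ denote the unit vector basis. Since $(m^0_\sss)^*=d_{\ww,1}$ is separable, $m^0_\sss$ contains no copy of $\ell_1$, so the unconditional basis $\BB$ is shrinking and its biorthogonal sequence $\BB^*=(\xx_k^*)_{k=1}^\infty$ is a semi-normalized unconditional basis of $d_{\ww,1}$. The strategy is to verify the hypothesis $\varphi_u[\BB^*,d_{\ww,1}](n)\lesssim s_n$ of Proposition~\ref{prop: uncbasisfact}; that proposition then gives $\BB^*$ equivalent to the unit vector basis of $d_{\ww,1}$, and a standard duality argument (based on $d_{\ww,1}^*=m_\sss$ and the unconditionality of $\BB^*$) will finally yield $\BB$ equivalent to $\EE$.

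Apply Lemma~\ref{lem: uncbasicseqfact}(b) (using the submultiplicativity of $\sss$) to produce a subsequence $(\xx_{k_j}^*)_{j=1}^\infty$ of $\BB^*$ equivalent either to the unit vector basis of $\ell_1$ or to that of $d_{\ww,1}$. I would exclude the $\ell_1$ alternative as follows: were it to hold, the duality pairing would yield, for each $n$, a unit vector $y=\sum_k b_k\xx_k\in m^0_\sss$ with $\sum_{j=1}^n|b_{k_j}|\gtrsim n$, while the standard greedy-basis inequality $b_N^*\lesssim \|y\|/\varphi_l[\BB,m^0_\sss](N)$ (with $(b_N^*)$ the decreasing rearrangement of $(|b_k|)$) forces $\sum_{N=1}^n 1/\varphi_l[\BB,m^0_\sss](N)\gtrsim n$; as $\varphi_l[\BB,m^0_\sss]$ is nondecreasing, it would then be bounded, and the democracy of $\BB$ would make $\BB$ equivalent to the unit vector basis of $c_0$, contradicting the unboundedness of the fundamental function $\varphi_u[\EE,m^0_\sss](n)\approx n/s_n$ of $m^0_\sss$. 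Therefore $(\xx_{k_j}^*)$ is equivalent to the unit vector basis of $d_{\ww,1}$, and a duality computation (restricting the test functionals in the dual formula for $\|\sum_j a_j\xx_{k_j}\|_{m^0_\sss}$ to indices in $\{k_j\}$ via the unconditionality of $\BB^*$) shows that $(\xx_{k_j})$ is equivalent to $\EE$ in $m^0_\sss$. Combining $\varphi_l[\BB]\le\varphi_l[(\xx_{k_j})]\approx n/s_n$ and $\varphi_u[\BB]\ge\varphi_u[(\xx_{k_j})]\approx n/s_n$ with the democracy of $\BB$ pins down $\varphi_l[\BB,m^0_\sss](n)\approx\varphi_u[\BB,m^0_\sss](n)\approx n/s_n$.

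Feeding this estimate back into the bound $\varphi_u[\BB^*,d_{\ww,1}](n)\lesssim\sum_{i=1}^n 1/\varphi_l[\BB,m^0_\sss](i)$ that follows again from the greedy-basis inequality, and invoking the regularity hypothesis $s_i/i\lesssim w_i$, one obtains
\[
\varphi_u[\BB^*,d_{\ww,1}](n)\lesssim\sum_{i=1}^n\frac{s_i}{i}\lesssim\sum_{i=1}^n w_i=s_n,
\]
so Proposition~\ref{prop: uncbasisfact} applies and produces the equivalence of $\BB^*$ to the unit vector basis of $d_{\ww,1}$; the same duality calculation used on the subsequence then upgrades to give $\BB$ equivalent to $\EE$. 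I expect the main obstacle to be the rule-out step, in which the qualitative existence of an $\ell_1$-subsequence of $\BB^*$ must be converted into the quantitative Cesaro-type bound $\sum 1/\varphi_l[\BB,m^0_\sss](N)\gtrsim n$ using the greedy-basis inequality, and a genuine contradiction extracted from it by combining the democracy of $\BB$ with the fact that the fundamental function of $m^0_\sss$ is unbounded (so $m^0_\sss\not\approx c_0$).
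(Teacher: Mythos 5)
Your proof is correct and follows essentially the same route as the paper: pass to the dual basis $\BB^*$ in $d_{\ww,1}$, invoke Lemma~\ref{lem: uncbasicseqfact}(b), rule out the $\ell_1$ alternative by showing it would force $m^0_\sss\approx c_0$, pin down $\varphi_u[\BB]\approx\varphi_l[\BB]\approx n/s_n$, use regularity to verify the hypothesis of Proposition~\ref{prop: uncbasisfact}, and dualize back. The only (valid) deviations are technical: you rule out $\ell_1$ via the coefficient estimate $b_N^*\lesssim\|y\|/\varphi_l[\BB](N)$ and a Ces\`aro argument where the paper simply dualizes the $\ell_1$-subsequence of $\BB^*$ into a $c_0$-subsequence of $\BB$, and you obtain $\varphi_u[\BB^*](n)\lesssim s_n$ from the discrete inequality $\varphi_u[\BB^*](n)\lesssim\sum_{i\le n}1/\varphi_l[\BB](i)$ where the paper routes the same estimate through \eqref{eq: weaknorms} and Lemma~\ref{DemocracyEmbedding}.
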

\begin{proof}

Let $\BB=(\xx_n)_{n=1}^\infty$ be a greedy basis for $m^0_\sss$ and let $\BB^*=(\xx_n^*)_{n=1}^\infty$ be its biorthogonal basic sequence   in  $d_{\ww,1}$. Since $m^0_\sss$ has a separable dual it follows from the unconditionality of the basis
 that  $\BB$ is shrinking, hence $\BB^*$ is a (semi-normalized and unconditional) basis of $d_{\ww,1}$.

Note that  $\BB^*$ does not contain any subsequence equivalent to the unit vector basis of  $\ell_1$ since,  otherwise, by duality, $\BB$ would contain a  subsequence equivalent to the unit vector basis of $c_0$. This in turn would imply  that
$\varphi_u[\BB,m_\sss^0]$ is bounded, which would yield that $\BB$ is equivalent to the unit vector basis of $c_0$. We would infer that $m^0_s$ is isomorphic to $c_0$, which is clearly false since the unit vector basis of  $m^0_s$ is not equivalent to the
unit vector basis of $c_0$,
and $c_0$ has a unique symmetric basis.

Therefore, by Lemma~\ref{lem: uncbasicseqfact}(b), $\BB^*$ has a subsequence equivalent to the unit vector basis of $d_{\ww,1}$. By duality
this implies that the corresponding subsequence of $\BB$ is equivalent to the unit vector basis of $m^0_\sss$. Using the regularity of the weight,  
\[
\varphi_l[\BB,m_\sss^0](N)\approx
\varphi_u[\BB,m_\sss^0](N)\approx \left\Vert \sum_{n=1}^N \ee_n\right\Vert = \frac{N}{s_N} \gtrsim \frac{1}{w_N},
\]
for all $N$.

Now,  \eqref{eq: weaknorms} and Lemma~\ref{DemocracyEmbedding}(a), allow us  to estimate from below the norm of any element  
 $\sum_{k=1}^\infty a_k \xx_k \in m^0_s$. We have
\[
 \left \|\sum_{k=1}^\infty a_k \xx_k \right \|_{m_\sss} 
\gtrsim  \sup_{k\in\NN}  \frac{1}{w_k} a^*_k
\ge  
\left\Vert   (a_k)_{k=1}^\infty\right\Vert_{m_\sss}.
\]
By duality, for all $(a_n)_{n=1}^\infty \in c_{00}$ we have
$$\left\|\sum_{k=1}^\infty a_k \xx^*_k\right\|_{\ww,1} \lesssim \|\ (a_k)_{k=1}^\infty   \|_{\ww,1}.$$ 
Appealing to  Lemma~\ref{DemocracyEmbedding}(b) we obtain that 
\[
\varphi_u[\BB^*, d_{\ww,1}] (n)\lesssim s_n,\quad \forall n,\] which implies 
by Proposition~\ref{prop: uncbasisfact} that $\BB^* $ is equivalent to the unit vector basis of $d_{\ww,1}$. Hence, by duality, $\BB$ is equivalent to the unit vector basis of $m^0_\sss$.
\end{proof} \begin{corollary} The separable part of weak-$\ell_p$ has a unique greedy basis. \end{corollary}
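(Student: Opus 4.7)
The plan is to reduce the corollary to a direct application of the preceding theorem by verifying that the weight associated with the weak-$\ell_{p}$ space satisfies the three hypotheses there (decreasing to zero, regular, with submultiplicative primitive weight). Recall from the text that weak-$\ell_{p}$ is identified with $m_{\sss}$ for the primitive weight $\sss=(s_n)_{n=1}^\infty$ with $s_n=n^{1/p}$, coming from the weight $w_n=n^{1/p}-(n-1)^{1/p}$ (with the convention $0^{1/p}=0$). Thus its separable part is $m_{\sss}^{0}$ with this specific $\sss$, and the task is simply to check that the previous theorem applies.

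First I would check the elementary structural conditions: since $1<p<\infty$, the function $t\mapsto t^{1/p}$ is strictly concave, so its consecutive differences $w_n$ form a strictly decreasing sequence tending to $0$, and $s_n=n^{1/p}\uparrow\infty$. Submultiplicativity of $\sss$ is immediate and even with constant $1$, since
\[
s_{nk}=(nk)^{1/p}=n^{1/p}k^{1/p}=s_n s_k,\qquad n,k\in\NN,
\]
so $\sup_{n,k}s_{nk}/(s_n s_k)=1<\infty$.

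Next I would verify regularity, i.e., $s_n/n\lesssim w_n$. By the mean value theorem applied to $t\mapsto t^{1/p}$ on $[n-1,n]$, there is $\xi_n\in(n-1,n)$ with
\[
w_n=n^{1/p}-(n-1)^{1/p}=\frac{1}{p}\,\xi_n^{1/p-1}\ge \frac{1}{p}\,n^{1/p-1}=\frac{1}{p}\cdot\frac{s_n}{n},
\]
since $1/p-1<0$ makes $t\mapsto t^{1/p-1}$ decreasing. Hence $s_n/n\le p\, w_n$ for all $n$, which is the regularity condition. (As a sanity check one may also note $w_1=1=s_1/1$.)

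With these three properties in hand, the preceding theorem applies verbatim to $m_{\sss}^{0}$ with $\sss=(n^{1/p})_{n=1}^\infty$, yielding that the separable part of weak-$\ell_{p}$ has a unique greedy basis. I do not anticipate any obstacle beyond these routine verifications, as all the real work (the duality argument, the use of Lemma \ref{lem: uncbasicseqfact}, and Proposition \ref{prop: uncbasisfact}) has been done in the theorem itself; this corollary is merely its specialization to the classical weak-$\ell_{p}$ scale.
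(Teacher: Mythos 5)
Your proposal is correct and is precisely the argument the paper intends: the corollary is stated as an immediate specialization of the preceding theorem, the paper having already noted in this section that $w_n=n^{1/p}-(n-1)^{1/p}$ is regular and that $(n^{1/p})_{n=1}^{\infty}$ is submultiplicative. Your verifications of these hypotheses (concavity for monotonicity, exact multiplicativity of $n^{1/p}$, and the mean value theorem for regularity) are all sound, so nothing further is needed.
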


\subsection*{Acknowledgements} F. Albiac and J. L. Ansorena acknowledge the support of Spanish Research Grant MTM2014-53009-P, {\it An\'alisis Vectorial, Multilineal, y Aplicaciones}. The first-named author  was also partially supported by Spanish Research Grant MTM2012-31286, {\it Operators, lattices, and structure of Banach spaces}. S. J. Dilworth was partially supported by the National Science Foundation under Grant Number DMS-1361461. D. Kutzarova has been partially supported by the Bulgarian National
Scientific Fund under Grant DFNI-I02/10.


\begin{bibsection}

\begin{biblist}

\bib{AA2015}{article}{
 author={Albiac, F.},
 author={Ansorena, J. L.},
  title={Lorentz spaces and embeddings induced by almost greedy bases in Banach spaces},
 journal={Constr. Approx.},
 doi={10.1007/s00365-015-9293-3.}
}

\bib{AlbiacKalton2006}{book}{
 author={Albiac, F.},
 author={Kalton, N. J.},
   title={Topics in Banach space theory},
   series={Graduate Texts in Mathematics},
   volume={233},
   publisher={Springer, New York},
   date={2006},
   pages={xii+373},
}

\bib{ACL1973}{article}{
  author={Altshuler, Z.},
  author={Casazza, P. G.},
  author={Lin, Bor Luh},
   title={On symmetric basic sequences in Lorentz sequence spaces},
   journal={Israel J. Math.},
   volume={15},
   date={1973},
   pages={140--155},
}

\bib{BlascoGregori2001}{article}{
   author={Blasco, O.},
   author={Gregori, P.},
   title={Type and cotype in vector-valued Nakano sequence spaces},
   journal={J. Math. Anal. Appl.},
   volume={264},
   date={2001},
   number={2},
   pages={657--672},
 }

\bib{BCLT1985}{article}{
   author={Bourgain, J.},
   author={Casazza, P. G.},
   author={Lindenstrauss, J.},
   author={Tzafriri, L.},
   title={Banach spaces with a unique unconditional basis, up to
   permutation},
   journal={Mem. Amer. Math. Soc.},
   volume={54},
   date={1985},
   number={322},
   pages={iv+111},
}

\bib{Bourgin1943}{article}{
   author={Bourgin, D. G.},
   title={Linear topological spaces},
   journal={Amer. J. Math.},
   volume={65},
   date={1943},
   pages={637--659},
}

\bib{CasazzaKalton1996}{article}{
   author={Casazza, P. G.},
   author={Kalton, N. J.},
   title={Unconditional bases and unconditional finite-dimensional
   decompositions in Banach spaces},
   journal={Israel J. Math.},
   volume={95},
   date={1996},
   pages={349--373},
}

\bib{CasazzaKalton1998}{article}{
   author={Casazza, P. G.},
   author={Kalton, N. J.},
   title={Uniqueness of unconditional bases in Banach spaces},
   journal={Israel J. Math.},
   volume={103},
   date={1998},
   pages={141--175},
}

\bib{DFOS2011}{article}{
   author={Dilworth, S. J.},
   author={Freeman, D.},
   author={Odell, E.},
   author={Schlumprecht, T.},
   title={Greedy bases for Besov spaces},
   journal={Constr. Approx.},
   volume={34},
   date={2011},
   number={2},
   pages={281--296},
}

\bib{DHK2006}{article}{
   author={Dilworth, S. J.},
   author={Hoffmann, M.},
   author={Kutzarova, D. N.},
   title={Non-equivalent greedy and almost greedy bases in $l_p$},
   journal={J. Funct. Spaces Appl.},
   volume={4},
   date={2006},
   number={1},
   pages={25--42},
}

\bib{DKK2003}{article}{
 author={Dilworth, S. J.},
 author={Kalton, N. J.},
 author={Kutzarova, D.},
 title={On the existence of almost greedy bases in Banach spaces},
 note={Dedicated to Professor Aleksander Pe\l czy\'nski on the occasion of
 his 70th birthday},
 journal={Studia Math.},
 volume={159},
 date={2003},
 number={1},
 pages={67--101},
}

\bib{DKKT2003}{article}{
   author={Dilworth, S. J.},
   author={Kalton, N. J.},
   author={Kutzarova, D.},
   author={Temlyakov, V. N.},
   title={The thresholding greedy algorithm, greedy bases, and duality},
   journal={Constr. Approx.},
   volume={19},
   date={2003},
   number={4},
   pages={575--597},
}

\bib{EdelWoj1973}{article}{
   author={{\`E}del{\cprime}{\v{s}}te{\u\i}n, I. S.},
   author={Wojtaszczyk, P.},
   title={On projections and unconditional bases in direct sums of Banach
   spaces},
   journal={Studia Math.},
   volume={56},
   date={1976},
   number={3},
   pages={263--276},
}

\bib{Hennefeld1973}{article}{
   author={Hennefeld, J.},
   title={On nonequivalent normalized unconditional bases for Banach spaces},
   journal={Proc. Amer. Math. Soc.},
   volume={41},
   date={1973},
   pages={156--158},
}

\bib{James1950}{article}{
   author={James, R. C.},
   title={Bases and reflexivity of Banach spaces},
   journal={Ann. of Math. (2)},
   volume={52},
   date={1950},
   pages={518--527},
}

\bib{KadPel1962}{article}{
   author={Kadec, M. I.},
   author={Pe{\l}czy{\'n}ski, A.},
   title={Bases, lacunary sequences and complemented subspaces in the spaces
   $L_{p}$},
   journal={Studia Math.},
   volume={21},
   date={1961/1962},
   pages={161--176},
}

 \bib{KoTe}{article}{
 author={Konyagin, S. V.},
 author={Temlyakov, V. N.},
 title={A remark on greedy approximation in Banach spaces},
 journal={East J. Approx.},
 volume={5},
 date={1999},
 number={3},
 pages={365--379},
}

\bib{KotheToeplitz1934}{article}{
  author={K\"othe, G.},
  author={Toeplitz, O.},
  title={Lineare Raume mit unendlich vielen Koordinaten und Ringen unendlicher Matrizen},
  journal={J. Reine Angew Math.},
  volume={171},
  date={1934},
  pages={193\ndash 226},
}

\bib{LindenstraussPel1968}{article}{
   author={Lindenstrauss, J.},
   author={Pe{\l}czy{\'n}ski, A.},
   title={Absolutely summing operators in $L_{p}$-spaces and their
   applications},
   journal={Studia Math.},
   volume={29},
   date={1968},
   pages={275--326},
}

\bib{LindTzaf1971a}{article}{
   author={Lindenstrauss, J.},
   author={Tzafriri, L.},
   title={On Orlicz sequence spaces},
   journal={Israel J. Math.},
   volume={10},
   date={1971},
   pages={379--390},
}

\bib{LinTza1977}{book}{
   author={Lindenstrauss, J.},
   author={Tzafriri, L.},
   title={Classical Banach spaces. I},
   note={Sequence spaces;
   Ergebnisse der Mathematik und ihrer Grenzgebiete, Vol. 92},
   publisher={Springer-Verlag, Berlin-New York},
   date={1977},
   pages={xiii+188},
}

\bib{LT1979}{book}{
     author={Lindenstrauus, J.},
     author={Tzafriri, L.},
   title={Classical Banach spaces. II},
   series={Ergebnisse der Mathematik und ihrer Grenzgebiete [Results in
   Mathematics and Related Areas]},
   volume={97},
   note={Function spaces},
   publisher={Springer-Verlag, Berlin-New York},
   date={1979},
   pages={x+243},
}


\bib{LindenstraussZippin1969}{article}{
   author={Lindenstrauss, J.},
   author={Zippin, M.},
   title={Banach spaces with a unique unconditional basis},
   journal={J. Functional Analysis},
   volume={3},
   date={1969},
   pages={115--125},
}

\bib{Musielak1983}{book}{
   author={Musielak, J.},
   title={Orlicz spaces and modular spaces},
   series={Lecture Notes in Mathematics},
   volume={1034},
   publisher={Springer-Verlag, Berlin},
   date={1983},
   pages={iii+222},
}

\bib{Nakano1950}{book}{
   author={Nakano, H.},
   title={Modulared Semi-Ordered Linear Spaces},
   publisher={Maruzen Co., Ltd., Tokyo},
   date={1950},
   pages={i+288},
}

\bib{Nakano1951}{article}{
   author={Nakano, H.},
   title={Modulared sequence spaces},
   journal={Proc. Japan Acad.},
   volume={27},
   date={1951},
   pages={508--512},
}

\bib{Pel1960}{article}{
   author={Pe{\l}czy{\'n}ski, A.},
   title={Projections in certain Banach spaces},
   journal={Studia Math.},
   volume={19},
   date={1960},
   pages={209--228},
}

\bib{PelcynskiSinger1964}{article}{
   author={Pe{\l}czy{\'n}ski, A.},
   author={Singer, I.},
   title={On non-equivalent bases and conditional bases in Banach spaces},
   journal={Studia Math.},
   volume={25},
   date={1964/1965},
   pages={5--25},
}

\bib{Simmons1965}{article}{
   author={Simons, S.},
   title={The sequence spaces $l(p_{\nu })$ and $m(p_{\nu })$},
   journal={Proc. London Math. Soc. (3)},
   volume={15},
   date={1965},
   pages={422--436},
}

\bib{Pietsch2007}{book}{
   author={Pietsch, A.},
   title={History of Banach spaces and linear operators},
   publisher={Birkh\"auser Boston, Inc., Boston, MA},
   date={2007},
   pages={xxiv+855},
}

\bib{Schectmann2014}{article}{
   author={Schechtman, G.},
   title={No greedy bases for matrix spaces with mixed $\ell_p$ and
   $\ell_q$ norms},
   journal={J. Approx. Theory},
   volume={184},
   date={2014},
   pages={100--110},
}

\bib{Smela2007}{article}{
   author={Smela, K.},
   title={Subsequences of the Haar basis consisting of full levels in $H_p$ for $0<p<\infty$},
   journal={Proc. Amer. Math. Soc.},
   volume={135},
   date={2007},
   number={6},
   pages={1709--1716},
   issn={0002-9939},
}

\bib{Temlyakov1998}{article}{
   author={Temlyakov, V. N.},
   title={The best $m$-term approximation and greedy algorithms},
   journal={Adv. Comput. Math.},
   volume={8},
   date={1998},
   number={3},
   pages={249--265},
}

\end{biblist}
\end{bibsection}

\end{document}